\DeclareMathOperator{\Ran}{Ran}
\DeclareMathOperator{\Ker}{Ker}
\DeclareMathOperator{\Tr}{Tr}
\DeclareMathOperator{\sign}{sign}
\DeclareMathOperator{\const}{const}
\DeclareMathOperator{\Dom}{Dom}
\DeclareMathOperator{\Span}{Span}
\renewcommand\Re{\hbox{{\rm Re}}\,}
\newcommand{\abs}[1]{\lvert#1\rvert}
\newcommand{\Abs}[1]{\left\lvert#1\right\rvert}
\newcommand{\norm}[1]{\lVert#1\rVert}
\newcommand{\jap}[1]{\langle#1\rangle}
\newcommand{\bbR}{{\mathbb R}}
\newcommand{\bbZ}{{\mathbb Z}}
\newcommand{\R}{\mathbb{R}}
\newcommand{\Sch}{{\mathbf S}}
\newcommand{\calH}{{\mathcal H}}
\numberwithin{equation}{section}
\theoremstyle{plain}
\newtheorem{theorem}{\bf Theorem}[section]
\newtheorem*{theorem*}{Theorem 1.1$'$}
\newtheorem{lemma}[theorem]{\bf Lemma}
\newtheorem{proposition}[theorem]{\bf Proposition}
\newtheorem{open problem}[theorem]{\bf Open problem}
\theoremstyle{remark}
\newtheorem*{remark*}{\bf Remark}
\newcommand{\wt}{\widetilde}
\newcommand{\eps}{\varepsilon}
\newcommand{\Z}{\mathbb{Z}}
\begin{document}

\title[Eigenvalue clusters for hemisphere]{Eigenvalue clusters for the hemisphere Laplacian with variable Robin condition}

\author{Alexander Pushnitski}
\address{Department of Mathematics, King's College London, Strand, London, WC2R~2LS, U.K.}
\email{alexander.pushnitski@kcl.ac.uk}

\author{Igor Wigman}
\address{Department of Mathematics, King's College London, Strand, London, WC2R~2LS, U.K.}
\email{igor.wigman@kcl.ac.uk}

\begin{abstract}
We study the eigenvalue clusters of the Robin Laplacian on the 2-dimensional hemisphere with a variable Robin coefficient on the equator. The $\ell$'th cluster has $\ell+1$ eigenvalues.  We determine the asymptotic density of eigenvalues in the $\ell$'th cluster as $\ell$ tends to infinity. This density is given by an explicit integral involving the even part of the Robin coefficient.
\end{abstract}

\keywords{Robin boundary conditions, Robin-Neumann gaps, hemisphere, spherical harmonics, eigenvalue clusters, variable Robin parameter}

\subjclass[2020]{35G15}

\maketitle

%%%%%%%%%%%%%%%%%%%%%%%%%%%%%%%%%%%%%%%%%%%%%%%%%
%%%%%%%%%%%%%%%%%%%%%%%%%%%%%%%%%%%%%%%%%%%%%%%%%
\section{Introduction and main result}\label{sec.a}
%%%%%%%%%%%%%%%%%%%%%%%%%%%%%%%%%%%%%%%%%%%%%%%%%
%%%%%%%%%%%%%%%%%%%%%%%%%%%%%%%%%%%%%%%%%%%%%%%%%

\subsection{The Robin eigenvalues}
\label{sec:setup basic}
Let $\Omega$ be the two-dimensional upper unit hemisphere with its boundary $\partial \Omega$
that is the sphere equator.
As usual, we parameterise $\Omega$ by the spherical coordinates $\theta, \varphi$, where $\theta\in[0,\pi/2]$ is the polar angle (with the North pole corresponding to $\theta=0$, and the equator corresponding to $\theta=\pi/2$) and $\varphi\in(-\pi,\pi]$ is the azimuthal angle. Denote by $\Delta$ the Laplace-Beltrami operator on $\Omega$, which may be expressed in the spherical coordinates as
$$
\Delta=\frac1{\sin\theta}\frac{\partial}{\partial \theta}\sin\theta\frac{\partial}{\partial \theta}+\frac1{\sin^2\theta}\frac{\partial^2}{\partial\varphi^2}\ ,
$$
and let $\sigma=\sigma(\varphi)$ be a $C^1$-smooth real-valued function on the equator. We are interested in the eigenvalues $\lambda=\lambda(\sigma)$ of the \emph{Robin problem}
$$
-\Delta u=\lambda u \text{ on $\Omega$}, \qquad \frac{\partial u}{\partial n}+\sigma u=0 \text{ on $\partial\Omega$,}
$$
where $\frac{\partial u}{\partial n}=\frac{\partial u}{\partial \theta}$ is the normal derivative at the boundary.
We will call these the \emph{Robin eigenvalues} for short and enumerate them as
$$
\lambda_1(\sigma)\leq \lambda_2(\sigma)\leq\cdots
$$
listing them repeatedly in case of multiplicity. We recall that $\lambda_n(\sigma)\to\infty$ as $n\to\infty$.

The case $\sigma=0$ corresponds to the Neumann problem; this case allows for separation of variables. The corresponding eigenfunctions are spherical harmonics (those spherical harmonics that satisfy the Neumann boundary condition, see Section~\ref{sec.b2} below), and the eigenvalues are $\ell(\ell+1)$, $\ell\geq0$, of multiplicities $\ell+1$.
The case $\sigma=\text{const}$ also allows for separation of variables, and was considered by Rudnick and Wigman \cite{RW} (with an additional restriction $\sigma>0$).
Our first preliminary result is that the Robin eigenvalues form clusters of the size $O(\sqrt{\ell})$ around the Neumann eigenvalues $\ell(\ell+1)$.

\begin{theorem}\label{thm:a1}
There exists a constant $C=C(\sigma)>0$ such that all Robin eigenvalues belong to the union of intervals
$$
\bigcup\limits_{\ell=0}^{\infty} \Lambda_\ell, \qquad
\Lambda_\ell=\bigl(\ell(\ell+1)-C\sqrt{\ell+1},\ell(\ell+1)+C\sqrt{\ell+1}\bigr).
$$
Moreover, for all sufficiently large $\ell$, the total number of eigenvalues (counting multiplicities) in $\cup_{k=0}^{\ell-1}\Lambda_k$ is
\[
L:=\ell(\ell+1)/2,
\]
and there are exactly $\ell+1$ eigenvalues in $\Lambda_\ell$. Thus, for all sufficiently large $\ell$, the eigenvalues in $\Lambda_\ell$ are $\lambda_{L+k}(\sigma)$ with $k=1,\dots,\ell+1$.
\end{theorem}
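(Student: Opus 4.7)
The plan is to combine a sharp $L^2$-trace bound for spherical harmonics on the equator with a resolvent perturbation argument, and then to obtain the eigenvalue count per cluster by continuous deformation in $\sigma$. For each $\ell\ge 0$ denote by $\calH_\ell\subset L^2(\Omega)$ the $(\ell+1)$-dimensional Neumann eigenspace at level $\ell(\ell+1)$: this is the span of the spherical harmonics $Y_\ell^m$ with $\ell-m$ even (those that satisfy $\partial_\theta u|_{\theta=\pi/2}=0$). The crucial ingredient I would establish first is the sharp trace estimate
\[
 \int_{\partial\Omega}|u|^2\,d\varphi\le C\sqrt{\ell+1}\,\|u\|_{L^2(\Omega)}^2\qquad(u\in\calH_\ell).
\]
On the sphere this is a Sogge--Burq--G\'erard--Tzvetkov $L^2$-restriction bound for joint eigenfunctions along a closed geodesic, and it can be verified concretely from the known values and asymptotics of the associated Legendre functions $P_\ell^m(0)$ together with the orthogonality of the Fourier modes in $\varphi$.

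Next, write $q_\sigma=q_0+b_\sigma$ with $b_\sigma[u]=\int_{\partial\Omega}\sigma|u|^2$, and decompose $L^2(\Omega)=\bigoplus_\ell\calH_\ell$. By the trace bound, the diagonal blocks of $b_\sigma$ have norm at most $C\|\sigma\|_\infty\sqrt{\ell+1}$ on $\calH_\ell$; the off-diagonal blocks between $\calH_k$ and $\calH_j$ are controlled by $C\|\sigma\|_\infty(k+1)^{1/4}(j+1)^{1/4}$, and moreover they vanish unless $k\equiv j\pmod 2$, since the restriction $u|_{\partial\Omega}$ for $u\in\calH_k$ only contains Fourier modes $e^{im\varphi}$ with $m\equiv k\pmod 2$. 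Combined with the Neumann spectral gap $|k(k+1)-\ell(\ell+1)|\ge c\,\ell\,|k-\ell|$, a Schur/Neumann-series argument applied to
\[
 (H_\sigma-\mu)^{-1}=(H_0-\mu)^{-1}\bigl(I+b_\sigma(H_0-\mu)^{-1}\bigr)^{-1}
\]
shows that $H_\sigma-\mu$ is invertible for every $\mu$ in the ``gap'' $(\ell(\ell+1)+C\sqrt{\ell+1},\,(\ell+1)(\ell+2)-C\sqrt{\ell+2})$, provided $C$ is chosen large enough in terms of $\|\sigma\|_\infty$. This proves the first assertion of the theorem.

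For the counting, I would use that the family $(H_{t\sigma})_{t\in[0,1]}$ is a norm-resolvent analytic family, so each $t\mapsto\lambda_n(t\sigma)$ is continuous, and the cluster localisation of the previous paragraph holds uniformly in $t$ with a constant depending only on $\|\sigma\|_\infty$. For $\ell$ sufficiently large the intervals $\Lambda_\ell$ are pairwise disjoint, hence no eigenvalue can migrate between them as $t$ varies. At $t=0$ the multiplicity of the Neumann eigenvalue $\ell(\ell+1)$ is exactly $\ell+1$ and the total number of eigenvalues below $\ell(\ell+1)$ equals $\sum_{k=0}^{\ell-1}(k+1)=\ell(\ell+1)/2=L$; both counts persist at $t=1$, which is precisely the final assertion.

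The main obstacle is the resolvent step: the off-diagonal blocks of $b_\sigma$ are not small in operator norm, and the Neumann series cannot be summed by a naive $\|b_\sigma(H_0-\mu)^{-1}\|<1$ bound. One has to exploit both the parity selection rule and the $\sim\ell$ size of the Neumann gap (which compensates the $(k+1)^{1/4}(j+1)^{1/4}$ growth of the blocks) to obtain convergence. An alternative route would be to prove a sharpened restriction bound directly on the full spectral subspace $\bigoplus_{k\le\ell}\calH_k$, closing the argument via minimax in a single step; however, naive interpolation only yields $O(\ell)$ on that subspace, which is not sharp enough to resolve the $O(\sqrt{\ell+1})$ clusters.
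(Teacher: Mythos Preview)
Your overall strategy---sharp $L^2$ trace bound, a perturbative resolvent argument in the Neumann gap, then a homotopy in $t\sigma$ for the eigenvalue count---is the right shape, and it is close in spirit to the paper. But the resolvent step, which you yourself flag as the main obstacle, does not close in the form you propose.

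First, a small error: the parity selection rule is false for general $\sigma$. For $u\in\calH_k$ the boundary trace contains only Fourier modes $e^{im\varphi}$ with $m\equiv k\pmod 2$, so the $(j,k)$ block of $b_\sigma$ picks out exactly the Fourier coefficients $\widehat\sigma_n$ with $n\equiv j-k\pmod 2$; for a generic $C^1$ function $\sigma$ these do not vanish. Second, and more seriously, even granting the parity and your block bound $\|b_\sigma^{(j,k)}\|\le C(j+1)^{1/4}(k+1)^{1/4}$, a Schur test cannot make the Neumann series converge. Since $b_\sigma$ is only a form (not a bounded operator), the natural object is the symmetrised $|H_0-\mu|^{-1/2}b_\sigma|H_0-\mu|^{-1/2}$, whose $(j,k)$ block is bounded by $C(j+1)^{1/4}(k+1)^{1/4}\big/\bigl(|j(j+1)-\mu|^{1/2}|k(k+1)-\mu|^{1/2}\bigr)$. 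For $\mu$ in the gap near level $\ell$ the Schur row sum is dominated by
\[
\sum_{k\ne \ell}\frac{(k+1)^{1/4}}{|k(k+1)-\mu|^{1/2}}
\;\gtrsim\;
\sum_{1\le |k-\ell|\le \ell/2}\frac{\ell^{1/4}}{\bigl(\ell\,|k-\ell|\bigr)^{1/2}}
\;\sim\;\ell^{1/4},
\]
which diverges; no choice of Schur weights repairs this, because the $(j+1)^{1/4}$ factor is genuinely present for every $j$. The Cauchy--Schwarz block bound throws away the crucial cancellation/structure in $b_\sigma$.

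The paper fixes this by factoring $b_\sigma=T^*M[\sigma]T$ through the trace map $T:W^1_2(\Omega)\to L^2(\partial\Omega)$ and applying Birman--Schwinger on the boundary: the relevant operator becomes $\sigma_0\,\wt R_\ell^\perp(\lambda)=\sigma_0\sum_{k\ne\ell}\wt{\mathbf P}_k/(k(k+1)-\lambda)$ on $L^2(\partial\Omega)$. Since the $\wt{\mathbf P}_k$ are convolution operators with $\|\wt{\mathbf P}_k\|=\sup_m A_{k,m}^2=O(\sqrt{k})$, one gets the \emph{convergent} bound $\|\wt R_\ell^\perp(\lambda)\|\le C\sum_{k\ne\ell}\sqrt{k}/|k(k+1)-\lambda|=O(\ell^{-1/2}\log\ell)$ uniformly on $[\ell^2,(\ell+1)^2]$. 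Together with a variational sandwich that splits off the $\ell$'th block (bounding $H[\sigma]$ above and below by direct sums $\bigl(\ell(\ell+1)I+V_\ell[\sigma_\pm]\bigr)\oplus H_\ell^\perp[\cdot]$), this gives both the cluster localisation and the exact count $\ell+1$; your homotopy argument for the count is then essentially the same as the paper's. The missing idea in your sketch is precisely this passage to the boundary, which replaces the unsummable $(j+1)^{1/4}(k+1)^{1/4}$ blocks by a single operator-norm sum with weight $\sqrt{k}$.
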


Theorem~\ref{thm:a1} is proved in Section~\ref{sec:proof Lemma a1} below.
We will use the term \emph{$\ell$'th cluster} for the Robin eigenvalues in $\Lambda_\ell$.
The results of \cite{RW} for $\sigma=\text{const}>0$ assert that the estimate $O(\sqrt{\ell})$ on the size of the $\ell$'th cluster is sharp: there are Robin eigenvalues near $\ell(\ell+1)+C\sqrt{\ell}$ for large $\ell$.

\subsection{Main result}
Our aim is to determine the density of Robin eigenvalues in the $\ell$'th cluster. In other words, we are interested in the differences
$$
\lambda_{L+k}(\sigma)-\ell(\ell+1), \quad k=1,\dots,\ell+1
$$
for large $\ell$; these differences are termed the \emph{Robin-Neumann gaps} \cite{RWY}.
We set
$$
\sigma_\text{even}(\varphi)=\frac12(\sigma(\varphi)+\sigma(\varphi+\pi)).
$$
It turns out that, in the high energy limit, the density of Robin eigenvalues depends only on $\sigma_\text{even}$. Our main result is:
%%%%%%%%%%%%%%%%%
\begin{theorem}\label{thm.a2}
%%%%%%%%%%%%%%%%%
Let $f\in C^\infty(\bbR)$ be a compactly supported test function with $f(0)=0$. Then
\[
\lim_{\ell\to\infty}
\frac1{\ell+1}\sum_{k=1}^{\ell+1}f\bigl(\lambda_{L+k}(\sigma)-\ell(\ell+1)\bigr)
=
\frac1{4\pi}
\int_{-\pi}^{\pi}
\int_{-1}^1 f\left(\frac{4\sigma_\text{\rm even}(\varphi)}{\pi\sqrt{1-\xi^2}}\right)d\xi\,  d\varphi,
\]
where $L=\ell(\ell+1)/2$.
\end{theorem}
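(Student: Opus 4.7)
The overall plan is to reduce the problem, via first-order degenerate perturbation theory within each cluster, to the eigenvalue distribution of an explicit sequence of finite-dimensional Hermitian matrices, and then to evaluate the latter via a Szeg\H{o}-type argument. By Theorem~\ref{thm:a1}, the $\ell$-th Robin cluster has exactly $\ell+1$ eigenvalues, matching the dimension of the Neumann eigenspace $E_\ell$ of the eigenvalue $\ell(\ell+1)$; this eigenspace is spanned by the orthonormalized spherical harmonics $u_m := \sqrt{2}\, Y_\ell^m$ with $m$ of the parity of $\ell$ and $|m|\le \ell$. Exploiting the $O(\ell)$ spectral gap between successive Neumann eigenvalues against the $O(\sqrt{\ell})$ cluster width from Theorem~\ref{thm:a1}, a Feshbach / Schur-complement argument reduces the problem to showing that the eigenvalues of the $(\ell+1)\times(\ell+1)$ matrix
\[
M^{(\ell)}_{m,m'} := \int_{-\pi}^{\pi} \sigma(\varphi)\, u_m(\pi/2,\varphi)\, \overline{u_{m'}(\pi/2,\varphi)}\, d\varphi
\]
equidistribute according to the right-hand side of the theorem, with off-cluster perturbative corrections vanishing after testing against the compactly supported $f$.

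Writing $u_m(\pi/2,\varphi) = \sqrt{2}\, c_{\ell,m}\, e^{im\varphi}$ with $c_{\ell,m} := Y_\ell^m(\pi/2,0)$, the matrix takes the form
\[
M^{(\ell)}_{m,m'} = 2\, c_{\ell,m}\, \overline{c_{\ell,m'}} \int_{-\pi}^{\pi}\sigma(\varphi)\, e^{i(m'-m)\varphi}\, d\varphi.
\]
Since both $m$ and $m'$ have the parity of $\ell$, only Fourier modes of $\sigma$ with even index contribute, and these are exactly the Fourier modes of $\sigma_\text{\rm even}$; this explains the latter's role in the theorem. Applying Stirling to the explicit value $P_\ell^m(0) = (-1)^{(\ell-m)/2}(\ell+m-1)!!/(\ell-m)!!$ (valid for $\ell-m$ even) yields the uniform asymptotic
\[
c_{\ell,m}^2 \;\sim\; \frac{1}{\pi^2\sqrt{1-(m/\ell)^2}}\qquad\text{on}\qquad |m|/\ell\le 1-\delta.
\]
As a sanity check, for constant $\sigma=\sigma_0$ the matrix $M^{(\ell)}$ is diagonal with entries $4\pi\sigma_0\, c_{\ell,m}^2\to 4\sigma_0/(\pi\sqrt{1-\xi^2})$ at $\xi=m/\ell$, in agreement with the theorem.

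To treat variable $\sigma$, I would partition the index set $\{m : |m|\le(1-\delta)\ell,\ \ell-m\text{ even}\}$ into consecutive blocks $I_j$ of sizes $N_j\to\infty$ with $N_j/\ell\to 0$. Within each block, $c_{\ell,m}^2$ is essentially the constant $\chi(\xi_j):=1/(\pi^2\sqrt{1-\xi_j^2})$ for a midpoint $\xi_j$, so the restriction of $M^{(\ell)}$ to $I_j$ is close to $(4/(\pi\sqrt{1-\xi_j^2}))\, T^{(j)}$, where $T^{(j)}$ is a Toeplitz matrix built from even-index Fourier coefficients of $\sigma$, equivalently those of $\sigma_\text{\rm even}$. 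Szeg\H{o}'s first limit theorem applied to $T^{(j)}$ (after re-indexing the block and using the $\pi$-periodicity of $\sigma_\text{\rm even}$) yields
\[
\frac{1}{N_j}\sum_k f\!\left(\frac{4\lambda_k(T^{(j)})}{\pi\sqrt{1-\xi_j^2}}\right)\;\longrightarrow\;\frac{1}{2\pi}\int_{-\pi}^{\pi} f\!\left(\frac{4\sigma_\text{\rm even}(\varphi)}{\pi\sqrt{1-\xi_j^2}}\right) d\varphi,
\]
and summing over blocks with weights $N_j/(\ell+1)\to d\xi/2$ (the $\xi=m/\ell$ spacing on same-parity indices is $2/\ell$) produces the Riemann sum that converges to the claimed double integral. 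The main technical obstacle is the simultaneous handling of (i) the rigorous perturbative reduction to $M^{(\ell)}$ against the $O(\ell)$ spectral gap, and (ii) the edge indices $|m|/\ell>1-\delta$ where the asymptotic for $c_{\ell,m}^2$ breaks down and matrix entries can reach size $O(\sqrt{\ell})$; since $f$ is compactly supported with $f(0)=0$, the edge eigenvalues are either outside $\mathrm{supp}(f)$ for large $\ell$ or negligible under the $1/(\ell+1)$ normalization, so letting $\delta\to 0$ after $\ell\to\infty$ should close the argument.
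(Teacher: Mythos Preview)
Your outline is viable but follows a different route from the paper in both stages, and the first stage has a gap as stated. For the reduction to the matrix $M^{(\ell)}$ (which is exactly the paper's $V_\ell[\sigma]$), the paper does \emph{not} use a Feshbach/Schur argument. Instead it proves a variational sandwich (Theorem~\ref{thm.d2}): for each $\eps>0$ and all large $\ell$,
\[
\lambda_k\bigl(V_\ell[\sigma-\eps|\sigma|]\bigr)\;\le\;\lambda_{L+k}(\sigma)-\ell(\ell+1)\;\le\;\lambda_k\bigl(V_\ell[\sigma+\eps|\sigma|]\bigr),
\]
obtained by decoupling $\Ran\mathbf P_\ell$ from its complement via $2|ab|\le\eps a^2+\eps^{-1}b^2$ inside the boundary quadratic form (Lemma~\ref{lma.var}), and then sending $\eps\to0$ after $\ell\to\infty$. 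Your heuristic ``$O(\ell)$ gap versus $O(\sqrt{\ell})$ width'' is not sufficient for Feshbach: the Robin perturbation is only \emph{form}-bounded, so the Schur correction is governed not by $\|R_\ell^\perp(\lambda)\|$ but by the boundary-restricted resolvent $\wt R_\ell^\perp(\lambda)=T R_\ell^\perp(\lambda)T^*$ on $L^2(\partial\Omega)$. The paper proves $\|\wt R_\ell^\perp(\lambda)\|=O(\ell^{-1/2}\log\ell)$ uniformly for $\lambda$ in the cluster (Lemma~\ref{lma.cc1}); combined with $\|T\mathbf P_\ell\|_{\Sch_2}^2=\Tr V_\ell[1]=O(\ell)$ this would make your Schur correction $o(\ell)$ in trace norm, but without this estimate (or an equivalent) your reduction does not close. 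The variational sandwich bypasses both the $\lambda$-dependence of the Schur complement and the form-boundedness issue.

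For the spectral asymptotics of $M^{(\ell)}$ itself, the paper rewrites it as $C[x_\ell]M[\sigma]C[x_\ell]$ on $L^2(\partial\Omega)$, replaces $x_\ell$ by its Stirling asymptote $z_\ell$ in trace norm (Lemma~\ref{lma.b2}), and then treats $C[z_\ell]M[\sigma]C[z_\ell]$ as a semiclassical pseudodifferential operator with symbol $\tfrac{4}{\pi}(1-\xi^2)^{-1/2}\sigma_{\mathrm{even}}(\varphi)$, computing $\Tr(\cdot)^k$ via commutator estimates and closing with Weierstrass approximation (Proposition~\ref{prp.b4}). Your block-Toeplitz/Szeg\H{o} scheme is a legitimate, more elementary alternative; inter-block coupling is easily controlled once $\sigma$ is approximated by a trigonometric polynomial (the matrix becomes banded), and the edge region $|m|/\ell>1-\delta$ is handled the same way in both approaches---a separate trace-class bound followed by $\delta\to0$. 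The paper's route gives cleaner global error control; yours buys elementarity at the cost of a double-limit ($N_j$ and $\delta$) bookkeeping.
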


In fact, with some more effort one can prove that Theorem~\ref{thm.a2} holds true for all Lipschitz functions $f$, i.e. $\abs{f(x)-f(y)}\leq C\abs{x-y}$; we do not pursue this direction.

\subsection{Odd $\sigma$}

An application of Theorem \ref{thm.a2} with $\sigma$ odd, i.e. satisfying
$$
\sigma(\varphi+\pi)=-\sigma(\varphi), \quad \varphi\in(-\pi,\pi],
$$
implies that the distribution of the Robin-Neumann gaps in the $\ell$'th cluster converge, as $\ell \rightarrow\infty$, to the delta function at the origin. This leaves open the question of what happens for odd $\sigma$ after possible further rescaling. In this direction we prove the following result:

%%%%%%%%%%%%%%%%%
\begin{theorem}\label{thm.a3}
%%%%%%%%%%%%%%%%%
Let $\sigma$ be an odd real-valued trigonometric polynomial of degree $d$ (where $d$ is odd), viz. 
\begin{equation}
\sigma(\varphi)=\sum_{\genfrac{}{}{0pt}{1}{\abs{m}\leq d}{\text{$m$ \rm odd}}} 
c_m e^{im\varphi}, \quad \overline{c_{m}}=c_{-m}, \quad c_d\not=0.
\label{eq:def-odd-sigma}
\end{equation}
Then for all sufficiently large $\ell$, the Robin eigenvalues in the $\ell$'th cluster satisfy
$$
\lambda_{L+k}(\sigma)=\ell(\ell+1), \quad L=\ell(\ell+1)/2
$$
for all $k=1,\dots,\ell+1$ except possibly for $d+1$ indices $k$.
In other words, at most $d+1$ Robin-Neumann gaps in the $\ell$'th cluster are non-zero.
\end{theorem}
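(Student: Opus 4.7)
The plan is to produce, for each sufficiently large $\ell$, at least $\ell-d$ linearly independent Robin eigenfunctions with eigenvalue exactly $\ell(\ell+1)$. Combined with Theorem~\ref{thm:a1}, which states that the cluster $\Lambda_\ell$ contains precisely $\ell+1$ Robin eigenvalues and evidently contains $\ell(\ell+1)$ at its centre, this forces at most $(\ell+1)-(\ell-d)=d+1$ of the cluster eigenvalues to differ from $\ell(\ell+1)$, which is the assertion.

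I will look for the required eigenfunctions inside the $(2\ell+1)$-dimensional span of $\{Y_\ell^m|_\Omega : |m|\leq \ell\}$, where the eigenvalue equation $-\Delta u = \ell(\ell+1)u$ holds automatically so that only the Robin condition $\partial_\theta u+\sigma u=0$ on the equator needs to be arranged. The degree-$\ell$ spherical harmonics split into two families according to the parity of $\ell-m$: those with $\ell-m$ even have nonzero trace $Y_\ell^m(\pi/2,\varphi)$, proportional to $e^{im\varphi}$, and vanishing normal derivative there; those with $\ell-m$ odd have vanishing trace and nonzero normal derivative. Writing the coefficient of $Y_\ell^m$ as $b_m$ in the former case and $c_m$ in the latter, and expanding the Robin condition in Fourier modes in $\varphi$, one obtains a linear system in $\{b_m,c_m\}$.

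The crux is the algebraic fact that $\sigma$ being odd in the sense $\sigma(\varphi+\pi)=-\sigma(\varphi)$ is equivalent to $\widehat\sigma(k)=0$ for every even $k\in\bbZ$. Consequently the Fourier equations at modes $n\equiv \ell\pmod 2$ reduce to the trivial identity $0=0$; at modes $n\not\equiv \ell\pmod 2$ with $|n|\leq\ell$, the equation expresses $c_n$ uniquely as a linear functional of the $b_m$'s (the coefficient of $c_n$ being nonzero by the non-vanishing of $\partial_\theta Y_\ell^n(\pi/2,\cdot)$); and for $n\not\equiv\ell\pmod 2$ with $|n|>\ell$, the finite Fourier support of $\sigma$ confines the nontrivial constraints to $\ell<|n|\leq\ell+d$, and a short count gives exactly $d+1$ such equations in the $\ell+1$ unknowns $\{b_m\}$.

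It follows that the solution space of admissible $\{b_m\}$ has dimension at least $(\ell+1)-(d+1)=\ell-d$; each admissible vector in turn recovers the $c_n$'s uniquely and hence produces a smooth eigenfunction of the Robin problem with eigenvalue exactly $\ell(\ell+1)$, satisfying the Robin BC by construction. The main care required is in the bookkeeping of Fourier modes under the parity reduction and in verifying that the outer frequency range contributes exactly $d+1$ equations (using the hypothesis $c_d\neq 0$ and odd $d$); there is no substantive analytic obstacle once this is laid out, the argument being essentially algebraic.
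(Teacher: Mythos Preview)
Your proposal is correct and follows essentially the same approach as the paper's proof. The paper organizes the argument slightly differently---defining the subspace $\calH_\ell^{N,-}=\{F\in\calH_\ell^N:F|_{\partial\Omega}\in\wt\calH_{\ell-d-1}^N\}$ of codimension $d+1$ and invoking isomorphisms $\calH_\ell^N\to\wt\calH_\ell^N$ (restriction) and $\calH_\ell^D\to\wt\calH_\ell^D$ (normal derivative) to manufacture a Dirichlet correction $F_D$ with $\partial_n F_D=-\sigma F_N|_{\partial\Omega}$---but this is exactly your Fourier-mode count recast in coordinate-free language, and the key nonvanishing facts you cite (nonzero trace for $\ell-m$ even, nonzero normal derivative for $\ell-m$ odd) are precisely what the paper isolates as its Lemma~6.1.
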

This shows that, at least for odd trigonometric polynomials, it is not possible to rescale the Robin-Neumann gaps to produce a meaningful limit.
The proof of Theorem \ref{thm.a3} is given in Section~\ref{sec:odd sigma}.

\subsection{Discussion}
By a change of variable, the statement of Theorem \ref{thm.a2} may be expressed as
\[
\lim_{\ell\to\infty}
\frac1{\ell+1}\sum_{k=1}^{\ell+1}f\bigl(\lambda_{L+k}(\sigma)-\ell(\ell+1)\bigr)
=
\int_{-\infty}^\infty f(y)\rho(\sigma;y)dy,
\]
where the density $\rho(\sigma;y)$ is given by (denoting $a_+=\max\{a,0\}$ and $a_-=\max\{-a,0\}$)
$$
\rho(\sigma;y)
=
\frac{1}{2\pi y^3}
\int_{-\pi}^\pi \left(\frac{4(\sigma_\text{\rm even}(\varphi))_+}{\pi}\right)^2 \left(1-\left(\frac{4(\sigma_\text{\rm even}(\varphi))_+}{\pi y}\right)^2\right)_+^{-1/2}d\varphi \quad \text{ for $y>0$}
$$
and
$$
\rho(\sigma;y)
=
\frac{1}{2\pi y^3}
\int_{-\pi}^\pi \left(\frac{4(\sigma_\text{\rm even}(\varphi))_-}{\pi}\right)^2 \left(1-\left(\frac{4(\sigma_\text{\rm even}(\varphi))_-}{\pi y}\right)^2\right)_+^{-1/2}d\varphi \quad \text{ for $y<0$.}
$$
If $\sigma$ is a positive constant, this gives
$$
\rho(\sigma;y)
=
\begin{cases}
\frac{16 \sigma^2}{\pi^2 y^3}
(1-(4\sigma/\pi y)^2)_+^{-1/2}, & y>0,
\\
0, & y\leq 0,
\end{cases}
$$
consistent with~\cite{RW}.

\vspace{2mm}

In the case of {\em constant} $\sigma$, the Robin problem allows for separation of variables.
For variable $\sigma$, this is no longer the case and we have to use some relatively advanced methods in spectral perturbation theory.

For $\sigma$ of {\em variable sign}, the density $\rho(\sigma;\cdot)$ is supported on both sides of the origin. As a consequence, for such $\sigma$ and for all sufficiently large $\ell$, there are Robin eigenvalues on both sides of $\ell(\ell+1)$ in the $\ell$'th cluster.

The assumed condition $\sigma\in C^1(\partial\Omega)$ could be relaxed to $\sigma\in L^\infty(\partial\Omega)$ and even to $\sigma\in L^p(\partial\Omega)$ for any $p>1$ at the expense of making the proofs more technical. This direction is not pursued within this paper.

\subsection{Related results}
In spectral theory, results of the type of Theorem \ref{thm.a2} originate from the classical work by A. Weinstein \cite{Weinstein} (see also \cite{G1,CdV}).
Weinstein considered the operator $-\Delta+V$, where $\Delta$ is the Laplace-Beltrami operator on an $n$-dimensional sphere (more generally, on a symmetric space of rank one) and $V$ is the operator of multiplication by a smooth real-valued function $V$ on the sphere (usually called a potential in this context). In this case, all of the eigenvalues of $-\Delta$ have finite multiplicities which grow with the eigenvalue. Adding the perturbation $V$ splits each eigenvalue into a cluster. Weinstein proved that the asymptotic eigenvalue distribution inside these clusters has a density function given by averaging $V$ along the closed geodesics of the sphere. For some follow-up work in mathematical physics see e.g. \cite{VB,RPVB}. Although these results seem related to our Theorem~\ref{thm.a2}, in the next subsection it is shown that a \emph{formal} application of Weinstein's formula in our setup produces a wrong density function due to an invalid interchange of limits w.r.t. different parameters.

Observe that Weinstein's formula for the density function depends only on the \emph{even} part of the potential $V$. In \cite{G3} Guillemin studied the eigenvalue clusters of $-\Delta+V$ on the sphere, where $V$ is an \emph{odd} potential. An application of Weinstein's formula for an odd potential yields a delta-function at the origin for the density function. Guillemin's result suggests that, after a suitable rescaling, one can still obtain a (different) density function, at least for some odd potentials. This scenario should be compared to our Theorem~\ref{thm.a3}.

\subsection{Comparison with Weinstein's formula}
Here we show what happens upon \emph{formally} applying Weinstein's formula in our setup. We will regard the Robin Laplacian on the hemisphere as a (singular) limit of the perturbed Neumann Laplacian. The geodesics
of the upper hemisphere $\Omega$ corresponding to the Neumann Laplacian are the great circles that are reflected from the equator according to Snell's law, i.e. (in this case) angle of incidence equals angle of refraction.

Let us parameterise the geodesics as follows. Each geodesic consists of two semi-circles, say $A_1$ and $A_2$, that meet at the equator. Let $\omega_1\in\Omega$ (resp. $\omega_2\in\Omega$) be the normal vector to
the plane containing $A_1$ (resp. $A_2$). If $\omega_1$ has the spherical coordinates $(\theta,\varphi)$, then $\omega_2$ has the spherical coordinates $(\theta,\varphi+\pi)$.
Let us denote the corresponding geodesic by $\Gamma(\theta,\varphi)$. It is clear that $\Gamma(\theta,\varphi+\pi)=\Gamma(\theta,\varphi)$. Thus, we will parameterise all the geodesics if we choose the point $(\theta,\varphi)$ to vary over half of the hemisphere: $\varphi\in[-\pi/2,\pi/2)$ and $\theta\in(0,\pi/2]$. Observe that the length of every such geodesic is $2\pi$. The geodesic $\Gamma(\theta,\varphi)$ gets reflected from the equator at the two points with the azimuthal angles $\varphi\pm\pi/2$, and the angle of incidence/refraction is $\theta$.

Now let us consider the operator $-\Delta_N+V$, where $\Delta_N$ is the Laplacian on the hemisphere with the Neumann boundary condition on the equator (this corresponds to the Robin condition with $\sigma=0$). We denote by
\[
\widetilde{V}(\theta,\varphi)=\frac1{2\pi}\int_{\Gamma(\theta,\varphi)}V(s)ds
\]
the average of $V$ over $\Gamma(\theta,\varphi)$; here $ds$ is the length element of the geodesic. Let $\{\lambda_k(V)\}_{k=1}^\infty$ be the eigenvalues of $-\Delta_N+V$ (listed in non-decreasing order).
These eigenvalues form clusters around the points $\ell(\ell+1)$. Formally applying Weinstein's formula for the density of eigenvalues to this case yields
\begin{equation}
\lim_{\ell\to\infty}\frac1{\ell+1}\sum_{k=1}^{\ell+1}f(\lambda_k(V)-\ell(\ell+1))
=
\frac1\pi \int_0^\pi \int_{0}^{\pi/2}f(\widetilde{V}(\theta,\varphi))\sin\theta\, d\theta\, d\varphi;
\label{a1}
\end{equation}
here the integral on the right hand side effects
averaging over all geodesics, and the factor $\pi$ in the denominator is the surface area of the quater-sphere (half of the upper hemisphere).

Now let $\sigma$ be a continuous function on the equator. For $\eps>0$ sufficiently small we define the function $V_\eps$ on the upper hemisphere by
\[
V_\eps(\theta,\varphi)
=
\begin{cases}
\frac1\eps\sigma(\varphi), & \text{if } \theta>\frac{\pi}{2}-\eps,
\\
0, & \text{if }\theta\leq\frac\pi2-\eps.
\end{cases}
\]
It is not difficult to see that the corresponding operator $-\Delta_N+V_\eps$ converges (in the strong resolvent sense) to the Robin Laplacian with the Robin parameter $\sigma$. Let us apply Weinstein's formula \eqref{a1} to $V_\eps$ and \emph{formally} interchange the limits w.r.t. $\ell\to\infty$ and $\eps\to0_+$. We claim that the resulting formula for the density function differs from the one in Theorem~\ref{thm.a2} by a factor of $2$. Indeed, simple geometric considerations show that averaging over a geodesic $\Gamma(\theta,\varphi)$ for sufficiently small $\eps$ yields
\[
\lim_{\eps\to0_+}\widetilde{V}_\eps(\theta,\varphi)
=\frac{2\sigma(\varphi+\tfrac{\pi}{2})+2\sigma(\varphi-\tfrac{\pi}{2})}{2\pi\sin\theta}
=\frac{2\sigma_{\mathrm{even}}(\varphi+\tfrac{\pi}{2})}{\pi\sin\theta}
\]
and therefore
\begin{align*}
\lim_{\eps\to0_+}
&\frac1\pi \int_0^\pi \int_{0}^{\pi/2}f(\widetilde{V}_\eps(\theta,\varphi))\sin\theta\, d\theta\, d\varphi
=
\frac1\pi \int_0^\pi \int_{0}^{\pi/2}f\left(\frac{2\sigma_{\mathrm{even}}(\varphi+\tfrac{\pi}{2})}{\pi\sin\theta}\right)\sin\theta\, d\theta\, d\varphi
\\
&=\frac1\pi \int_0^\pi \int_{0}^{1}f\left(\frac{2\sigma_{\mathrm{even}}(\varphi+\tfrac{\pi}{2})}{\pi\sqrt{1-\xi^2}}\right)\, d\xi\, d\varphi
=\frac1\pi \int_0^\pi \int_{0}^{1}f\left(\frac{2\sigma_{\mathrm{even}}(\varphi)}{\pi\sqrt{1-\xi^2}}\right)\, d\xi\, d\varphi
\\
&=\frac1{4\pi} \int_{-\pi}^\pi \int_{-1}^{1}f\left(\frac{2\sigma_{\mathrm{even}}(\varphi)}{\pi\sqrt{1-\xi^2}}\right)\, d\xi\, d\varphi,
\end{align*}
which differs from the correct result by a factor of 2 in the argument of $f$. We conclude that the ``naive'' application of Weinstein's formula in this case produces an incorrect result.

We observe that a similar phenomenon can be seen in the simple one-dimensional case.
Indeed, consider the Sturm-Liouville problem on the interval $[0,1]$.
The eigenvalues $\lambda_n(\sigma)$ of the Robin problem
\[
-u''=\lambda u, \quad -u'(0)+\sigma u(0)=u'(1)=0
\]
obey
\[
\lambda_n(\sigma)-\pi^2 n^2=2\sigma+o(1), \quad n\to\infty,
\]
while the eigenvalues $\mu_n(\sigma,\eps)$ of the problem
\[
-u''+\sigma\cdot \eps^{-1}\chi_{(0,\eps)}u=\mu u, \quad u'(0)=u'(1)=0
\]
obey
\[
\mu_n(\sigma,\eps)-\pi^2 n^2=\sigma+o(1), \quad n\to\infty
\]
for any $\eps>0$. Even though $\mu_n(\sigma,\eps)\to\lambda_n(\sigma)$ as $\eps\to0$ for each $n$, interchanging the limits $n\to\infty$ and $\eps\to0$ produces an incorrect result.

\subsection*{Acknowledgements}

The authors are grateful to Ze\'ev Rudnick for useful remarks on the preliminary version of the text, and to Jean Lagac\'e for useful discussions.
It is a pleasure to thank the anonymous referees for a thorough reading of the manuscript, that, in particular, helped us to rectify a few technical issues in the previous version of this paper.

%%%%%%%%%%%%%%%%%%%%%%%%%%%%%%%%%%%%%%%%
%%%%%%%%%%%%%%%%%%%%%%%%%%%%%%%%%%%%%%%%
\section{Definitions and key steps of the proof}\label{sec.bb}
%%%%%%%%%%%%%%%%%%%%%%%%%%%%%%%%%%%%%%%%
%%%%%%%%%%%%%%%%%%%%%%%%%%%%%%%%%%%%%%%%

\subsection{Summary}

We consider the Robin Laplacian as the perturbation of the Neumann Laplacian. The Robin-Neumann gaps in the $\ell$'th cluster are identified (up to a small error term) with the eigenvalues of a certain operator $V_\ell[\sigma]$ acting in the $\ell$'th eigenspace of the Neumann Laplacian; this is Theorem~\ref{thm.d2} below. Next, $V_\ell[\sigma]$ is shown to be unitarily equivalent to a semiclassical pseudodifferential operator acting
in the $L^2$ space of the equator $\partial\Omega$, with the semiclassical parameter $1/\ell$ (again up to a small error term). Finally, we appeal to standard trace asymptotics for  semiclassical operators, which yields the result.

\subsection{Notation and preliminaries}
We will work with operators in $L^2(\Omega)$, which is the Hilbert space with the norm
$$
\norm{u}^2_{L^2(\Omega)}
=
\int_{-\pi}^\pi\int_{0}^{\pi/2} \abs{u(\theta,\varphi)}^2\sin\theta\,  d\theta\,  d\varphi,
$$
and in $L^2(\partial\Omega)$, which is the Hilbert space with the norm
$$
\norm{u}^2_{L^2(\partial\Omega)}
=
\int_{-\pi}^\pi\abs{u(\varphi)}^2  d\varphi.
$$
We denote the corresponding inner products (linear in the first entry and anti-linear in the second one) by $\jap{\cdot,\cdot}_{L^2(\Omega)}$ and $\jap{\cdot,\cdot}_{L^2(\partial\Omega)}$.

If $A$ is a compact operator in a Hilbert space, let $\{s_n(A)\}_{n=1}^\infty$ be the sequence of singular values of $A$ (listed repeatedly in case of multiplicities).
For $1\leq p<\infty$, below
$$
\norm{A}_{\Sch_p}
=
\left(\sum_n s_n(A)^p\right)^{1/p}
$$
is the norm of $A$ in the Schatten class $\Sch_p$.
In fact, we will only need the trace norm $\norm{A}_{\Sch_1}$ and the Hilbert-Schmidt norm $\norm{A}_{\Sch_2}$. We recall the analogue of the Cauchy-Schwarz inequality for the Schatten norms,
$$
\norm{AB}_{\Sch_1}\leq\norm{A}_{\Sch_2}\norm{B}_{\Sch_2}.
$$
We also recall that the operator trace is a bounded linear functional on the trace class, with $\abs{\Tr A}\leq \norm{A}_{\Sch_1}$.

\subsection{Closed semibounded quadratic forms and the variational principle}\label{sec.var}
We briefly recall the notions related to the variational principle; see e.g. \cite[Section 10.2.3]{BS} for the details. Let $A$ be a self-adjoint operator with the domain $\Dom A$ on a Hilbert space. We recall that $A$ is called lower semi-bounded if for any $f\in\Dom A$ we have
$$
\jap{Af,f}\geq m\norm{f}^2
$$
with some constant $m\in\bbR$. By adding a multiple of the identity operator to $A$ if necessary, we may always assume that $m>0$. The \emph{quadratic form of $A$} is $a(f)=\norm{A^{1/2}f}^2$ with the domain $\Dom a=\Dom A^{1/2}$ (where the square root is defined in the sense of the functional calculus for self-adjoint operators). One can prove that the quadratic form $a$ is closed, i.e.  $\Dom a$ is complete with respect to the norm generated by $a$. Conversely, every closed lower semi-bounded quadratic form corresponds to a unique self-adjoint operator.

Let $A$ and $B$ be self-adjoint lower semi-bounded operators, and let $a$ and $b$ be the corresponding quadratic forms with the domains $\Dom a$ and $\Dom b$. One writes $A\leq B$ if $\Dom b\subset\Dom a$ and
$$
a(f)\leq b(f), \quad \forall f\in\Dom b.
$$
The utility of this notion comes from the variational principle. One of the versions of this principle asserts that if both $A$ and $B$ have compact resolvents, then the sequences of eigenvalues of $A$ and $B$ (enumerated in non-decreasing order) satisfy
$$
\lambda_n(A)\leq \lambda_n(B)
$$
for all indices $n$.
See e.g. \cite[Theorem 10.2.4]{BS} for the details.

\subsection{The Robin Laplacian $H[\sigma]$}\label{sec.2.4}
For a smooth function $u$ on the hemisphere, we write
$$
\int_{\Omega}\abs{\nabla u}^2dx
:=
\int_{-\pi}^\pi\int_{0}^{\pi/2}
\left(\Abs{\frac{\partial u}{\partial \theta}}^2+\frac1{(\sin\theta)^2}\Abs{\frac{\partial u}{\partial \varphi}}^2\right)\sin\theta\,  d\theta\,  d\varphi
$$
for the Dirichlet integral. Recall that the Sobolev space $W^1_2(\Omega)$ is the set of all functions $u\in L^2(\Omega)$ such that the Dirichlet integral is finite. Furthermore, by the Sobolev Trace Theorem (see e.g. \cite[Theorem 1.1.2]{Necas}) functions $u\in W^1_2(\Omega)$ can be restricted to the equator; the restrictions belong to $L^2(\partial\Omega)$ and satisfy the estimate
\begin{equation}
\int_{-\pi}^\pi \abs{u(\pi/2,\varphi)}^2d\varphi\leq C\int_{\Omega}\abs{\nabla u}^2dx, \quad u\in W^1_2(\Omega),
\label{b10}
\end{equation}
with some absolute constant $C$. Moreover, the corresponding embedding operator $W^1_2(\Omega)\subset L^2(\partial\Omega)$ is compact; see e.g. \cite[Theorem 2.6.2]{Necas}.

Let $h[\sigma]$ be the quadratic form
$$
h[\sigma](u)=\int_{\Omega}\abs{\nabla u}^2dx+\int_{-\pi}^\pi\sigma(\varphi)\abs{u(\pi/2,\varphi)}^2d\varphi,
\quad
u\in W^1_2(\Omega).
$$
Since $\sigma$ is bounded, by the embedding \eqref{b10}, the second integral in the right hand side is well-defined.
The form $h[\sigma]$ is closed in $L^2(\Omega)$; we denote by $H[\sigma]$ the self-adjoint operator corresponding to this form. This operator is the Robin Laplacian, whose eigenvalues have been earlier denoted by $\lambda_n(\sigma)$. Note that the case $\sigma=0$ corresponds to the Neumann Laplacian $H[0]$, and so we can write
\begin{equation}
h[\sigma](u)=h[0](u)+\int_{-\pi}^\pi\sigma(\varphi)\abs{u(\pi/2,\varphi)}^2d\varphi.
\label{d0}
\end{equation}
Below we consider $H[\sigma]$ as a perturbation of $H[0]$ in the quadratic form sense.
An important technical point here is that (by the compactness of the embedding $W^1_2(\Omega)\subset L^2(\partial\Omega)$) the quadratic form given by the integral over the equator in \eqref{d0} is compact in $W^1_2(\Omega)$, and therefore $H[\sigma]$ can be considered as an operator obtained from $H[0]$ by adding a relatively form-compact perturbation; see e.g. \cite{RS4} for the discussion of the concept of form-compactness.

\subsection{Spherical harmonics}\label{sec.b2}
The solutions to the eigenvalue equation
$$
-\Delta Y=\ell(\ell+1) Y
$$
on the sphere are the spherical harmonics of degree $\ell$; these are the restrictions of the degree-$\ell$ harmonic polynomials on $\R^{3}$ to the sphere. The linear space of degree-$\ell$ spherical harmonics is spanned by the $(2\ell+1)$ functions
\begin{equation}
Y_\ell^m(\theta,\varphi)
=
\sqrt{\frac{(2\ell+1)}{2\pi}\frac{(\ell-m)!}{(\ell+m)!}}P_\ell^m(\cos\theta)e^{im\varphi},
\quad
m=-\ell,\dots,\ell,
\label{b11}
\end{equation}
where $P_\ell^m$ are the associated Legendre polynomials, and the normalisation is chosen so that $Y_\ell^m$ are orthonormal in $L^2(\Omega)$:
$$
\int_0^{\pi/2} \int_{-\pi}^\pi Y_\ell^m(\theta,\varphi) \overline{Y_\ell^{m'}(\theta,\varphi)}\sin \theta\,  d\theta\, d\varphi=\delta_{m,m'}
$$
(this differs from the usual normalisation by the factor of $\sqrt{2}$ because the usual normalisation corresponds to the full sphere).
The spherical harmonics with $m-\ell$ even satisfy the Neumann boundary condition at the equator, whereas the ones with $m-\ell$ odd satisfy the Dirichlet boundary condition.

\subsection{Neumann eigenspaces}
\label{sec:Pl def}

We are interested in the Neumann eigenfunctions. Let $\mathbf P_\ell$ be the orthogonal projection in $L^2(\Omega)$ onto the eigenspace of the Neumann Laplacian corresponding to the eigenvalue $\ell(\ell+1)$. In other words, $\mathbf P_\ell$ is the projection onto the subspace spanned by $Y_\ell^m$, $m=-\ell,-\ell+2,\dots,\ell-2,\ell$:
\begin{equation}
\mathbf P_\ell:
u(\theta,\varphi)\mapsto
\sum_{\genfrac{}{}{0pt}{1}{m=-\ell}{\text{$m-\ell$ even}}}^\ell Y_\ell^m(\theta,\varphi)\int_0^{\pi/2}\int_{-\pi}^\pi \overline{Y_\ell^m(\theta',\varphi')}u(\theta',\varphi')\sin \theta'\, d\theta'\  d\varphi'\ .
\label{b11a}
\end{equation}
Clearly, the dimension of the range $\Ran \mathbf P_\ell$ is $\ell+1$.

\subsection{The operator $V_\ell[\sigma]$}
For $\ell\geq0$, let $V_\ell[\sigma]$ be the operator in $\Ran {\mathbf P}_\ell$ corresponding to the quadratic form
\begin{equation}
v_\ell[\sigma](u)=
\int_{-\pi}^\pi \sigma(\varphi)\abs{u(\pi/2,\varphi)}^2d\varphi, \quad u\in\Ran {\mathbf P}_\ell.
\label{b8}
\end{equation}
We note that for any spherical harmonic, the restriction onto the equator $\theta=\pi/2$ is a continuous function, and so the above integral is well-defined.

Furthermore, by \eqref{b11}, for any function $u\in\Ran {\mathbf P}_\ell$, the restriction onto the equator can be written as
$$
u(\pi/2,\varphi)=e^{i\ell\varphi}w(\varphi),
$$
where $w$ is even, i.e.  $w(\varphi+\pi)=w(\varphi)$, since $m-\ell$ in \eqref{b11a} is even.
It follows that $\abs{u(\pi/2,\varphi)}^2$ is even.
Therefore, $\sigma$ in \eqref{b8} can be replaced by $\sigma_\text{even}$, and so
$$
V_\ell[\sigma]=V_\ell[\sigma_\text{even}].
$$
This explains why the dependence of the limiting density of eigenvalues on $\sigma$ in Theorem~\ref{thm.a2} is through $\sigma_\text{even}$ alone.
The first key ingredient of our construction is the following result:
\begin{theorem}\label{thm.b1}
For any compactly supported test function $f\in C^\infty(\bbR)$ with $f(0)=0$, one has
\begin{equation}
\lim_{\ell\to\infty}\frac1{\ell+1}\Tr f(V_\ell[\sigma])
=
\frac1{4\pi}
\int_{-\pi}^{\pi}
\int_{-1}^1 f\left(\frac{4\sigma_\text{\rm even}(\varphi)}{\pi\sqrt{1-\xi^2}}\right)d\xi\,  d\varphi.
\label{eq.extra}
\end{equation}
Furthermore, \eqref{eq.extra} holds true with $f(x)=x$.
\end{theorem}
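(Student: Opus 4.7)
The plan is to identify $V_\ell[\sigma]$, after a natural unitary identification, with a semiclassical pseudodifferential operator on the equator $\partial\Omega$, and then invoke standard Weyl-type trace asymptotics. In the orthonormal basis $\{Y_\ell^m:m\in M_\ell\}$ of $\Ran\mathbf{P}_\ell$, with $M_\ell=\{m:\abs{m}\leq\ell,\,m\equiv\ell\pmod 2\}$, the relation $Y_\ell^m(\pi/2,\varphi)=c_{\ell,m}e^{im\varphi}$ produces the matrix representation
\[
\bigl[V_\ell[\sigma]\bigr]_{m,m'}=2\pi\,c_{\ell,m}c_{\ell,m'}\,\widehat\sigma(m'-m).
\]
Combining the closed formula for $P_\ell^m(0)$ with Stirling's asymptotics for the central binomial coefficient yields the uniform bulk asymptotic
\[
D_m:=2\pi c_{\ell,m}^2=\frac{4}{\pi\sqrt{1-(m/\ell)^2}}\bigl(1+o(1)\bigr),\qquad \abs{m}/\ell\leq 1-\delta,
\]
together with the crude edge bound $D_m=O(\ell^{1/2})$. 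Since $m-m'$ is automatically even, the matrix depends on $\sigma$ only through $\sigma_\text{even}$, consistent with the identity $V_\ell[\sigma]=V_\ell[\sigma_\text{even}]$.

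Next I absorb the signs $(-1)^{(\ell-m)/2}$ of $c_{\ell,m}$ into a diagonal unitary and identify $\Ran\mathbf{P}_\ell$ with the subspace $\mathcal{H}_\ell\subset L^2(\partial\Omega)$ spanned by $\{e^{im\varphi}:m\in M_\ell\}$ via $Y_\ell^m\mapsto(-1)^{(\ell-m)/2}e^{im\varphi}/\sqrt{2\pi}$. Under this identification the matrix of $V_\ell[\sigma]$ becomes $\sqrt{D_mD_{m'}}\,\widehat{\sigma_\text{even}}(m'-m)$, which is the matrix of $\sqrt{D_\ell}\,M_{\sigma_\text{even}}\sqrt{D_\ell}$ on $\mathcal{H}_\ell$, where $D_\ell$ is the Fourier multiplier with symbol $D(m/\ell)$. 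With semiclassical parameter $h=2/(\ell+1)$ and rescaled dual variable $\xi\approx m/\ell$, this is a semiclassical Weyl pseudodifferential operator on the phase space $(\varphi,\xi)\in[-\pi,\pi]\times[-1,1]$ of symplectic area $4\pi$, whose principal symbol is
\[
a(\varphi,\xi)=\frac{4\sigma_\text{even}(\varphi)}{\pi\sqrt{1-\xi^2}}.
\]

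For $f\in C_c^\infty(\bbR)$ with $f(0)=0$ and $\operatorname{supp}f\subset[-R,R]$, the compact support simultaneously annihilates any eigenvalues of $V_\ell[\sigma]$ lying outside $[-R,R]$ and forces $f(a)$ to vanish in a neighborhood of the singular set $\xi=\pm 1$. A spectral cutoff restricting attention to the bulk $\abs{m}/\ell\leq 1-\delta(R)$ places us in a regime of smooth, bounded symbols, where the standard semiclassical Weyl trace asymptotic (in the tradition of Dimassi--Sj\"ostrand and Helffer--Robert) gives
\[
\Tr f\bigl(V_\ell[\sigma]\bigr)=\frac{1}{2\pi h}\iint f(a)\,d\varphi\,d\xi+o(h^{-1})=\frac{\ell+1}{4\pi}\iint f(a)\,d\varphi\,d\xi+o(\ell),
\]
and division by $\ell+1$ yields the main claim. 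The assertion for $f(x)=x$ is handled separately by a direct diagonal computation: $\Tr V_\ell[\sigma]=\widehat\sigma(0)\sum_{m\in M_\ell}D_m$, and a Riemann-sum argument using the integrability $\int_{-1}^1 D(\xi)\,d\xi=4$ yields $\sum_{m\in M_\ell}D_m\sim 2\ell$, matching the right-hand side. The principal technical obstacle throughout is the singularity of the symbol $a$ at $\xi=\pm 1$, which prevents any uniform symbolic calculus; it is overcome for $f\in C_c^\infty$ with $f(0)=0$ by the spectral cutoff just described, and for $f(x)=x$ by the $L^1$-integrability of $D(\xi)$ itself.
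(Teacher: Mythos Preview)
Your overall strategy coincides with the paper's: transfer $V_\ell[\sigma]$ to the boundary as $\sqrt{D_\ell}\,M[\sigma_{\text{even}}]\,\sqrt{D_\ell}$ and read off the asymptotics from a semiclassical trace formula with symbol $a(\varphi,\xi)=\frac{4\sigma_{\text{even}}(\varphi)}{\pi\sqrt{1-\xi^2}}$. The paper carries this out by first proving the trace asymptotics for \emph{smooth} compactly supported weights $\omega$ (its Proposition~\ref{prp.b4}), then approximating the singular weight by such $\omega$ and controlling the error in trace norm via the inequality $\abs{\Tr f(A)-\Tr f(B)}\leq\norm{f'}_{L^\infty}\norm{A-B}_{\Sch_1}$.

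Your handling of the edge $\xi\to\pm1$ has a real gap. The ``crude edge bound $D_m=O(\ell^{1/2})$'' is too weak: there are $O(\delta\ell)$ modes with $\abs{m}/\ell>1-\delta$, so this bound gives an edge contribution $O(\delta\,\ell^{3/2})$, which is not $o(\ell)$ for any fixed $\delta$. This already breaks your Riemann-sum argument for $f(x)=x$ (you need $\sum_{m\in M_\ell}D_m\sim 2\ell$, but the bulk asymptotic plus the crude edge bound do not yield this), and it equally breaks any attempt to justify the spectral cutoff for general $f$: to show that compressing to $\abs{m}/\ell\leq 1-\delta$ changes $\Tr f$ by $o(\ell)$ one needs a trace-norm estimate like $\norm{\Pi^\perp\sqrt{D_\ell}}_{\Sch_2}\norm{\sqrt{D_\ell}}_{\Sch_2}=O(\delta^{1/4}\ell)$, and this in turn requires $\sum_{\text{edge}}D_m=O(\sqrt{\delta}\,\ell)$, i.e.\ the pointwise behaviour $D_{\ell-2j}\asymp\sqrt{\ell/j}$ rather than the uniform $O(\sqrt{\ell})$. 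The paper supplies exactly these refined edge estimates in its Lemma~\ref{lma.b2} (the bounds \eqref{b4} and \eqref{b5}), and they are the substantive analytic content of the proof. Your sentence ``a spectral cutoff \dots\ places us in a regime of smooth, bounded symbols'' is where this work is hidden; as written, the cutoff is asserted rather than justified. Note also that ``$f(a)$ vanishes in a neighbourhood of $\xi=\pm1$'' is a statement about the limiting symbol, not about the operator, and does not by itself license restricting attention to bulk modes.
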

The proof of Theorem \ref{thm.b1} is given in Section~\ref{sec.b}.

\subsection{Upper and lower bounds on eigenvalues in the $\ell$'th cluster}
The second key ingredient of our construction is the following result. Below we denote by $\lambda_k(V_\ell[\sigma])$ the eigenvalues of $V_\ell[\sigma]$, enumerated in non-decreasing order.
Since the dimension of $\Ran \mathbf P_\ell$ is $\ell+1$, there are $\ell+1$ of these eigenvalues.
\begin{theorem}\label{thm.d2}
Let $\eps>0$, and let $\sigma_\pm=\sigma\pm\eps\abs{\sigma}$. Then for all sufficiently large $\ell$, we have the estimates
$$
\lambda_k(V_\ell[\sigma_-])
\leq
\lambda_{L+k}(\sigma)-\ell(\ell+1)
\leq
\lambda_k(V_\ell[\sigma_+]), \quad L=\ell(\ell+1)/2,
$$
for $k=1,\dots,\ell+1$.
\end{theorem}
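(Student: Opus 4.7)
The plan is to bracket $h[\sigma]$ between two auxiliary quadratic forms $h^\pm$ on $W^1_2(\Omega)$ that block-diagonalise with respect to the orthogonal decomposition $L^2(\Omega) = \Ran \mathbf P_\ell \oplus \Ran Q$, where $Q = I - \mathbf P_\ell$. Writing $u_\ell = \mathbf P_\ell u$, $u' = Qu$, and denoting the natural extension of $v_\ell[\sigma]$ to $W^1_2(\Omega)$ by
$$
v[\sigma](u) := \int_{-\pi}^\pi \sigma(\varphi) \abs{u(\pi/2,\varphi)}^2 d\varphi,
$$
with sesquilinear companion $v[\sigma](u_\ell,u')$, I would split the perturbation as $v[\sigma](u) = v_\ell[\sigma](u_\ell) + 2\Re v[\sigma](u_\ell, u') + v[\sigma](u')$. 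Applying Young's inequality $2\abs{v[\sigma](u_\ell,u')} \leq \eps\, v_\ell[\abs{\sigma}](u_\ell) + \eps^{-1} v[\abs{\sigma}](u')$ to the cross term produces the form inequality
$$
h[\sigma](u) \leq h^+(u) := h[0](u) + v_\ell[\sigma_+](u_\ell) + (1+\eps^{-1})\, v[\abs{\sigma}](u'),
$$
together with the symmetric lower bound $h^-(u) \leq h[\sigma](u)$ obtained by replacing $\sigma_+$ with $\sigma_-$ and negating the last term.

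Since $h[0]$ respects the decomposition $\Ran \mathbf P_\ell \oplus \Ran Q$ and the two extra pieces of $h^+$ depend only on $u_\ell$ or only on $u'$, the self-adjoint operator associated to $h^+$ is an orthogonal direct sum
$$
H^+ = \bigl(\ell(\ell+1) + V_\ell[\sigma_+]\bigr) \oplus H^+_2,
$$
with $H^+_2$ acting on $\Ran Q \cap W^1_2(\Omega)$ through $h[0] + (1+\eps^{-1})\, v[\abs{\sigma}]$. The first summand has the $\ell+1$ eigenvalues $\mu_k = \ell(\ell+1) + \lambda_k(V_\ell[\sigma_+])$, all lying in a window of width $O(\sqrt\ell)$ around $\ell(\ell+1)$ by the standard $L^2(\partial\Omega)$ trace bound for spherical harmonics of degree $\ell$. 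I then apply Theorem~\ref{thm:a1} to $H^+$ itself: since $h^+ - h[0]$ is form-compact and bounded pointwise by $C(\sigma,\eps)\norm{u}_{L^2(\partial\Omega)}^2$, the argument of Section~\ref{sec:proof Lemma a1} applies with cosmetic changes and gives, for $\ell$ large, that $H^+$ has exactly $L$ eigenvalues below the cluster $\Lambda_\ell$ and exactly $\ell+1$ inside it (for a suitably enlarged cluster constant). Since the first block already contributes $\ell+1$ eigenvalues inside $\Lambda_\ell$, the direct-sum structure forces $H^+_2$ to have no eigenvalue inside $\Lambda_\ell$ and to supply all $L$ eigenvalues below; sorting the combined spectrum yields $\lambda_{L+k}(H^+) = \mu_k$ for $k = 1, \ldots, \ell+1$. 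The variational principle (Section~\ref{sec.var}) applied to $h[\sigma] \leq h^+$ then gives $\lambda_{L+k}(\sigma) \leq \ell(\ell+1) + \lambda_k(V_\ell[\sigma_+])$, and the symmetric argument with $h^-$ produces the lower bound.

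The delicate step is the invocation of Theorem~\ref{thm:a1} for the modified operator $H^+$: its perturbation of $H[0]$ is not of the pure Robin form $\int \sigma'\abs{u(\pi/2,\varphi)}^2 d\varphi$ but splits into a finite-rank piece on $\Ran \mathbf P_\ell$ plus a $Q$-restricted boundary integral with its own coefficient. One route is to verify that the argument of Section~\ref{sec:proof Lemma a1} extends essentially verbatim to any form-compact perturbation of $H[0]$ bounded above by a multiple of $\norm{u}_{L^2(\partial\Omega)}^2$ (this is the most natural outcome, since the key ingredients---form-compactness and trace estimates---are generic). Alternatively, one could argue directly, using the resolvent identity and gap estimates, that the spectral projection of the auxiliary Robin operator $H[(1+\eps^{-1})\abs{\sigma}]$ onto its $\ell$-th cluster converges in operator norm to $\mathbf P_\ell$ as $\ell \to \infty$, which would force the required spectral gap in $H^+_2$.
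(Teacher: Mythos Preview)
Your variational bracketing via Young's inequality and the resulting block-diagonal form of $H^\pm$ match the paper's Lemma~\ref{lma.var} exactly (up to the harmless replacement of $\sigma \pm \eps^{-1}\abs{\sigma}$ by $\pm(1+\eps^{-1})\abs{\sigma}$ on the perpendicular block, which gives a looser but still valid bound). The overall architecture is the paper's.

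The gap is precisely where you flagged it, and it is genuine: invoking Theorem~\ref{thm:a1} for $H^+$ is circular. In the paper, Theorem~\ref{thm:a1} is \emph{not} proved independently---Section~\ref{sec:proof Lemma a1} consists of the single sentence ``By Theorem~\ref{thm.d2} and Lemma~\ref{lma.b1}\ldots'', so there is no standalone argument there to adapt ``with cosmetic changes''. The fact you actually need---that the perpendicular block $H_\ell^\perp[\,\cdot\,]$ has no eigenvalues in $[\ell^2,(\ell+1)^2]$ and exactly $L$ eigenvalues below it---is the content of Lemma~\ref{lma.d3}, and it requires real work: the Birman--Schwinger identity \eqref{c2} reduces the question to showing $\norm{\sigma_0\,\wt R_\ell^\perp(\lambda)} < 1$ uniformly on $[\ell^2,(\ell+1)^2]$, and Lemma~\ref{lma.cc1} supplies the quantitative bound $\norm{\wt R_\ell^\perp(\lambda)} = O(\ell^{-1/2}\log\ell)$ by computing the Fourier coefficients of the boundary-restricted resolvent and using $A_{k,m}^2 = O(\sqrt{k})$. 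One then runs a continuity-in-$t$ argument for constant coefficient $t\sigma_0$ to count eigenvalues, and brackets variable $\sigma$ between $\pm\sigma_0$. Your option~(b) gestures in this direction, but the substance is this resolvent norm estimate on $\Ran\mathbf P_\ell^\perp$, and without it (or something equivalent) the proof does not close. Neither form-compactness nor the pointwise bound $\abs{h^+ - h[0]}\leq C\norm{u}_{L^2(\partial\Omega)}^2$ suffices on its own: those inputs do not by themselves prevent an eigenvalue of $H_2^+$ from drifting into $\Lambda_\ell$.
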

The proof is given in Sections~\ref{sec.cc} and \ref{sec.d}.

\subsection{Proof of Theorem~\ref{thm.a2}}\label{sec.b6}
The proof is a simple combination of the upper and lower bounds of Theorem~\ref{thm.d2} and the asymptotics of Theorem~\ref{thm.b1}. Applying the estimate
$$
\abs{f(a)-f(b)}\leq \norm{f'}_{L^\infty}\abs{a-b}
$$
to the second inequality (upper bound) in Theorem~\ref{thm.d2} and summing over $k$, we find
\begin{align*}
\biggl|\sum_{k=1}^{\ell+1}f(\lambda_{L+k}(\sigma)-\ell(\ell+1))&-\sum_{k=1}^{\ell+1}f(\lambda_k(V_\ell[\sigma_+]))\biggr|
\\
&\leq
\norm{f'}_{L^\infty}\sum_{k=1}^{\ell+1}
\biggl(
\lambda_k(V_\ell[\sigma_+])-\lambda_{L+k}(\sigma)+\ell(\ell+1)
\biggr)
\\
&\leq
\norm{f'}_{L^\infty}\sum_{k=1}^{\ell+1}\biggl(\lambda_k(V_\ell[\sigma_+])-\lambda_k(V_\ell[\sigma_-])\biggr)
\\
&=
\norm{f'}_{L^\infty}\Tr V_\ell[\sigma_+-\sigma_-]
=
2\eps\norm{f'}_{L^\infty}\Tr V_\ell[\abs{\sigma}].
\end{align*}
Using Theorem~\ref{thm.b1} with $f(x)=x$, we find
$$
\Tr V_\ell[\abs{\sigma}]\leq C(\ell+1)
$$
with some $C=C(\sigma)$. We conclude that
\begin{align*}
\limsup_{\ell\to\infty}\frac1{\ell+1}
\sum_{k=1}^{\ell+1}f(\lambda_{L+k}(\sigma)-\ell(\ell+1))
\leq
\limsup_{\ell\to\infty}\frac1{\ell+1}
\sum_{k=1}^{\ell+1}f(\lambda_k(V_\ell[\sigma_+]))
+2C\eps.
\end{align*}

Using Theorem~\ref{thm.b1}, the upper limit in the right hand side can be computed, which gives
\begin{align*}
\limsup_{\ell\to\infty}\frac1{\ell+1}&
\sum_{k=1}^{\ell+1}f(\lambda_{L+k}(\sigma)-\ell(\ell+1))
\\
&\leq
\frac1{4\pi}
\int_{-\pi}^{\pi}
\int_{-1}^1 f\left(\frac{4\sigma_\text{\rm even}(\varphi)+4\eps\abs{\sigma_\text{\rm even}(\varphi)}}{\pi\sqrt{1-\xi^2}}\right)d\xi\,  d\varphi
+C\eps.
\end{align*}
Since $f$ is smooth, we find
$$
\int_{-\pi}^{\pi}
\int_{-1}^1 f\left(\frac{4\sigma_\text{\rm even}(\varphi)+4\eps\abs{\sigma_\text{\rm even}(\varphi)}}{\pi\sqrt{1-\xi^2}}\right)d\xi\,  d\varphi
=
\int_{-\pi}^{\pi}
\int_{-1}^1 f\left(\frac{4\sigma_\text{\rm even}(\varphi)}{\pi\sqrt{1-\xi^2}}\right)d\xi\,  d\varphi
+O(\eps)
$$
as $\eps\to0$, and so finally we obtain
\begin{align*}
\limsup_{\ell\to\infty}\frac1{\ell+1}&
\sum_{k=1}^{\ell+1}f(\lambda_{L+k}(\sigma)-\ell(\ell+1))
\\
&\leq
\frac1{4\pi}
\int_{-\pi}^{\pi}
\int_{-1}^1 f\left(\frac{4\sigma_\text{\rm even}(\varphi)}{\pi\sqrt{1-\xi^2}}\right)d\xi\,  d\varphi
+C'\eps
\end{align*}
with some constant $C'$.
In the same way we find
\begin{align*}
\liminf_{\ell\to\infty}\frac1{\ell+1}&
\sum_{k=1}^{\ell+1}f(\lambda_{L+k}(\sigma)-\ell(\ell+1))
\\
&\geq
\frac1{4\pi}
\int_{-\pi}^{\pi}
\int_{-1}^1 f\left(\frac{4\sigma_\text{\rm even}(\varphi)}{\pi\sqrt{1-\xi^2}}\right)d\xi\,  d\varphi
-C'\eps.
\end{align*}
As $\eps>0$ can be taken arbitrarily small, we finally obtain
\begin{align*}
&\limsup_{\ell\to\infty}\frac1{\ell+1}
\sum_{k=1}^{\ell+1}f(\lambda_{L+k}(\sigma)-\ell(\ell+1))
\\
=&
\liminf_{\ell\to\infty}\frac1{\ell+1}
\sum_{k=1}^{\ell+1}f(\lambda_{L+k}(\sigma)-\ell(\ell+1))
\\
=&
\frac1{4\pi}
\int_{-\pi}^{\pi}
\int_{-1}^1 f\left(\frac{4\sigma_\text{\rm even}(\varphi)}{\pi\sqrt{1-\xi^2}}\right)d\xi\,  d\varphi,
\end{align*}
which is the required statement.
\qed
\subsection{The structure of the rest of the paper}
In the rest of the paper, we prove Theorems~\ref{thm.b1} and \ref{thm.d2}.
In Section~\ref{sec.b}, we prove Theorem~\ref{thm.b1}. The proof of one well-known technical ingredient is postponed to Appendix A. In Section~\ref{sec.cc}, we prove some auxiliary estimates for the resolvent of $H[\sigma]$ and related operators.
In Section~\ref{sec.d}, we prove Theorem~\ref{thm.d2}, and also Theorem~\ref{thm:a1}.
In Section~\ref{sec:odd sigma} we prove Theorem~\ref{thm.a3}.

%%%%%%%%%%%%%%%%%%%%%%%%%%%%%%%%%%%%%%%%
%%%%%%%%%%%%%%%%%%%%%%%%%%%%%%%%%%%%%%%%
\section{The eigenvalues of $V_\ell[\sigma]$}\label{sec.b}
%%%%%%%%%%%%%%%%%%%%%%%%%%%%%%%%%%%%%%%%
%%%%%%%%%%%%%%%%%%%%%%%%%%%%%%%%%%%%%%%%

\subsection{Overview}
Our main aim in this section is to prove Theorem~\ref{thm.b1} and also the following simple statement:
\begin{lemma}\label{lma.b1}
We have the estimate
\begin{equation}
\norm{V_\ell[\sigma]}=O(\sqrt{\ell}), \quad \ell\to\infty.
\label{b9}
\end{equation}
\end{lemma}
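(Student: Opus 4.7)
The plan is to bound $V_\ell[\sigma]$ by a direct computation in the spherical harmonic basis for $\Ran \mathbf P_\ell$. The elementary estimate $\abs{v_\ell[\sigma](u)} \leq \norm{\sigma}_{L^\infty}\int_{-\pi}^{\pi}\abs{u(\pi/2,\varphi)}^2\, d\varphi$ reduces the task to proving that the squared operator norm of the boundary trace $u \mapsto u|_{\partial\Omega}$, viewed as a map from $\Ran \mathbf P_\ell$ (with the $L^2(\Omega)$ norm) into $L^2(\partial\Omega)$, is $O(\sqrt{\ell})$.

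Expanding $u \in \Ran \mathbf P_\ell$ as $u = \sum_{m:\, m-\ell\text{ even}} a_m Y_\ell^m$ and using \eqref{b11}, the equatorial restriction becomes a Fourier series $u(\pi/2,\varphi) = \sum_m a_m b_{\ell,m} e^{im\varphi}$ with
\[
b_{\ell,m}^2 = \frac{2\ell+1}{2\pi}\cdot\frac{(\ell-m)!}{(\ell+m)!}\,P_\ell^m(0)^2.
\]
By Parseval's identity on the circle, $\int_{-\pi}^{\pi}\abs{u(\pi/2,\varphi)}^2 d\varphi = 2\pi \sum_m b_{\ell,m}^2 \abs{a_m}^2$, so the squared trace norm equals $2\pi \max_m b_{\ell,m}^2$, and the claim reduces to showing $\max_m b_{\ell,m}^2 = O(\sqrt{\ell})$.

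For this remaining maximum, I would substitute the closed form
\[
P_\ell^m(0) = \frac{(-1)^{(\ell-m)/2}(\ell+m)!}{2^\ell\,\bigl((\ell-m)/2\bigr)!\,\bigl((\ell+m)/2\bigr)!}
\]
(valid for $m-\ell$ even) and parameterise by $p = (\ell-m)/2$, $q = (\ell+m)/2$, so $p + q = \ell$. A short manipulation yields $b_{\ell,m}^2 = \tfrac{2\ell+1}{2\pi}\cdot 4^{-\ell}\binom{2p}{p}\binom{2q}{q}$, whereupon the uniform Stirling-type bound $\binom{2k}{k} \leq C\cdot 4^k/\sqrt{k+1}$ gives $b_{\ell,m}^2 \leq C'(2\ell+1)/\sqrt{(p+1)(q+1)}$, which is $O(\sqrt{\ell})$ because $pq \geq 0$ forces $(p+1)(q+1) \geq p + q + 1 = \ell + 1$.

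The main delicate point is the uniformity of the binomial estimate all the way down to the extreme values $m = \pm \ell$ (where $p$ or $q$ vanishes), since this is precisely the regime where $b_{\ell,m}^2$ attains its maximum of order $\sqrt{\ell}$; the resulting bound is therefore sharp. A generic eigenfunction restriction theorem of Burq--G\'erard--Tzvetkov type would only give $O(\ell)$ for the $L^2$ norm on the equatorial geodesic, so the explicit diagonalisation afforded by the Neumann parity selection (only $m-\ell$ even) is what makes the improvement to $O(\sqrt{\ell})$ possible.
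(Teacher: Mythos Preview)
Your proof is correct and follows essentially the same route as the paper: both reduce the operator norm to the pointwise bound $\max_{m}A_{\ell,m}^2=O(\sqrt{\ell})$ on the equatorial restriction coefficients (your $2\pi b_{\ell,m}^2$ coincides with the paper's $A_{\ell,m}^2$). The only cosmetic difference is that the paper routes the argument through the convolution-operator model $W_\ell[\sigma]=C[x_\ell]M[\sigma]C[x_\ell]$ and the Gamma-quotient expression $A_{\ell,m}^2=\frac{2\ell+1}{\pi}\gamma(\ell-m)\gamma(\ell+m)$, whereas you work directly with the quadratic form and express the same coefficients via central binomial coefficients; both derivations of the $O(\sqrt{\ell})$ bound are instances of Stirling's formula.
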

By taking $\sigma=\const>0$, it is easy to check that this estimate is sharp, i.e. in this case
$\norm{V_\ell[\sigma]}\geq c\sqrt{\ell}$
for some $c>0$ and all sufficiently large $\ell$. Observe that Theorem~\ref{thm.b1} with $f(x)=x$ states
$$
\Tr V_\ell[\sigma]=O(\ell), \quad \ell\to\infty;
$$
it is instructive to compare this with \eqref{b9}.

\subsection{Multiplications and convolutions}
We need some notation. Below we consider operators acting in $L^2(\partial\Omega)$.
For a bounded function $a$ on  $\partial\Omega$, we denote by $M[a]$ the operator of multiplication by $a(\varphi)$ in $L^2(\partial\Omega)$, and by $C[a]$ the operator of convolution with $a$:
\begin{equation}
\label{eq:C[a] conv op def}
C[a]: u(\varphi)\mapsto \frac1{2\pi}\int_{-\pi}^\pi u(\varphi')a(\varphi-\varphi')d\varphi'.
\end{equation}
Recall that if $a(\varphi)=\sum_n a_ne^{in\varphi}$, then by Parseval's theorem,  $C[a]$ is unitarily equivalent to the operator of multiplication by the sequence $\{a_n\}$ in $\ell^2(\bbZ)$. In particular, for the operator norm and the Hilbert-Schmidt norm of $C[a]$ we have
\begin{align}
\norm{C[a]}&=\sup_n\abs{a_n},
\label{b0a}
\\
\norm{C[a]}_{\Sch_2}^2&=\sum_{n=-\infty}^\infty\abs{a_n}^2.
\label{b0b}
\end{align}

\subsection{Three functions on the equator}
Below we will make use of three trigonometric polynomials on the equator.
In order to define them, we first recall that by \eqref{b11}, for $m-\ell$ even, the restrictions of the spherical harmonics onto the equator are
\begin{equation}
\label{eq:restr equator Alm1}
Y_\ell^m(\pi/2,\varphi)=(-1)^{\frac{\ell+m}{2}}
\frac{A_{\ell,m}}{\sqrt{2\pi}}e^{im\varphi},
\end{equation}
where
\begin{equation}
\label{eq:restr equator Alm2}
A_{\ell,m}:=(-1)^{\frac{\ell+m}{2}}
\sqrt{(2\ell+1)\cdot \frac{(\ell-m)!}{(\ell+m)!}}P_\ell^m(0).
\end{equation}
The factor $(-1)^{\frac{\ell+m}{2}}$ is introduced here because (as we will see below) this makes the constants $A_{\ell,m}$ positive. It will be convenient to set $A_{\ell,m}=0$ if $\ell-m$ is odd or $\abs{m}>\ell$. Now let us denote
\begin{align}
x_\ell(\varphi)&=\sqrt{2\pi}\sum_{\genfrac{}{}{0pt}{1}{m=-\ell}{\text{$m-\ell$ even}}}^\ell (-1)^{\frac{\ell+m}{2}}Y_\ell^m(\pi/2,\varphi)
=
\sum_{\genfrac{}{}{0pt}{1}{m=-\ell}{\text{$m-\ell$ even}}}^\ell
A_{\ell,m}e^{im\varphi},
\notag
\\
\label{eq:y ell def}
y_\ell(\varphi)&=\sum_{\genfrac{}{}{0pt}{1}{m=-\ell}{\text{$m-\ell$ even}}}^\ell A_{\ell,m}^2 e^{im\varphi},
\\
z_\ell(\varphi)
&=
\sum_{\genfrac{}{}{0pt}{1}{m=-\ell+2}{\text{$m-\ell$ even}}}^{\ell-2}
\frac{2}{\sqrt{\pi}}(1-(m/\ell)^2)^{-1/4}e^{im\varphi}.
\label{b6}
\end{align}
We will show that $z_\ell$ is the asymptotic form of $x_\ell$  for large $\ell$.

\subsection{Reduction to an operator in $L^2(\partial\Omega)$}

Recall that $V_\ell[\sigma]$ is the operator acting in the $(\ell+1)$-dimensional subspace $\Ran {\mathbf P}_\ell\subset L^2(\Omega)$.
Below we reduce the analysis of the operator $V_\ell[\sigma]$ to the analysis of a simple operator in $L^2(\partial\Omega)$.
\begin{lemma}
The non-zero eigenvalues of $V_\ell[\sigma]$ coincide (including multiplicities) with the non-zero eigenvalues of the operator
\begin{equation}
W_\ell[\sigma]:=C[x_\ell]M[\sigma]C[x_\ell]\quad \text{ in $L^2(\partial\Omega)$.}
\label{b2a}
\end{equation}
In particular, for every test function $f\in C^\infty(\bbR)$ vanishing at the origin,
\begin{equation}
\Tr f(V_\ell[\sigma])=\Tr f(W_\ell[\sigma]).
\label{b2}
\end{equation}
\end{lemma}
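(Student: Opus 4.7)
My plan is to introduce the boundary restriction map $R\colon \Ran \mathbf{P}_\ell \to L^2(\partial\Omega)$ defined by $(Ru)(\varphi)=u(\pi/2,\varphi)$. Since the quadratic form \eqref{b8} reads $v_\ell[\sigma](u) = \jap{M[\sigma]Ru,Ru}_{L^2(\partial\Omega)}$, we obtain the factorisation
\[
V_\ell[\sigma] = R^{*}M[\sigma]R.
\]

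The next step is to compute $RR^{*}$ explicitly on $L^2(\partial\Omega)$. Using the orthonormal basis $\{Y_\ell^m\}_{m-\ell\text{ even}}$ of $\Ran \mathbf{P}_\ell$ together with the boundary values \eqref{eq:restr equator Alm1}, a short Fourier-series computation shows that $RR^{*}$ is the Fourier multiplier sending $e^{im\varphi}$ to $A_{\ell,m}^{2}\,e^{im\varphi}$ (and to $0$ when $m-\ell$ is odd, where we set $A_{\ell,m}=0$). This is exactly $C[y_\ell]$ by the definition \eqref{eq:y ell def}. Using the symmetry $A_{\ell,-m}=A_{\ell,m}$ — a consequence of the standard identity $P_\ell^{-m}=(-1)^m\tfrac{(\ell-m)!}{(\ell+m)!}P_\ell^m$ — the function $x_\ell$ is real-valued and $C[x_\ell]$ is self-adjoint; by \eqref{b0a} and the multiplicative behaviour of Fourier multipliers, $C[x_\ell]^{2}=C[y_\ell]$, so $RR^{*}=C[x_\ell]^{2}$.

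Finally, I invoke twice the standard principle that, for compact operators $A$ and $B$ between Hilbert spaces, the non-zero eigenvalues of $AB$ and $BA$ coincide including multiplicities. The first application, with $A=R^{*}$ and $B=M[\sigma]R$, shows that the non-zero eigenvalues of $V_\ell[\sigma]=R^{*}M[\sigma]R$ coincide with those of $M[\sigma]RR^{*}=M[\sigma]C[x_\ell]^{2}$. The second application, with $A=M[\sigma]C[x_\ell]$ and $B=C[x_\ell]$, then shows that the non-zero eigenvalues of $M[\sigma]C[x_\ell]^{2}$ coincide with those of $C[x_\ell]M[\sigma]C[x_\ell]=W_\ell[\sigma]$, completing the identification. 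The trace identity \eqref{b2} is then immediate: $V_\ell[\sigma]$ acts on the $(\ell+1)$-dimensional space $\Ran\mathbf{P}_\ell$ and $W_\ell[\sigma]$ has finite rank (since $C[x_\ell]$ does), so, because $f(0)=0$, both $\Tr f(V_\ell[\sigma])$ and $\Tr f(W_\ell[\sigma])$ reduce to the same sum $\sum_k f(\mu_k)$ over the common list of non-zero eigenvalues.

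The only mildly delicate point is verifying the symmetry $A_{\ell,-m}=A_{\ell,m}$, without which $C[x_\ell]$ would fail to be self-adjoint and $RR^{*}$ would have to be written as $C[x_\ell]C[x_\ell]^{*}$ rather than $C[x_\ell]^{2}$; everything else amounts to routine bookkeeping with Fourier series and two applications of the $AB$/$BA$ trick.
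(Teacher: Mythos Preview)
Your proof is correct but proceeds differently from the paper. The paper argues by direct matrix comparison: it notes that $C[x_\ell]e_m=(-1)^{(\ell+m)/2}Y_\ell^m(\pi/2,\cdot)$ for the normalised exponentials $e_m$, computes the matrix of $W_\ell[\sigma]$ in the basis $\{(-1)^{(\ell+m)/2}e_m\}$, and checks that it coincides entry-by-entry with the matrix of $V_\ell[\sigma]$ in the basis $\{Y_\ell^m\}$. Your route is more structural: factorise $V_\ell[\sigma]=R^*M[\sigma]R$, identify $RR^*=C[x_\ell]^2$, and then invoke the $AB\leftrightarrow BA$ eigenvalue principle twice. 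Both arguments are short; the paper's is slightly more explicit (it exhibits the unitary intertwining), while yours avoids choosing bases and would generalise more readily to other restriction-type factorisations.

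One small remark: the ``mildly delicate point'' you flag is not actually needed. Self-adjointness of $C[x_\ell]$ is equivalent to its Fourier multipliers $A_{\ell,m}$ being real, and this is immediate from the definition \eqref{eq:restr equator Alm2}; the symmetry $A_{\ell,-m}=A_{\ell,m}$ plays no role there. In fact your argument does not even use self-adjointness: the identity $C[x_\ell]^2=C[y_\ell]$ follows purely from the multiplicativity of Fourier multipliers, and the second $AB/BA$ step works for arbitrary bounded $C[x_\ell]$. So the point you call delicate is harmless, and your proof goes through without it.
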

\begin{proof}
Denote by
$e_m(\varphi)=\frac1{\sqrt{2\pi}}e^{im\varphi}$, $m\in\bbZ$,
the standard orthonormal basis in $L^2(\partial\Omega)$.
From \eqref{eq:restr equator Alm1}, \eqref{eq:restr equator Alm2},  it follows that
$$
C[x_\ell]e_m=\begin{cases}
(-1)^{\frac{\ell+m}{2}}Y_\ell^m(\pi/2,\cdot) & \abs{m}\leq\ell, \, \ell-m\text{ even} \\
0 & \text{otherwise}
\end{cases}
$$
By the definition of $x_\ell$, the range of the operator $W_\ell[\sigma]$ belongs to the $(\ell+1)$-dimensional subspace of $L^2(\partial\Omega)$ spanned by the elements
$e_m$, $m=-\ell,\dots,\ell$, with $m-\ell$ even.
Let us compute the matrix of $W_\ell[\sigma]$ in the basis $(-1)^{\frac{\ell+m}{2}}e_m$:
\begin{align*}
(-1)^{\frac{\ell+m}{2}}(-1)^{\frac{\ell+m'}{2}}
&\jap{W_\ell[\sigma]e_m,e_{m'}}_{L^2(\partial\Omega)}
\\
&=
(-1)^{\frac{\ell+m}{2}}(-1)^{\frac{\ell+m'}{2}}
\jap{M[\sigma]C[x_\ell]e_m,C[x_\ell]e_{m'}}_{L^2(\partial\Omega)}
\\
&=
\int_{-\pi}^\pi \sigma(\varphi) Y_\ell^m(\pi/2,\varphi)\overline{Y_\ell^{m'}(\pi/2,\varphi)}d\varphi .
\end{align*}
Next, let us compute the matrix of $V_\ell[\sigma]$ in the orthonormal basis of $\Ran {\mathbf P}_\ell$ given by $Y_\ell^m$, $m=-\ell,\dots,\ell$, with $m-\ell$ even:
\[
\jap{V_\ell[\sigma]Y_\ell^m,Y_\ell^{m'}}_{L^2(\Omega)}
=
\int_{-\pi}^\pi \sigma(\varphi)Y_\ell^m(\pi/2,\varphi)\overline{Y_\ell^{m'}(\pi/2,\varphi)}d\varphi.
\]
Comparing this, we see that the matrices of $V_\ell[\sigma]$ and $W_\ell[\sigma]$ coincide, hence the result.
\end{proof}

\subsection{Asymptotics of $x_\ell$}

Our next step is to replace $x_\ell$ by its asymptotics as $\ell\to\infty$.
We recall the constants $A_{\ell,m}$ defined in \eqref{eq:restr equator Alm2}. We have
$$
P_\ell^m(0)=(-1)^{(m+\ell)/2}\frac{2^m}{\sqrt{\pi}}\frac{\Gamma\left(\frac{\ell+m+1}{2}\right)}{\Gamma\left(\frac{\ell-m}{2}+1\right)},
$$
and therefore
$$
A_{\ell,m}=
2^m\sqrt{\frac{(2\ell+1)}{\pi}\frac{(\ell-m)!}{(\ell+m)!}}\frac{\Gamma\left(\frac{\ell+m+1}{2}\right)}{\Gamma\left(\frac{\ell-m}{2}+1\right)}>0.
$$
Below we prove that
$$
A_{\ell,m}^2=\frac{4}{\pi}(1-(m/\ell)^2)^{-1/2}+\text{error term}
$$
as $\ell\to\infty$ and $\ell^2-m^2\to\infty$,
with suitable estimates for the error term.
This is the key technical ingredient of the proof of Theorem~\ref{thm.b1}.

%%%%%%%%%%%%%%%%%%%%
\begin{lemma}\label{lma.b2}
%%%%%%%%%%%%%%%%%%%%
As $\ell\to\infty$, we have
\begin{align}
\sum_
{m=-\ell+1}^{\ell-1}&
(1-(m/\ell)^2)^{-\frac12}=O(\ell),
\label{b3}
\\
\sum_{m=-\ell}
^\ell& A_{\ell,m}^2
=O(\ell),
\label{b4}
\\
\sum_
{\genfrac{}{}{0pt}{1}{m=-\ell+1}{\text{$m-\ell$ even}}}
^{\ell-1}&
\Abs{A_{\ell,m}^2-\frac{4}{\pi}(1-(m/\ell)^2)^{-\frac12}}=O(\ell^{2/3}),
\label{b5}
\\
\sup_
{\abs{m}\leq \ell}
&A_{\ell,m}^2=O(\sqrt{\ell}).
\label{b5a}
\end{align}
\end{lemma}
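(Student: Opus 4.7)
The plan is to reduce all four estimates to the single clean identity
\[
A_{\ell,m}^2 = \frac{2\ell+1}{\pi}\, R(\ell-m)\, R(\ell+m), \qquad R(n):=\frac{\Gamma((n+1)/2)}{\Gamma((n+2)/2)},
\]
which I would verify by applying the Legendre duplication formula $\Gamma(z)\Gamma(z+\tfrac12) = 2^{1-2z}\sqrt{\pi}\,\Gamma(2z)$ at $z=(\ell\pm m+1)/2$ to eliminate the factorials $(\ell\pm m)!$ in the definition of $A_{\ell,m}$. Stirling's formula then yields both the quantitative expansion $R(n) = (n/2)^{-1/2}(1 + O(1/n))$ uniformly in $n \geq 1$ and the crude bound $R(n) \leq C(n+1)^{-1/2}$ valid for all $n \geq 0$, with the case $n=0$ handled by hand as $R(0)=\sqrt{\pi}$.

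With this in hand, the side estimates (2) and (4) are immediate. For (2) I would invoke Unsöld's addition theorem: at $\theta=\pi/2$ the Dirichlet spherical harmonics vanish, so the pointwise identity $\sum_{m=-\ell}^{\ell} |Y_\ell^m(\pi/2,\varphi)|^2 = (2\ell+1)/(2\pi)$ collapses to $\sum_{m-\ell\text{ even}} A_{\ell,m}^2/(2\pi)$, giving $\sum A_{\ell,m}^2 = 2\ell+1$. For (4), the identity combined with the crude bound on $R$ gives $A_{\ell,m}^2 \leq C(2\ell+1)/\sqrt{(\ell-m+1)(\ell+m+1)}$, whose supremum over $|m|\leq\ell$ is attained at $m=\pm\ell$ and equals $O(\sqrt{\ell})$. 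Estimate (1) is proved directly, without the identity: by symmetry and the pointwise bound $(1-(m/\ell)^2)^{-1/2} = \ell/\sqrt{(\ell-m)(\ell+m)} \leq \sqrt{\ell/(\ell-|m|)}$, the sum is at most $2\sqrt{\ell}\sum_{k=1}^\ell k^{-1/2} = O(\ell)$.

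For the main estimate (3), I would apply the quantitative Stirling expansion termwise. Because of the parity constraint, $\ell\pm m\geq 2$ everywhere in the sum, so the relative error in replacing $R(\ell\pm m)$ by its leading term is $O(1/(\ell\pm m))$; combined with the $O(1/\ell)$ error from $(2\ell+1)/\ell$, this yields
\[
\bigl|A_{\ell,m}^2 - \tfrac{4}{\pi}(1-(m/\ell)^2)^{-1/2}\bigr| \leq C(1-(m/\ell)^2)^{-1/2}\Bigl(\tfrac{1}{\ell} + \tfrac{1}{\ell-|m|} + \tfrac{1}{\ell+|m|}\Bigr).
\]
The first term sums to $O(1)$ by (1). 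For the other two, using $(1-(m/\ell)^2)^{-1/2}/(\ell-|m|) \leq \sqrt{\ell}/(\ell-|m|)^{3/2}$ together with convergence of $\sum_{k\geq 2} k^{-3/2}$, each sums to $O(\sqrt{\ell})$. Altogether the left-hand side of (3) is $O(\sqrt{\ell})$, comfortably within the advertised $O(\ell^{2/3})$.

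The main obstacle I anticipate is the behaviour of the Stirling error near $|m|=\ell$: the pointwise relative error $1/(\ell-|m|)$ blows up at the endpoints, and a naive uniform bound would not suffice. The argument relies essentially on the density factor $(1-(m/\ell)^2)^{-1/2}$ introducing only an $(\ell-|m|)^{-1/2}$ singularity, so that the product $(\ell-|m|)^{-3/2}$ remains summable. Once the reduction to the $R$-ratios is in place this is a one-line computation, but deriving the identity and carefully checking that Stirling's expansion applies uniformly across the summation range are where most of the work lies.
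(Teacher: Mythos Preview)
Your proposal is correct and shares the same backbone as the paper's proof: both start from the duplication formula to obtain the product representation $A_{\ell,m}^2=\frac{2\ell+1}{\pi}\gamma(\ell-m)\gamma(\ell+m)$ (your $R$ is the paper's $\gamma$), and both then feed in the Stirling expansion $\gamma(x)=\sqrt{2/x}(1+O(1/x))$. The estimates \eqref{b3} and \eqref{b5a} are handled essentially identically.

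There are two genuine differences worth recording. For \eqref{b4} the paper deduces it from \eqref{b3} and \eqref{b5}, whereas you invoke Uns\"old's addition theorem to obtain the exact value $\sum_m A_{\ell,m}^2=2\ell+1$; this is cleaner and logically independent of the other three estimates. For \eqref{b5} the paper introduces a cutoff at $|m|=\ell-\ell^{1/3}$, using a uniform $O(\ell^{-1/3})$ relative error on the bulk and separate crude bounds on the edge, which is what produces the exponent $2/3$. You instead keep the exact pointwise relative error $O(1/(\ell-|m|))$ and observe that the resulting sum $\sqrt{\ell}\sum_{k\geq2}k^{-3/2}$ converges; this avoids the cutoff entirely and yields the sharper bound $O(\sqrt{\ell})$. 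The paper's splitting argument is more robust if one only has a weaker asymptotic for $\gamma$, but with the full $1+O(1/x)$ expansion in hand your direct summation is both shorter and stronger.
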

\begin{proof}
Using the duplication formula for the Gamma function,
$$
\Gamma(z)\Gamma(z+\tfrac12)=\sqrt{\pi}2^{1-2z}\Gamma(2z),
$$
and assuming that $|m|\le \ell$ and $\ell-m$ is even, we can rewrite the expression for $A_{\ell,m}$ as
\begin{equation}
A_{\ell,m}^2=\frac{2\ell+1}{\pi}\gamma(\ell-m)\gamma(\ell+m),
\quad\text{ where }\quad
\gamma(x):=\frac{\Gamma(\frac{x}{2}+\frac12)}{\Gamma(\frac{x}{2}+1)}.
\label{b1}
\end{equation}
Recall that, by convention, for all other $\ell,m$, one has $A_{\ell,m}=0$.
By \eqref{b1}, we have $A_{\ell,-m}^2=A_{\ell,m}^2$ and so instead of summing from $-\ell$ to $\ell$ we can estimate the sums from $m=0$ to $m=\ell$.

The estimate \eqref{b3} is elementary:
\begin{equation}
\sum_{m=0}^{\ell-1}(1-(m/\ell)^2)^{-1/2}
\leq
\int_0^\ell(1-(x/\ell)^2)^{-1/2}dx
=
\ell\int_0^1(1-x^2)^{-1/2}dx
=\frac{\pi}{2}\ell.
\label{b3a}
\end{equation}
Next, we use the asymptotic formula for the Gamma function (see e.g. \cite[(6.1.47)]{AS})
$$
\frac{\Gamma(z+a)}{\Gamma(z+b)}=z^{a-b}(1+O(1/z)), \quad z\to+\infty.
$$
For the function $\gamma(x)$ from \eqref{b1} this yields
$$
\gamma(x)=\sqrt{2/x}(1+O(1/x)), \quad x\to\infty.
$$
Let $0\leq m\leq \ell-\ell^{1/3}$ and $\ell-m$ even; then
\begin{align*}
A_{\ell,m}^2
&=
\frac{2\ell+1}{\pi}\gamma(\ell-m)\gamma(\ell+m)
\\
&=\frac{2\ell}{\pi}(1+O(\ell^{-1}))
\sqrt{\frac{2}{\ell-m}}(1+O(\ell^{-1/3}))
\sqrt{\frac{2}{\ell+m}}(1+O(\ell^{-1}))
\\
&=\frac{4}{\pi}\frac{\ell}{\sqrt{(\ell-m)(\ell+m)}}(1+O(\ell^{-1/3}))
\\
&=\frac{4}{\pi}(1-(m/\ell)^2)^{-\frac12}(1+O(\ell^{-1/3})),
\end{align*}
and so
$$
\sum_{\genfrac{}{}{0pt}{1}{0\leq m\leq \ell-\ell^{1/3}}{\text{$m-\ell$ even}}}
\Abs{A_{\ell,m}^2-\frac{4}{\pi}(1-(m/\ell)^2)^{-\frac12}}
\leq
C\ell^{-1/3}\sum_{\genfrac{}{}{0pt}{1}{0\leq m\leq \ell-\ell^{1/3}}{\text{$m-\ell$ even}}}
(1-(m/\ell)^2)^{-1/2}.
$$
Using \eqref{b3} (on ignoring the condition $\ell-m$ even), we conclude that
$$
\sum_{\genfrac{}{}{0pt}{1}{0\leq m\leq \ell-\ell^{1/3}}{\text{$m-\ell$ even}}}
\Abs{A_{\ell,m}^2-\frac{4}{\pi}(1-(m/\ell)^2)^{-\frac12}}
=
O(\ell^{2/3}).
$$
Let us now estimate the sum over $\ell-\ell^{1/3}\leq m< \ell$.
We have
\begin{align*}
\sum_{\ell-\ell^{1/3}\leq m< \ell}(1-(m/\ell)^2)^{-1/2}
&\leq
\int_{\ell-\ell^{1/3}}^\ell(1-(x/\ell)^2)^{-1/2}dx
\\
&=
\ell\int_{1-\ell^{-2/3}}^1(1-x^2)^{-1/2}dx
=
\ell \ O(\ell^{-1/3})=O(\ell^{2/3})
\end{align*}
and similarly, using that $\gamma(x)=O(x^{-1/2})$,
\begin{align*}
\sum_
{\ell-\ell^{1/3}\leq m< \ell}
A_{\ell,m}^2
&\leq
C\ell \sum_{\ell-\ell^{1/3}\leq m< \ell}\gamma(\ell-m)\gamma(\ell+m)
\\
&\leq
C\ell^{1/2}\sum_{1\leq m\leq \ell^{1/3}}\gamma(m)
=
C\ell^{1/2}\sum_{1\leq m\leq \ell^{1/3}}O(m^{-1/2})
\\
&\leq
C\ell^{1/2}\ell^{1/6}
=
C\ell^{2/3}.
\end{align*}
This yields \eqref{b5}. The estimate \eqref{b4} follows from \eqref{b3} and \eqref{b5}.

Finally, let us check \eqref{b5a}; we have
$$
\max_{\abs{m}\leq \ell}
A_{\ell,m}^2 \le
\frac{2\ell+1}{\pi}\max_{\abs{m}\leq \ell}\gamma(\ell-m)\gamma(\ell+m)
=\frac{2\ell+1}{\pi}O(1/\sqrt{\ell})=O(\sqrt{\ell}),
$$
as required.
\end{proof}

\subsection{Proof of Lemma~\ref{lma.b1}}

\begin{proof}
Since $\sigma$ is bounded, we have
\begin{align*}
\norm{V_\ell[\sigma]}
&=
\norm{W_\ell[\sigma]}
=
\norm{C[x_\ell]M[\sigma]C[x_\ell]}
\leq
\norm{M[\sigma]}\norm{C[x_\ell]}^2
\\
&=
\norm{M[\sigma]}\norm{C[x_\ell]^2}.
\end{align*}
Using the notation $y_\ell$ (see \eqref{eq:y ell def}),
we find $C[x_\ell]^2=C[y_\ell]$.
Finally, from  \eqref{b0a}
$$
\norm{C[y_\ell]}=\sup_m A_{\ell,m}^2=O(\sqrt{\ell}),
$$
where we have used Lemma~\ref{lma.b2} at the last step. The proof is complete.
\end{proof}

\subsection{Asymptotics of traces of model operators}
Lemma~\ref{lma.b2} suggests that one may be able to replace $x_\ell$ in \eqref{b2a} by $z_\ell$, as defined in \eqref{b6}.
In preparation for this, we need a proposition.
Observe that the Fourier coefficients of $z_\ell$ have the form
$\omega(m/\ell)$ with $\omega(x)=2\pi^{-1/2}(1-x^2)_+^{-1/4}$.
The proposition below discusses the case where $\omega$ is replaced by a smooth function.

%%%%%%%%%%%%%%%%%
\begin{proposition}\label{prp.b4}
%%%%%%%%%%%%%%%%%
Let $\omega\in C_0^\infty(-1,1)$ be a non-negative function, and let $\omega_\ell$ be the function on $\partial\Omega$,
$$
\omega_\ell(\varphi)
=
\sum_{\genfrac{}{}{0pt}{1}{m=-\ell+2}{\text{$m-\ell$ \rm even}}}^{\ell-2}
\omega(m/\ell)e^{im\varphi}.
$$
Then for any test function $f\in C^\infty(\bbR)$ vanishing at the origin,
$$
\lim_{\ell\to\infty}
\frac{1}{\ell+1}\Tr f(C[\omega_\ell]M[\sigma_\text{\rm even}]C[\omega_\ell]^*)
=
\frac1{4\pi}\int_{-\pi}^{\pi}
\int_{-1}^1 f\bigl(\abs{\omega(\xi)}^2\sigma_\text{\rm even}(\varphi)\bigr)d\xi\,  d\varphi.
$$
\end{proposition}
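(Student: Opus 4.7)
The plan is to use the method of moments combined with polynomial approximation. Since $\omega$ and $\sigma_{\text{\rm even}}$ are bounded, the operator $T_\ell:=C[\omega_\ell]M[\sigma_{\text{\rm even}}]C[\omega_\ell]^*$ satisfies $\norm{T_\ell}\leq R:=\norm{\omega}_\infty^2\norm{\sigma_{\text{\rm even}}}_\infty$ uniformly in $\ell$, and its rank is bounded by the number of Fourier modes in the support of $\omega_\ell$, which is $O(\ell)$. Given $f\in C^\infty(\bbR)$ with $f(0)=0$, the Weierstrass approximation theorem furnishes, for every $\eps>0$, a polynomial $p$ with $p(0)=0$ and $\sup_{[-R,R]}\abs{f-p}<\eps$; then
\[
\Abs{\Tr f(T_\ell)-\Tr p(T_\ell)}\leq \mathrm{rank}(T_\ell)\cdot\sup_{[-R,R]}\abs{f-p}\leq C(\ell+1)\eps,
\]
so after dividing by $\ell+1$ and letting $\ell\to\infty$ followed by $\eps\to 0$, the proposition reduces to the monomial case $f(x)=x^k$, $k\geq 1$.

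For $f(x)=x^k$, the plan is to compute $\Tr T_\ell^k$ combinatorially in the Fourier basis $\{e_m\}$. The matrix entries are $(T_\ell)_{m',m}=\omega(m/\ell)\,\omega(m'/\ell)\,(\sigma_{\text{\rm even}})_{m'-m}$, nonzero only for $m,m'\equiv\ell\pmod 2$ with $\abs{m},\abs{m'}\leq \ell-2$, where $(\sigma_{\text{\rm even}})_n$ denotes the $n$-th Fourier coefficient. Expanding the trace as a cyclic sum over $(m_1,\dots,m_k)$ and substituting $n_j:=m_{j+1}-m_j$ (with $m_{k+1}:=m_1$, so that $n_1+\cdots+n_k=0$), I would obtain
\[
\Tr T_\ell^k=\sum_{\vec n:\sum n_j=0}\Bigl(\prod_{j=1}^k (\sigma_{\text{\rm even}})_{n_j}\Bigr)S_\ell(\vec n),\quad S_\ell(\vec n):=\sum_{m\equiv \ell\,(2)}\prod_{j=1}^k \omega\bigl((m+N_{j-1})/\ell\bigr)^2,
\]
with $N_0:=0$, $N_j:=n_1+\cdots+n_j$. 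For each fixed $\vec n$, $S_\ell(\vec n)$ is a step-$2/\ell$ Riemann sum for the smooth compactly supported integrand $\omega^{2k}$, so $S_\ell(\vec n)/\ell\to \tfrac12\int_{-1}^1 \omega(\xi)^{2k}\,d\xi$ as $\ell\to\infty$. Combined with the Parseval-type identity $\sum_{\vec n:\sum n_j=0}\prod_j (\sigma_{\text{\rm even}})_{n_j}=\tfrac1{2\pi}\int_{-\pi}^\pi \sigma_{\text{\rm even}}(\varphi)^k\, d\varphi$ (both sides being the zeroth Fourier coefficient of $\sigma_{\text{\rm even}}^k$), a formal interchange of sum and limit gives
\[
\frac{\Tr T_\ell^k}{\ell+1}\longrightarrow \frac{1}{4\pi}\int_{-\pi}^\pi\int_{-1}^1 \omega(\xi)^{2k}\sigma_{\text{\rm even}}(\varphi)^k\,d\xi\,d\varphi,
\]
which is the desired conclusion for $f(x)=x^k$.

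The main obstacle will be to justify this interchange rigorously: since $\sigma\in C^1(\partial\Omega)$ only guarantees that the Fourier coefficients $\sigma_n$ are $O(1/\abs{n})$, the sum over $\vec n$ is not absolutely convergent in general. My plan to circumvent this is to first approximate $\sigma_{\text{\rm even}}$ in $L^\infty(\partial\Omega)$ by trigonometric polynomials $\sigma^{(N)}$ (e.g.\ via Fej\'er sums), for which the $\vec n$-sum becomes finite and the computation above is elementary. Writing $T_\ell^{(N)}$ for the analogue of $T_\ell$ with $\sigma^{(N)}$ in place of $\sigma_{\text{\rm even}}$, a telescoping argument based on the rank and operator-norm bounds above yields
\[
\tfrac{1}{\ell+1}\Abs{\Tr T_\ell^k-\Tr(T_\ell^{(N)})^k}\leq CkR^{k-1}\norm{\omega}_\infty^2\norm{\sigma_{\text{\rm even}}-\sigma^{(N)}}_\infty
\]
uniformly in $\ell$. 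Taking $\ell\to\infty$ first (applying the computation to $\sigma^{(N)}$) and then $N\to\infty$ (using continuity of the right-hand integral in $\sigma_{\text{\rm even}}$) completes the argument.
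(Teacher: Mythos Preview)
Your argument is correct and takes a genuinely different route from the paper's for the monomial step. The paper also reduces to $f(x)=x^k$ via Weierstrass (with a two-sided sandwich $p_-\le f\le p_+$ rather than your rank bound, but this is cosmetic), and then handles $\Tr T_\ell^k$ operator-theoretically: it proves a Hilbert--Schmidt commutator estimate $\norm{[M[\sigma],C[\omega_\ell]]}_{\Sch_2}=O(1)$ (using the $C^1$ regularity of $\sigma$), and uses it to commute factors so that $\Tr\bigl((M[\sigma]C[\Omega_\ell])^k\bigr)=\Tr\bigl(M[\sigma]^k C[\Omega_\ell]^k\bigr)+O(\sqrt\ell)$, the leading term being directly computable on the Fourier side. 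Your approach instead expands the trace combinatorially from the outset and handles the convergence by approximating $\sigma_{\text{even}}$ uniformly by trigonometric polynomials, for which the $\vec n$-sum is finite.

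Each approach has its own advantage. The commutator method is cleaner once the $\Sch_2$ estimate is in hand and gives an explicit error rate $O(\ell^{-1/2})$, but that estimate genuinely uses a Lipschitz bound on $\sigma$. Your route is more elementary---no Schatten-class machinery beyond rank and operator-norm bounds---and only needs uniform approximability of $\sigma_{\text{even}}$, hence continuity suffices; on the other hand it involves a second approximation layer (in $\sigma$) on top of Weierstrass (in $f$). Both are valid self-contained proofs of the proposition.
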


The operator $C[\omega_\ell]M[\sigma_\text{\rm even}]C[\omega_\ell]^*$ is essentially a pseudodifferential operator with the symbol $\abs{\omega(\xi)}^2\sigma_\text{\rm even}(\varphi)$ and the semiclassical parameter $1/\ell$, and so the above proposition can be derived from the semiclassical spectral theory for general pseudodifferential operators, see e.g. \cite{DS}. In order to make the paper self-contained, we give a direct proof in the Appendix.

Our next aim is to replace a smooth function $\omega$ in the previous proposition by the function $2\pi^{-1/2}(1-x^2)_+^{-1/4}$ which has singularities at $x=\pm1$. This is done through an approximation argument.
We need to prepare a  general operator-theoretic statement.

\subsection{Aside on eigenvalue estimates}

\begin{proposition}\label{prp.b5}
Let $A$ and $B$ be finite rank self-adjoint operators in a Hilbert space  and let $f\in C^1(\bbR)$.
Then
$$
\Abs{\Tr(f(A)-f(B))}\leq \norm{f'}_{L^\infty}\norm{A-B}_{\Sch_1}.
$$
\end{proposition}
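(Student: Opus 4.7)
The plan is to reduce the problem to a finite-dimensional trace inequality and then combine the mean value theorem with a Lidskii-type rearrangement inequality. The key preparatory observation is that since both $A$ and $B$ have finite rank, the subspace $V := \Ran A + \Ran B$ is finite-dimensional (say of dimension $N$), and on its orthogonal complement $V^\perp = \Ker A \cap \Ker B$ both $A$ and $B$ vanish. Consequently $(f(A) - f(B))|_{V^\perp} = 0$, and the trace in question reduces to
$$
\Tr\bigl(f(A) - f(B)\bigr) = \Tr_V\bigl(f(A|_V) - f(B|_V)\bigr) = \sum_{j=1}^N \bigl(f(\lambda_j(A)) - f(\lambda_j(B))\bigr),
$$
where $\lambda_1(\cdot)\geq\cdots\geq\lambda_N(\cdot)$ denote the eigenvalues of $A|_V$ and $B|_V$ arranged in non-increasing order (with zeros included as needed).

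Next, I would apply the mean value theorem termwise, which immediately yields
$$
\abs{\Tr\bigl(f(A) - f(B)\bigr)} \leq \norm{f'}_{L^\infty} \sum_{j=1}^N \abs{\lambda_j(A) - \lambda_j(B)}.
$$
It then remains to bound $\sum_{j=1}^N \abs{\lambda_j(A) - \lambda_j(B)}$ by $\norm{A-B}_{\Sch_1}$. This is precisely the content of the Lidskii--Wielandt inequality for self-adjoint operators on $V$: with eigenvalues of $A|_V$ and $B|_V$ arranged in the same non-increasing order,
$$
\sum_{j=1}^N \abs{\lambda_j(A) - \lambda_j(B)} \leq \sum_{j=1}^N s_j(A-B) = \norm{A-B}_{\Sch_1}.
$$
Combining the last two displays gives the required estimate.

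The main obstacle is this Lidskii--Wielandt inequality, the one non-trivial piece of matrix analysis in the argument; fortunately it is a classical result (see e.g. R.\ Bhatia, \emph{Matrix Analysis}, Chapter III) and can simply be invoked. As an alternative that sidesteps it entirely, one can use a Duhamel-type identity
$$
\Tr\bigl(f(A)-f(B)\bigr) = \int_0^1 \Tr\bigl(f'(A_t)(A-B)\bigr)\,dt, \qquad A_t := B + t(A-B),
$$
and apply the trace-norm bound $\abs{\Tr(f'(A_t)(A-B))} \leq \norm{f'(A_t)}\cdot\norm{A-B}_{\Sch_1} \leq \norm{f'}_{L^\infty}\norm{A-B}_{\Sch_1}$, using that $f'(A_t)$ is a bounded function of a self-adjoint operator. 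The identity itself is easily checked for polynomial $f$ via cyclicity of the trace and extends to $f\in C^1$ by uniform approximation on any compact interval containing the spectra of $A_t$ for $t\in[0,1]$. Either route completes the proof.
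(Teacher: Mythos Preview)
Your proof is correct. Both routes you propose are valid: the first invokes the Lidskii--Wielandt inequality after a mean-value estimate, and the second uses the Duhamel-type interpolation identity for the trace.

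The paper takes a slightly different, more self-contained path. Instead of citing Lidskii--Wielandt, it decomposes $X=B-A$ into its positive and negative parts $X=X_+-X_-$ and uses the variational principle to sandwich both ordered eigenvalue sequences between those of $A-X_-$ and $A+X_+$:
\[
\lambda_n(A-X_-)\leq \lambda_n(A),\ \lambda_n(B)\leq \lambda_n(A+X_+).
\]
Summing the resulting pointwise bound $\abs{f(\lambda_n(A))-f(\lambda_n(B))}\leq\norm{f'}_{L^\infty}\bigl(\lambda_n(A+X_+)-\lambda_n(A-X_-)\bigr)$ over $n$ gives $\norm{f'}_{L^\infty}\Tr(X_++X_-)=\norm{f'}_{L^\infty}\norm{A-B}_{\Sch_1}$. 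In effect, the paper is proving a weak form of Lidskii's inequality inline via the $X_\pm$ decomposition rather than quoting it. Your approach is cleaner if one is willing to cite Bhatia; the paper's approach avoids external references and matches the style of the argument already used in Section~\ref{sec.b6}. Your Duhamel alternative is also self-contained and arguably the slickest of the three.
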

\begin{proof}
This estimate is well-known in the framework of the spectral shift function theory (see e.g. \cite{BYa}),
but for the sake of completeness we give a direct proof in this simple case.
In fact, we have already seen a similar argument in the proof of Theorem~\ref{thm.a2} in Section~\ref{sec.b6}.

Since $A$ and $B$ are finite rank, we may assume that our Hilbert space is finite dimensional, i.e. $A$ and $B$ are both $N\times N$ Hermitian matrices.
Denote $X=B-A$ and write $X=X_+-X_-$ with both $X_+$ and $X_-$ positive semi-definite. Then
$$
-X_-\leq X\leq X_+
$$
and therefore by the variational principle (with eigenvalues ordered non-increasingly) we have
\begin{align*}
\lambda_n(A-X_-)&\leq \lambda_n(A)\leq \lambda_n(A+X_+),
\\
\lambda_n(A-X_-)&\leq \lambda_n(B)\leq \lambda_n(A+X_+)
\end{align*}
for $n=1,\dots,N$.
It follows that for all $n$,
\[
\abs{f(\lambda_n(B))-f(\lambda_n(A))}
\leq
\norm{f'}_{L^\infty}(\lambda_n(A+X_+)-\lambda_n(A-X_-)).
\]
Summing over $n=1,\dots,N$, we find
\begin{align*}
\Abs{\Tr(f(A)-f(B))}
&\leq
\norm{f'}_{L^\infty}(\Tr(A+X_+)-\Tr(A-X_-))
\\
&=
\norm{f'}_{L^\infty}\Tr(X_++X_-).
\end{align*}
Finally, using the spectral decomposition of $X$, the operators $X_+$ and $X_-$ can be chosen such that $\norm{X}_{\Sch_1}=\Tr(X_++X_-)$.
\end{proof}

\subsection{Swapping $\omega_\ell$ for $z_\ell$}

\begin{theorem}
Let $z_\ell$ be as in \eqref{b6}.
Then for any compactly supported test function $f\in C^\infty(\bbR)$ vanishing at the origin,
$$
\lim_{\ell\to\infty}
\frac{1}{\ell+1}\Tr f(C[z_\ell]M[\sigma_\text{\rm even}]C[z_\ell])
=
\frac1{4\pi}\int_{-\pi}^{\pi}
\int_{-1}^1 f\left(\frac{4\sigma_\text{\rm even}(\varphi)}{\pi\sqrt{1-\xi^2}}\right)d\xi\,  d\varphi.
$$
\end{theorem}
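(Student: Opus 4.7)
The plan is to reduce this to Proposition~\ref{prp.b4} by a cut-off approximation that handles the integrable singularity of $\omega(\xi)=2\pi^{-1/2}(1-\xi^2)^{-1/4}$ at $\xi=\pm 1$. For a small parameter $\delta>0$, I would fix an even cut-off $\chi^{(\delta)}\in C_0^\infty(-1,1)$ with $0\le\chi^{(\delta)}\le 1$ and $\chi^{(\delta)}\equiv 1$ on $[-1+\delta,1-\delta]$, and set $\omega^{(\delta)}:=\chi^{(\delta)}\omega\in C_0^\infty(-1,1)$. Let $\omega_\ell^{(\delta)}$ be defined from $\omega^{(\delta)}$ exactly as in Proposition~\ref{prp.b4}. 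Since $\omega^{(\delta)}$ is real and even, $C[\omega_\ell^{(\delta)}]^*=C[\omega_\ell^{(\delta)}]$, and Proposition~\ref{prp.b4} applied to $\omega^{(\delta)}$ yields
\[
\lim_{\ell\to\infty}\frac1{\ell+1}\Tr f\bigl(C[\omega_\ell^{(\delta)}]M[\sigma_\text{\rm even}]C[\omega_\ell^{(\delta)}]\bigr)=\frac1{4\pi}\int_{-\pi}^\pi\int_{-1}^1 f\bigl(\abs{\omega^{(\delta)}(\xi)}^2\sigma_\text{\rm even}(\varphi)\bigr)\,d\xi\,d\varphi.
\]

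Next, I would compare $C[z_\ell]M[\sigma_\text{\rm even}]C[z_\ell]$ with $C[\omega_\ell^{(\delta)}]M[\sigma_\text{\rm even}]C[\omega_\ell^{(\delta)}]$ in trace norm. Writing $D_\ell:=C[z_\ell-\omega_\ell^{(\delta)}]$ and using the identity
\[
C[z_\ell]M[\sigma_\text{\rm even}]C[z_\ell]-C[\omega_\ell^{(\delta)}]M[\sigma_\text{\rm even}]C[\omega_\ell^{(\delta)}]=D_\ell M[\sigma_\text{\rm even}]C[z_\ell]+C[\omega_\ell^{(\delta)}]M[\sigma_\text{\rm even}]D_\ell,
\]
together with $\norm{AB}_{\Sch_1}\le\norm{A}_{\Sch_2}\norm{B}_{\Sch_2}$ and boundedness of $\sigma$, reduces the task to estimating $\norm{D_\ell}_{\Sch_2}$ and $\norm{C[z_\ell]}_{\Sch_2}$. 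The Fourier coefficients of $z_\ell-\omega_\ell^{(\delta)}$ vanish for $\abs{m/\ell}\le 1-\delta$ and are bounded in modulus by $\omega(m/\ell)$ otherwise; comparing the resulting sum with the integral $\int_{1-\delta}^{1}(1-x^2)^{-1/2}dx=O(\sqrt{\delta})$ and invoking \eqref{b0b} gives $\norm{D_\ell}_{\Sch_2}^2=O(\ell\sqrt{\delta})$. On the other hand, \eqref{b3} and \eqref{b0b} imply $\norm{C[z_\ell]}_{\Sch_2}^2=O(\ell)$, and similarly for $C[\omega_\ell^{(\delta)}]$. Combining these estimates yields
\[
\norm{C[z_\ell]M[\sigma_\text{\rm even}]C[z_\ell]-C[\omega_\ell^{(\delta)}]M[\sigma_\text{\rm even}]C[\omega_\ell^{(\delta)}]}_{\Sch_1}=O(\ell\,\delta^{1/4}),
\]
and Proposition~\ref{prp.b5} converts this to
\[
\frac1{\ell+1}\Abs{\Tr f\bigl(C[z_\ell]M[\sigma_\text{\rm even}]C[z_\ell]\bigr)-\Tr f\bigl(C[\omega_\ell^{(\delta)}]M[\sigma_\text{\rm even}]C[\omega_\ell^{(\delta)}]\bigr)}\le C\delta^{1/4}
\]
uniformly in $\ell$.

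Finally, I would first let $\ell\to\infty$ and then $\delta\to 0$. Since $f$ is bounded and compactly supported, and the base domain $(-\pi,\pi)\times(-1,1)$ has finite measure, dominated convergence together with the pointwise convergence $\abs{\omega^{(\delta)}(\xi)}^2\to\abs{\omega(\xi)}^2=4/(\pi\sqrt{1-\xi^2})$ on $(-1,1)$ gives
\[
\int_{-\pi}^\pi\int_{-1}^1 f\bigl(\abs{\omega^{(\delta)}(\xi)}^2\sigma_\text{\rm even}(\varphi)\bigr)d\xi\,d\varphi\to\int_{-\pi}^\pi\int_{-1}^1 f\!\left(\frac{4\sigma_\text{\rm even}(\varphi)}{\pi\sqrt{1-\xi^2}}\right)d\xi\,d\varphi,
\]
which completes the proof. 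The main technical point is the $\sqrt{\delta}$ gain in the Hilbert--Schmidt bound on $D_\ell$: it is this factor, coming from the integrability of $(1-x^2)^{-1/2}$ near $\pm 1$, that makes the trace-class error $o(\ell+1)$ as $\delta\to 0$ uniformly in $\ell$, and without it the cut-off approximation would not be quantitative enough.
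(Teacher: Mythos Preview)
Your proof is correct and follows essentially the same approach as the paper: approximate the singular weight $\omega(\xi)=2\pi^{-1/2}(1-\xi^2)^{-1/4}$ by a smooth compactly supported $\omega^{(\delta)}\in C_0^\infty(-1,1)$ that agrees with $\omega$ away from $\pm1$, estimate the trace-norm error between $C[z_\ell]M[\sigma_\text{even}]C[z_\ell]$ and the corresponding truncated operator as $O(\ell\,\delta^{1/4})$ via the Hilbert--Schmidt bound $\norm{C[z_\ell-\omega_\ell^{(\delta)}]}_{\Sch_2}^2=O(\ell\sqrt{\delta})$, invoke Proposition~\ref{prp.b5}, and then Proposition~\ref{prp.b4}, finally sending $\delta\to0$. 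The only cosmetic differences are that the paper writes the final step as a limsup/liminf sandwich rather than citing dominated convergence, and does not spell out that $C[\omega_\ell^{(\delta)}]$ is self-adjoint (which you correctly note).
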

\begin{proof}
\emph{Step 1: trace class estimate.}
For any $\eps>0$, let $\omega^{(\eps)}\in C_0^\infty(-1,1)$ be a non-negative function such that
\begin{align*}
\omega^{(\eps)}(\xi)& = 2\pi^{-1/2}(1-\xi^2)_+^{-1/4}, \quad \abs{\xi}<1-\eps,
\\
\omega^{(\eps)}(\xi)&\leq 2\pi^{-1/2}(1-\xi^2)_+^{-1/4}, \quad 1-\eps\leq\abs{\xi}<1.
\end{align*}
Using \eqref{b0b} and  \eqref{b3a}, we obtain:
\begin{align*}
\norm{C[z_\ell]}_{\Sch_2}^2
&=
\frac{4}{\pi}
\sum_{\genfrac{}{}{0pt}{1}{m=-\ell+2}{\text{$m-\ell$ even}}}^{\ell-2}
(1-(m/\ell)^2)^{-1/2}\leq 2\ell,
\\
\norm{C[\omega^{(\eps)}_\ell]}_{\Sch_2}^2
&=
\sum_{\genfrac{}{}{0pt}{1}{m=-\ell+2}{\text{$m-\ell$ even}}}^{\ell-2}
\abs{\omega^{(\eps)}(m/\ell)}^2
\leq 2\ell,
\\
\norm{C[z_\ell]-C[\omega^{(\eps)}_\ell]}_{\Sch_2}^2
&\leq
\frac{4}{\pi}
\sum_{\genfrac{}{}{0pt}{1}{1-\eps<\abs{m/\ell}<1}{\text{$m-\ell$ even}}}
(1-(m/\ell)^2)^{-1/2}
\leq C\ell\sqrt{\eps}.
\end{align*}
Writing
\begin{align*}
C[z_\ell]&M[\sigma_\text{\rm even}]C[z_\ell]
-
C[\omega^{(\eps)}_\ell]M[\sigma_\text{\rm even}]C[\omega^{(\eps)}_\ell]
\\
&=
C[z_\ell-\omega^{(\eps)}_\ell]M[\sigma_\text{\rm even}]C[z_\ell]
+
C[\omega^{(\eps)}_\ell]M[\sigma_\text{\rm even}]C[z_\ell-\omega_\ell^{(\eps)}]
\end{align*}
and using the triangle inequality for the trace norm and the Cauchy-Schwarz inequality in Schatten classes, we find
\begin{align*}
\norm{C[z_\ell]&M[\sigma_\text{\rm even}]C[z_\ell]
-
C[\omega^{(\eps)}_\ell]M[\sigma_\text{\rm even}]C[\omega^{(\eps)}_\ell]}_{\Sch_1}
\\
\leq&
\norm{C[z_\ell-\omega^{(\eps)}_\ell]}_{\Sch_2}
\norm{M[\sigma_\text{\rm even}]}
\norm{C[z_\ell]}_{\Sch_2}
\\
&+
\norm{C[\omega^{(\eps)}_\ell]}_{\Sch_2}
\norm{M[\sigma_\text{\rm even}]}
\norm{C[z_\ell-\omega^{(\eps)}]}_{\Sch_2}
\leq
C\ell\eps^{1/4}.
\end{align*}
From here using Proposition~\ref{prp.b5}, we find
\[
\Abs{
\Tr f(C[z_\ell]M[\sigma_\text{\rm even}]C[z_\ell])
-
\Tr f(C[\omega^{(\eps)}_\ell]M[\sigma_\text{\rm even}]C[\omega^{(\eps)}_\ell])
}
\leq
C\norm{f'}_{L^\infty}\ell\eps^{1/4}.
\]

\emph{Step 2: estimating $\limsup$ and $\liminf$.}
From the previous step, using  Proposition~\ref{prp.b4} with $\omega=\omega^{(\eps)}$, we find
\begin{align*}
\limsup_{\ell\to\infty}&
\frac{1}{\ell+1}\Tr f(C[z_\ell]M[\sigma_\text{\rm even}]C[z_\ell])
\\
&\leq
\limsup_{\ell\to\infty}
\frac{1}{\ell+1}\Tr f(C[\omega^{(\eps)}_\ell]M[\sigma_\text{\rm even}]C[\omega^{(\eps)}_\ell])
+
C\norm{f'}_{L^\infty}\eps^{1/4}
\\
&=
\frac1{4\pi}\int_{-\pi}^{\pi}
\int_{-1}^1 f\left(\sigma_\text{\rm even}(\varphi)
\abs{\omega^{(\eps)}(\xi)}^2
\right)d\xi\,  d\varphi
+
C\norm{f'}_{L^\infty}\eps^{1/4}
\end{align*}
and similarly
\begin{align*}
\liminf_{\ell\to\infty}&
\frac{1}{\ell+1}\Tr f(C[z_\ell]M[\sigma_\text{\rm even}]C[z_\ell])
\\
&\geq
\frac1{4\pi}\int_{-\pi}^{\pi}
\int_{-1}^1 f\left(\sigma_\text{\rm even}(\varphi)
\abs{\omega^{(\eps)}(\xi)}^2
\right)d\xi\,  d\varphi
-
C\norm{f'}_{L^\infty}\eps^{1/4}.
\end{align*}
Sending $\eps\to0$, we obtain the required statement.
\end{proof}

\subsection{Proof of Theorem~\ref{thm.b1}}\label{sec.3.10}
First let $f\in C^\infty(\bbR)$ be compactly supported so that $f(0)=0$.
By \eqref{b2}, it suffices to prove the relation
$$
\lim_{\ell\to\infty}
\frac{1}{\ell+1}\Tr f(C[x_\ell]M[\sigma]C[x_\ell])
=
\frac1{4\pi}\int_{-\pi}^{\pi}
\int_{-1}^1 f\left(\frac{4\sigma_\text{\rm even}(\varphi)}{\pi\sqrt{1-\xi^2}}\right)d\xi\,  d\varphi.
$$
First let us estimate the Hilbert-Schmidt norm of $C[x_\ell-z_\ell]$.
Using Lemma~\ref{lma.b2}, we find
\begin{align*}
\norm{C[x_\ell-z_\ell]}_{\Sch_2}^2
&=
\sum_{\genfrac{}{}{0pt}{1}{m=-\ell+2}{\text{$m-\ell$ even}}}^{\ell-2}
\abs{A_{\ell,m}-\frac{2}{\sqrt{\pi}}(1-(m/\ell)^2)^{-1/4}}^2
+2A_{\ell,\ell}^2
\\
&\leq
\sum_{\genfrac{}{}{0pt}{1}{m=-\ell+2}{\text{$m-\ell$ even}}}^{\ell-2}
\abs{A_{\ell,m}^2-\frac{4}{\pi}(1-(m/\ell)^2)^{-1/2}}
+2A_{\ell,\ell}^2
\\
&\leq
\sum_{\genfrac{}{}{0pt}{1}{m=-\ell+1}{\text{$m-\ell$ even}}}^{\ell-1}
\abs{A_{\ell,m}^2-\frac{4}{\pi}(1-(m/\ell)^2)^{-1/2}}
+2A_{\ell,\ell}^2
=O(\ell^{2/3}).
\end{align*}
From here, as in the previous proof, writing
\begin{align*}
C[x_\ell]M[\sigma]C[x_\ell]
-
C[z_\ell]M[\sigma]C[z_\ell]
\\
=
C[x_\ell-z_\ell]M[\sigma]C[x_\ell]
+
C[z_\ell]M[\sigma]C[x_\ell-z_\ell]
\end{align*}
and using the Cauchy-Schwarz for the Schatten norm, we find
$$
\norm{C[x_\ell]M[\sigma]C[x_\ell]
-
C[z_\ell]M[\sigma]C[z_\ell]}_{\Sch_1}
=O(\ell^{5/6}).
$$
From here and the previous theorem, using Proposition~\ref{prp.b5}, we obtain \eqref{eq.extra}.

\vspace{2mm}

Next let us consider the case $f(x)=x$. By the cyclicity of trace we find
\[
\Tr(C[x_\ell]M[\sigma]C[x_\ell])=\Tr(M[\sigma]C[x_\ell]^2)=\Tr(M[\sigma]C[y_\ell]).
\]
Now we can directly evaluate the trace by integrating the integral kernel of $M[\sigma]C[y_\ell]$ over the diagonal:
\[
\Tr(M[\sigma]C[y_\ell])
=
\frac1{2\pi}\int_{-\pi}^\pi\sigma(\varphi)y_\ell(0)d\varphi
=
\frac1{2\pi}\int_{-\pi}^\pi\sigma(\varphi)d\varphi
\sum_{\genfrac{}{}{0pt}{1}{m=-\ell}{\text{$m-\ell$ even}}}^\ell A_{\ell,m}^2.
\]
Using Lemma~\ref{lma.b2}, we find
\[
\sum_{\genfrac{}{}{0pt}{1}{m=-\ell}{\text{$m-\ell$ even}}}^\ell A_{\ell,m}^2
=
\frac{4}{\pi}
\sum_{\genfrac{}{}{0pt}{1}{m=-\ell}{\text{$m-\ell$ even}}}^\ell (1-(m/\ell)^2)^{-\frac12}
+O(\ell^{2/3})
=
\frac{2\ell}{\pi}\int_{-1}^1(1-\xi^2)^{-\frac12}d\xi+o(\ell)
\]
as $\ell\to\infty$. Putting this together, we obtain \eqref{eq.extra} for $f(x)=x$.
\qed

%%%%%%%%%%%%%%%%%%%%%%%%%%%%%%%%%%%%%%
%%%%%%%%%%%%%%%%%%%%%%%%%%%%%%%%%%%%%%

%%%%%%%%%%%%%%%%%%%%%%%%%%%%%%%%%%%%%%
%%%%%%%%%%%%%%%%%%%%%%%%%%%%%%%%%%%%%%
\section{The Birman-Schwinger principle and resolvent norm estimates}
\label{sec.cc}
%%%%%%%%%%%%%%%%%%%%%%%%%%%%%%%%%%%%%%
%%%%%%%%%%%%%%%%%%%%%%%%%%%%%%%%%%%%%%

\subsection{Definitions}
As in Section~\ref{sec.2.4}, $H[0]$ is the Neumann Laplacian; for typographical reasons, in this section we will write $H_0$ instead of $H[0]$.
As in Section~\ref{sec:Pl def}, we denote by ${\mathbf P}_\ell$ the orthogonal projection in $L^2(\Omega)$ onto the $(\ell+1)$-dimensional eigenspace of $H_0$ corresponding to the eigenvalue $\ell(\ell+1)$. We also denote by  ${\mathbf P}_\ell^\perp$ the projection onto the orthogonal complement to this eigenspace in $L^2(\Omega)$, so that ${\mathbf P}_\ell+{\mathbf P}_\ell^\perp=I$, the identity operator. Below we will make use of the orthogonal decomposition
$$
L^2(\Omega)=\Ran {\mathbf P}_\ell\oplus \Ran {\mathbf P}_\ell^\perp.
$$
Observe that the Neumann Laplacian $H_0$ is diagonal with respect to this decomposition.

For a fixed $\ell$, let $h_\ell^\perp[\sigma]$ be the restriction of the quadratic form $h[\sigma]$ onto the subspace $\Ran {\mathbf P}_\ell^\perp$, i.e.
\begin{equation}
h_\ell^\perp[\sigma](u)
=
\int_{\Omega}\abs{\nabla u}^2dx+\int_{-\pi}^\pi\sigma(\varphi)\abs{u(\pi/2,\varphi)}^2d\varphi,
\quad
u\in \Ran {\mathbf P}_\ell^\perp\cap W^{1}_{2}(\Omega).
\label{d1}
\end{equation}
We denote by $H_\ell^\perp[\sigma]$ the self-adjoint operator in $\Ran {\mathbf P}_\ell^\perp$, corresponding to this quadratic form.

Let
\begin{equation}
\label{eq:R(lambda) res def}
R(\lambda)=(H_0-\lambda I)^{-1}
\end{equation}
be the resolvent of the Neumann Laplacian. This resolvent is a meromorphic function of $\lambda$ with poles at $\lambda=\ell(\ell+1)$, $\ell=0,1,2,\dots$, and can be represented as the sum
\[
R(\lambda)=\sum_{\ell=0}^\infty \frac{{\mathbf P}_\ell}{\ell(\ell+1)-\lambda}
\]
convergent in the strong operator topology.
For a fixed $\ell$, we set
\begin{equation}
\label{eq:R res perp def}
R_\ell^\perp(\lambda)=\sum_{k\not=\ell}\frac{{\mathbf P}_k}{k(k+1)-\lambda}=R(\lambda)-\frac{{\mathbf P}_\ell}{\ell(\ell+1)-\lambda},
\end{equation}
i.e. we subtract from $R(\lambda)$ its singular part at $\lambda=\ell(\ell+1)$.

\subsection{Restrictions onto the boundary}
For any real $\lambda$ that is not an eigenvalue of $H_0$, we define $\abs{H_0-\lambda}^{-1/2}$ using the standard functional calculus for self-adjoint operators, and set
\[
(H_0-\lambda)^{-1/2}:=\abs{H_0-\lambda}^{-1/2}\sign(H_0-\lambda);
\]
then
$$
(H_0-\lambda)^{-1}=(H_0-\lambda)^{-1/2}\abs{H_0-\lambda}^{-1/2}.
$$
Observe that $\abs{H_0-\lambda}^{-1/2}$ and $(H_0-\lambda)^{-1/2}$ map $L^2(\Omega)$ to the Sobolev class $W^1_2(\Omega)$. Next, let $T: W^1_2(\Omega)\to L^2(\partial\Omega)$ be the operator of restriction onto the boundary (also known as the trace operator). This operator is bounded and compact, see e.g. \cite[Theorem 2.6.2]{Necas}. It follows that the operators
$$
T\abs{H_0-\lambda}^{-1/2}, \quad T(H_0-\lambda)^{-1/2}: L^2(\Omega)\to L^2(\partial\Omega)
$$
are bounded and compact.

Below we use the restrictions of the operators $R(\lambda)$ and $R_\ell^\perp(\lambda)$ in \eqref{eq:R(lambda) res def} and \eqref{eq:R res perp def}
respectively onto $L^2(\partial\Omega)$. We denote these restrictions by $\wt R(\lambda)$ and $\wt R_\ell^\perp(\lambda)$ respectively. Heuristically, these operators are defined by restricting the integral kernels onto the boundary. In order to define them rigorously, we use the above operator $T$ as follows:
\begin{align*}
\wt R(\lambda)&=T\abs{H_0-\lambda}^{-1/2}(T(H_0-\lambda)^{-1/2})^*,
\\
\wt R_\ell^\perp(\lambda)&=T\abs{H_0-\lambda}^{-1/2}{\mathbf P}_\ell^\perp(T(H_0-\lambda)^{-1/2})^*.
\end{align*}
Finally, we will also use the operator $\wt{\mathbf P}_\ell$ in $L^2(\partial\Omega)$, which is obtained by restricting the integral kernel of ${\mathbf P}_\ell$ onto the equator. In this case, the integral kernel is infinitely smooth and the definition is straightforward. With these definitions, we have
\begin{equation}
\label{eq:R perp tilde sum}
\wt R(\lambda)=\sum_{k=0}^\infty\frac{\wt{\mathbf P}_k}{k(k+1)-\lambda},
\quad
\wt R_\ell^\perp(\lambda)=\sum_{k\not=\ell}\frac{\wt{\mathbf P}_k}{k(k+1)-\lambda}.
\end{equation}

\subsection{The Birman-Schwinger principle}
Below $\sigma_0$ is a non-zero constant.
\begin{proposition}
Let $\lambda\in\bbR\setminus\{\ell(\ell+1): \ell\geq0\}$. Then
\begin{equation}
\dim\Ker(H[\sigma_0]-\lambda I)
=
\dim\Ker(I+\sigma_0\wt R(\lambda)).
\label{c2a}
\end{equation}
Similarly,
\begin{equation}
\dim\Ker(H_\ell^\perp[\sigma_0]-\lambda I)
=
\dim\Ker(I+\sigma_0\wt R_\ell^\perp(\lambda)).
\label{c2}
\end{equation}
\end{proposition}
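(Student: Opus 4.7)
This is a boundary-form version of the classical Birman--Schwinger principle; the plan is to construct explicit, mutually inverse linear bijections between the two kernels. Throughout, I will use that $\sigma_0 T^{*}T$ is a relatively form-compact perturbation of $H_0$ (as noted in Section~\ref{sec.2.4}), so that the quadratic form definition of $H[\sigma_0]$ yields the weak Euler--Lagrange equation
\[
\int_{\Omega}\nabla u\cdot\overline{\nabla v}\, dx+\sigma_0\langle Tu,Tv\rangle_{L^2(\partial\Omega)}=\lambda\langle u,v\rangle_{L^2(\Omega)},\qquad\forall v\in W^1_2(\Omega),
\]
for any $u\in\Ker(H[\sigma_0]-\lambda I)$.

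First I will prove \eqref{c2a} by exhibiting the forward map $u\mapsto Tu$. Given $u$ in the left-hand kernel, the identity above reads $(H_0-\lambda)u=-\sigma_0 T^{*}Tu$ in the sense of the dual pairing on $W^1_2(\Omega)$. Since $\lambda\notin\spec(H_0)$, inversion gives $u=-\sigma_0 R(\lambda)T^{*}Tu$; here I will justify that $R(\lambda)T^{*}$ is well-defined as a bounded operator from $L^2(\partial\Omega)$ to $W^1_2(\Omega)$ by using the factorisation $R(\lambda)T^{*}=(H_0-\lambda)^{-1/2}\bigl(|H_0-\lambda|^{-1/2}T^{*}\bigr)$ together with the Sobolev trace bound \eqref{b10}, which (by duality) says exactly that $|H_0-\lambda|^{-1/2}T^{*}$ maps $L^2(\partial\Omega)$ into $L^2(\Omega)$. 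Applying $T$ to both sides of the identity for $u$ and using $TR(\lambda)T^{*}=\wt R(\lambda)$ yields $(I+\sigma_0\wt R(\lambda))Tu=0$. Injectivity of $u\mapsto Tu$ on the eigenspace follows immediately: if $Tu=0$, the Euler--Lagrange identity gives $(H_0-\lambda)u=0$, and since $\lambda$ is not a Neumann eigenvalue, $u=0$.

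For surjectivity I will construct the inverse. Given $\phi\in\Ker(I+\sigma_0\wt R(\lambda))$, set $u:=-\sigma_0 R(\lambda)T^{*}\phi\in W^1_2(\Omega)$ (well-defined as above). Applying $T$ and using the defining relation on $\phi$ gives $Tu=-\sigma_0\wt R(\lambda)\phi=\phi$; in particular $u\ne0$ whenever $\phi\ne0$. Then for arbitrary $v\in W^1_2(\Omega)$,
\[
\int_{\Omega}\nabla u\cdot\overline{\nabla v}\, dx-\lambda\langle u,v\rangle=\langle(H_0-\lambda)u,v\rangle=-\sigma_0\langle T^{*}\phi,v\rangle=-\sigma_0\langle\phi,Tv\rangle=-\sigma_0\langle Tu,Tv\rangle,
\]
which is exactly the weak eigenvalue equation for $H[\sigma_0]$ at $\lambda$. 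Hence $u\in\Ker(H[\sigma_0]-\lambda I)$, and a direct check shows the two constructed maps are mutual inverses, establishing \eqref{c2a}.

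The identity \eqref{c2} is proved by the same scheme, working on the reducing subspace $\Ran\mathbf P_\ell^{\perp}$: the Neumann Laplacian restricted to this subspace has spectrum $\{k(k+1):k\ne\ell\}$, which still avoids $\lambda$ by hypothesis, so its resolvent is exactly $R_\ell^{\perp}(\lambda)$, and $TR_\ell^{\perp}(\lambda)T^{*}=\wt R_\ell^{\perp}(\lambda)$ from \eqref{eq:R perp tilde sum}. Repeating the two constructions above with $R(\lambda)$ replaced by $R_\ell^{\perp}(\lambda)$ gives the claim. The main subtlety I expect is the one already flagged: the operator $T^{*}$ lands in the dual of $W^1_2(\Omega)$ rather than $L^2(\Omega)$, so every appearance of $R(\lambda)T^{*}$ must be interpreted through the factorisation via $|H_0-\lambda|^{-1/2}T^{*}$ before any Hilbert-space manipulation; once this is set up cleanly, both directions are routine.
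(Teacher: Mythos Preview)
Your argument is correct. Both your proof and the paper's rely on the same factorisation $R(\lambda)=|H_0-\lambda|^{-1/2}(H_0-\lambda)^{-1/2}$ and the boundedness of $T|H_0-\lambda|^{-1/2}$, but the organisation differs. The paper conjugates $H[\sigma_0]-\lambda$ by the square-root isomorphisms to obtain an operator of the form $I+\sigma_0 K_1K_2$ on $L^2(\Omega)$ with $K_1=(T|H_0-\lambda|^{-1/2})^*$ and $K_2=T(H_0-\lambda)^{-1/2}$, and then invokes the abstract identity $\dim\Ker(I+K_1K_2)=\dim\Ker(I+K_2K_1)$ for compact operators; recognising $K_2K_1=\wt R(\lambda)$ finishes the argument in one line. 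Your route instead unpacks that identity by exhibiting the explicit mutually inverse bijections $u\mapsto Tu$ and $\phi\mapsto-\sigma_0 R(\lambda)T^*\phi$ and verifying the weak eigenvalue equation directly. The paper's version is shorter and hides the analytic bookkeeping inside a general lemma; yours is more self-contained and makes transparent exactly which eigenfunction corresponds to which boundary vector, which can be useful if one later wants quantitative information about the eigenfunctions themselves.
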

\begin{proof}
Let us prove \eqref{c2a}.
As both $\abs{H_0-\lambda}^{-1/2}$ and $(H_0-\lambda)^{-1/2}$ are linear isomorphisms,
we have
\begin{align*}
\dim\Ker(H[\sigma_0]-\lambda I)
&=
\dim\Ker\biggl(\abs{H_0-\lambda}^{-1/2}(H[\sigma_0]-\lambda I)(H_0-\lambda)^{-1/2}\biggr).
%&=\dim\Ker\biggl(I+\sigma_0(T\abs{H_0-\lambda}^{-1/2})^*(T(H_0-\lambda)^{-1/2})\biggr).
\end{align*}
We recall that the quadratic form of $H[\sigma_0]$ is
\begin{align*}
h[\sigma_0](u)&=
\int_{\Omega}\abs{\nabla u}^2dx+\sigma_0\int_{-\pi}^\pi\abs{u(\pi/2,\varphi)}^2d\varphi
\\
&=h[0](u)+\sigma_0\int_{-\pi}^\pi\abs{u(\pi/2,\varphi)}^2d\varphi,
\quad u\in W^1_2(\Omega).
\end{align*}
From here we obtain
$$
\abs{H_0-\lambda}^{-1/2}(H[\sigma_0]-\lambda I)(H_0-\lambda)^{-1/2}
=
I+\sigma_0(T\abs{H_0-\lambda}^{-1/2})^*(T(H_0-\lambda)^{-1/2}).
$$
Using the identity
$$
\dim\Ker(I+K_1K_2)=\dim\Ker(I+K_2K_1)
$$
for any two compact operators $K_1$, $K_2$, we find
\begin{align*}
\dim\Ker(H[\sigma_0]-\lambda I)
&=
\dim\Ker\biggl(I+\sigma_0 (T\abs{H_0-\lambda}^{-1/2})^*T(H_0-\lambda)^{-1/2}\biggr)
\\
&=
\dim\Ker\biggl(I+\sigma_0 T(H_0-\lambda)^{-1/2}(T\abs{H_0-\lambda}^{-1/2})^*\biggr)
\\
&=\dim\Ker(I+\sigma_0\wt R(\lambda)),
\end{align*}
which proves \eqref{c2a}.
The proof of \eqref{c2} follows along the same steps, with the only change that instead of $H_0$ one needs to consider the restriction of $H_0$ onto the range of ${\mathbf P}_\ell^\perp$.

\end{proof}

\subsection{Norm estimate for $\wt R_\ell^\perp(\lambda)$}

\begin{lemma}\label{lma.cc1}
We have
$$
\sup_{\lambda\in[\ell^2,(\ell+1)^2]}\norm{\wt R_\ell^\perp(\lambda)}=O(\ell^{-1/2}\log \ell),\quad \ell\to\infty.
$$
\end{lemma}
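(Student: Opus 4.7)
The plan is to exploit the fact that $\wt R_\ell^\perp(\lambda)$ is a convolution operator on $L^2(\partial\Omega)$: indeed, by \eqref{eq:restr equator Alm1}--\eqref{eq:restr equator Alm2}, the kernel of $\wt{\mathbf P}_k$ has the diagonalised form
\[
\wt{\mathbf P}_k(\varphi,\varphi')=\frac{1}{2\pi}\sum_{\genfrac{}{}{0pt}{1}{\abs{m}\le k}{k-m\text{ even}}}A_{k,m}^2 e^{im(\varphi-\varphi')},
\]
so $\wt{\mathbf P}_k$ acts on the Fourier mode $e^{im\varphi}$ by multiplication by $A_{k,m}^2$ (with the convention $A_{k,m}=0$ unless $k-m$ is even and $\abs{m}\le k$). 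Summing over $k\ne\ell$ via \eqref{eq:R perp tilde sum}, the operator $\wt R_\ell^\perp(\lambda)$ is the Fourier multiplier with symbol
\[
\mu_m(\lambda)=\sum_{k\ne\ell}\frac{A_{k,m}^2}{k(k+1)-\lambda},\qquad m\in\bbZ,
\]
and therefore $\norm{\wt R_\ell^\perp(\lambda)}=\sup_{m\in\bbZ}\abs{\mu_m(\lambda)}$. The task reduces to an $m$-uniform scalar estimate.

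The first ingredient I would establish is the elementary denominator bound
\[
\abs{k(k+1)-\lambda}\ge \tfrac12\abs{k-\ell}(k+\ell+1),\qquad k\in\bbN,\ k\ne\ell,
\]
valid for every $\lambda\in[\ell^2,(\ell+1)^2]$. Setting $s=\lambda-\ell(\ell+1)\in[-\ell,\ell+1]$ and using $k(k+1)-\lambda=(k-\ell)(k+\ell+1)-s$, both the cases $\abs{k-\ell}=1$ and $\abs{k-\ell}\ge 2$ are handled by the inequality $\abs{s}\le\ell+1\le k+\ell+1$. The second ingredient is the crude universal bound $A_{k,m}^2\le C\sqrt{k}$ (valid for all $k\ge 1$ and $\abs{m}\le k$), which follows from \eqref{b5a} applied with $k$ in place of $\ell$. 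Dropping the summation constraints $k-m$ even and $k\ge\abs{m}$ then yields the $m$-independent majorant
\[
\abs{\mu_m(\lambda)}\le C\sum_{\genfrac{}{}{0pt}{1}{k\ge 0}{k\ne\ell}}\frac{\sqrt{k}}{\abs{k-\ell}(k+\ell+1)}.
\]

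The final step is a direct summation, split into three regimes. For $0\le k\le \ell/2$ both $\abs{k-\ell}$ and $k+\ell+1$ are comparable to $\ell$, so the contribution is bounded by $C\ell^{-2}\int_0^{\ell/2}\sqrt{k}\,dk=O(\ell^{-1/2})$. For $k\ge 2\ell$ both factors in the denominator are comparable to $k$, so the tail is bounded by $C\int_{2\ell}^\infty k^{-3/2}\,dk=O(\ell^{-1/2})$. For the intermediate regime $\ell/2\le k\le 2\ell$ with $k\ne\ell$, one has $\sqrt k\asymp\sqrt\ell$ and $k+\ell+1\asymp\ell$, giving
\[
\frac{C}{\sqrt\ell}\sum_{1\le \abs{j}\le\ell}\frac{1}{\abs{j}}=O\!\left(\frac{\log\ell}{\sqrt\ell}\right),
\]
which is the dominant term and accounts for the logarithm in the statement. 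Combining the three regimes yields $\sup_m\abs{\mu_m(\lambda)}=O(\ell^{-1/2}\log\ell)$ uniformly for $\lambda\in[\ell^2,(\ell+1)^2]$, which is the claim.

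The only slightly delicate point is the denominator bound when $\abs{k-\ell}=1$, where $(k-\ell)(k+\ell+1)$ and $s$ can have comparable magnitudes; everything else follows by routine summation and uses only the crude pointwise bound on $A_{k,m}^2$. Notably, the finer asymptotics $A_{k,m}^2\asymp(1-(m/k)^2)^{-1/2}$ from Lemma~\ref{lma.b2} are not needed — the pleasant feature here is that the universal $\sqrt{k}$ bound suffices because the resolvent singularity at $k=\ell$ is already tamed by $1/\abs{k-\ell}$.
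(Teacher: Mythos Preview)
Your proof is correct and follows essentially the same route as the paper's: identify $\wt R_\ell^\perp(\lambda)$ as a Fourier multiplier with symbol $\mu_m(\lambda)=\sum_{k\ne\ell}A_{k,m}^2/(k(k+1)-\lambda)$, invoke the crude bound $A_{k,m}^2\le C\sqrt{k}$ from \eqref{b5a}, and reduce to the $m$-independent scalar sum $\sum_{k\ne\ell}\sqrt{k}/\abs{k(k+1)-\lambda}$. The only cosmetic difference is in how that sum is estimated: the paper splits into $k\le\ell-1$ and $k\ge\ell+1$, uses the endpoint values $\lambda=\ell^2$ and $\lambda=(\ell+1)^2$ to bound the denominators, and then does an integral comparison with the substitution $x=\ell y$; you instead first prove the uniform factorised lower bound $\abs{k(k+1)-\lambda}\ge\tfrac12\abs{k-\ell}(k+\ell+1)$ and then split into the three regimes $k\le\ell/2$, $\ell/2\le k\le2\ell$, $k\ge2\ell$. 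Your denominator bound is a bit cleaner and makes the harmonic-sum origin of the $\log\ell$ transparent, but in substance the two arguments are the same.
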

\begin{proof}
We use the second identity in \eqref{eq:R perp tilde sum}.
Recall that the trigonometric polynomial $y_{\ell}$ is defined in \eqref{eq:y ell def}.
Then, by the definition \eqref{b11a} of $\mathbf{P}_{\ell}$ as an integral operator, and
by the definition of $\widetilde{\mathbf{P}}_{\ell}$ as its restriction onto the boundary, we see that $\widetilde{\mathbf{P}}_{\ell}$ is the convolution operator
$$\widetilde{\mathbf{P}}_{\ell} = C[y_{\ell}],$$ as in \eqref{eq:C[a] conv op def}.
Hence, by the linearity, the operator $\widetilde{R}_{\ell}^{\perp}(\lambda)$ in \eqref{eq:R perp tilde sum} is the convolution operator
$$\widetilde{R}_{\ell}^{\perp}(\lambda)=C[F],$$ on the unit circle, with the function
$$F(\varphi)=F_{\ell;\lambda}(\varphi) = \sum\limits_{k\ne \ell}\frac{1}{k(k+1)-\lambda}y_{k}(\varphi).$$

The Fourier coefficients of $F$ are the numbers
\begin{equation*}
a_{m}=a_{\ell;m}:= \sum\limits_{k\ne \ell}\frac{1}{k(k+1)-\lambda}A_{k,m}^{2},
\end{equation*}
with $A_{\ell,m}$ as in \eqref{eq:restr equator Alm2} (treated in Lemma \ref{lma.b2}); we recall that by our convention, $A_{\ell,m}=0$ unless $\ell-m$ is even and $\abs{m}\leq \ell$. 
Since the operator norm of $C[F]$ is the supremum of the Fourier coefficients of $F$, we have
\begin{equation*}
\norm{\wt R_\ell^\perp(\lambda)} = \sup\limits_{m\in \Z}|a_{m}| =
\sup\limits_{m\in\Z}\left| \sum\limits_{k\ne \ell}\frac{1}{k(k+1)-\lambda}A_{k,m}^{2} \right|,
\end{equation*}
that is to be bounded, uniformly w.r.t. $\lambda\in[\ell^2,(\ell+1)^2]$.

To bound the numbers $a_{m}$ we employ a strategy similar to the proof of Lemma \ref{lma.b1}.
At first we use the uniform upper bound \eqref{b5a} of Lemma \ref{lma.b2}, together with the triangle inequality, to yield
\[
|a_{m}| \le C\sum\limits_{k\ne \ell}\frac{\sqrt{k}}{|k(k+1)-\lambda|}.
\]
Now the result follows from the next lemma.
\end{proof}

\begin{lemma}
We have
$$
\sup_{\lambda\in[\ell^2,(\ell+1)^2]}\sum\limits_{k\ne \ell}\frac{\sqrt{k}}{|k(k+1)-\lambda|}
=
O(\ell^{-1/2}\log \ell),\quad \ell\to\infty.
$$
\end{lemma}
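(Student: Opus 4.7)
The plan is to split the sum according to the size of $k-\ell$, using the factorisation
\[
k(k+1) - \lambda = (k-\ell)(k+\ell+1) - \mu, \qquad \mu := \lambda - \ell(\ell+1),
\]
combined with the observation that the constraint $\lambda \in [\ell^2,(\ell+1)^2]$ forces $|\mu| \leq \ell+1$ uniformly.

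First I would dispose of the two tails. For $1 \leq k \leq \ell/2$ the product $|(k-\ell)(k+\ell+1)|$ is at least $(\ell/2)(\ell+1)$, which dominates $|\mu|$, so $|k(k+1)-\lambda| \geq c\,\ell^2$, and the contribution is bounded by $C\ell^{-2}\sum_{k \leq \ell/2}\sqrt{k} = O(\ell^{-1/2})$. For $k \geq 2\ell$ the estimate $|(k-\ell)(k+\ell+1)| \geq (k/2)\cdot k$ gives $|k(k+1)-\lambda| \geq c\,k^2$, whence $\sum_{k \geq 2\ell}\sqrt{k}/k^2 = O(\ell^{-1/2})$.

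The main obstacle is the central range $\ell/2 < k < 2\ell$, $k \neq \ell$. Writing $k = \ell + j$ with $1 \leq |j| \leq \ell$, a short case analysis will show
\[
\bigl|(k-\ell)(k+\ell+1) - \mu\bigr| \geq c\,\ell\,|j|
\]
for all such $j$ and all $\mu$ with $|\mu| \leq \ell+1$, once $\ell$ is large enough. The delicate cases are $j = \pm 1$: there $|(k-\ell)(k+\ell+1)|$ is only $2\ell$ or $2\ell+2$, \emph{comparable} to $|\mu|$ itself, and direct computation gives $|k(k+1)-\lambda| \geq \ell$ — so the hypothesis $|\mu|\leq \ell+1$ is being used essentially sharply. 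Since $\sqrt{k} \leq \sqrt{2\ell}$ throughout this range, the central contribution is bounded by
\[
\frac{C}{\sqrt{\ell}}\sum_{j=1}^{\ell}\frac{1}{j} = O\bigl(\ell^{-1/2}\log\ell\bigr),
\]
which is precisely where the logarithmic factor originates. Combining the three regimes yields the stated uniform bound, and no term demands better than the harmonic-sum estimate, so the $\log\ell$ appears to be intrinsic to this approach.
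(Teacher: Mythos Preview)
Your argument is correct. The factorisation $k(k+1)-\lambda=(k-\ell)(k+\ell+1)-\mu$ with $|\mu|\leq\ell+1$ is the right tool, and the case check for $j=\pm1$ is handled properly: at $k=\ell+1$ one has $k(k+1)-\lambda\geq(\ell+1)(\ell+2)-(\ell+1)^2=\ell+1$, and at $k=\ell-1$ one has $\lambda-k(k+1)\geq\ell^2-(\ell^2-\ell)=\ell$, so the lower bound $c\,\ell\,|j|$ indeed holds throughout the central window. The two tails are dealt with cleanly.

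The paper organises the computation differently. Instead of a three-region split, it simply separates $k\leq\ell-1$ from $k\geq\ell+1$, replaces $\lambda$ by the nearest endpoint of $[\ell^2,(\ell+1)^2]$ in each case, and then compares each resulting sum to an integral; after the change of variable $x=\ell y$ the logarithm emerges from integrals of the form $\int_0^{1-1/\ell}\frac{\sqrt{y}}{1-y^2}\,dy$ and $\int_{1+1/\ell}^\infty\frac{dy}{\sqrt{y}(y-1)}$. Your decomposition is a bit more structural: by isolating the central band $|k-\ell|\leq\ell$ and invoking the factorisation directly, you make it transparent that the $\log\ell$ arises precisely from the harmonic sum $\sum_{j=1}^{\ell}1/j$, with the tails contributing only $O(\ell^{-1/2})$. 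The paper's integral-comparison route is slightly more mechanical and avoids the separate case analysis at $j=\pm1$; your route makes the mechanism more visible. Either way the content is the same.
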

\begin{proof}
We consider separately the sums over $k\leq\ell-1$ and over $k\geq\ell+1$. For the first sum we have
$$
\sum\limits_{k=1}^{\ell-1}\frac{\sqrt{k}}{|k(k+1)-\lambda|}
\leq
\sum\limits_{k=1}^{\ell-1}\frac{\sqrt{k}}{\ell^2-k(k+1)}.
$$
Clearly, every term in the series here is $O(\ell^{-1/2})$. Thus, we can write
\begin{align*}
\sum\limits_{k=1}^{\ell-1}\frac{\sqrt{k}}{\ell^2-k(k+1)}
&=
O(\ell^{-1/2})+\sum\limits_{k=1}^{\ell-3}\frac{\sqrt{k}}{\ell^2-k(k+1)}
\\
&\leq
O(\ell^{-1/2})+\int_1^{\ell-2}\frac{\sqrt{x}}{\ell^2-x(x+1)}dx.
\end{align*}
For the integral in the right hand side we find
\begin{align*}
\int_1^{\ell-2}\frac{\sqrt{x}}{\ell^2-x(x+1)}dx
&\leq
\int_1^{\ell-2}\frac{\sqrt{x+1}}{\ell^2-(x+1)^2}dx
\\
&=
\int_2^{\ell-1}\frac{\sqrt{x}}{\ell^2-x^2}dx
\leq
\int_0^{\ell-1}\frac{\sqrt{x}}{\ell^2-x^2}dx.
\end{align*}
By the change of variable $x=\ell y$ we finally obtain
$$
\int_0^{\ell-1}\frac{\sqrt{x}}{\ell^2-x^2}dx
=
\ell^{-1/2}\int_0^{1-1/\ell}\frac{\sqrt{y}}{1-y^2}dy
=
O(\ell^{-1/2}\log \ell)
$$
as $\ell\to\infty$.

Similarly, for the second sum we find
$$
\sum\limits_{k=\ell+1}^{\infty}\frac{\sqrt{k}}{|k(k+1)-\lambda|}
\leq
\sum\limits_{k=\ell+1}^{\infty}\frac{\sqrt{k}}{k(k+1)-(\ell+1)^{2}}.
$$
For the denominator, we have
$$
k(k+1)-(\ell+1)^{2}=(k-\ell)k + (\ell+1)(k-(\ell+1))\ge (k-\ell)k,
$$
and so
\begin{align*}
\sum\limits_{k=\ell+1}^{\infty}\frac{\sqrt{k}}{k(k+1)-(\ell+1)^2}
&\leq
\sum\limits_{k=\ell+1}^{\infty}\frac{1}{\sqrt{k}(k-\ell)}
=
\frac1{\sqrt{\ell+1}}
+
\sum\limits_{k=\ell+2}^{\infty}\frac{1}{\sqrt{k}(k-\ell)}
\\
&\leq\frac1{\sqrt{\ell+1}}
+
\int_{\ell+1}^\infty \frac{dx}{\sqrt{x}(x-\ell)}
\\
&\leq\frac1{\sqrt{\ell+1}}
+\ell^{-1/2}
\int_{1+1/\ell}^{\infty}\frac{dy}{\sqrt{y}(y-1)}
=
O(\ell^{-1/2}\log\ell),
\end{align*}
as required.
\end{proof}

%%%%%%%%%%%%%%%%%%%%%%%%%%%%%%%%%%%%%%
%%%%%%%%%%%%%%%%%%%%%%%%%%%%%%%%%%%%%%
\section{Proof of Theorem~\ref{thm.d2}}
\label{sec.d}
%%%%%%%%%%%%%%%%%%%%%%%%%%%%%%%%%%%%%%
%%%%%%%%%%%%%%%%%%%%%%%%%%%%%%%%%%%%%%

\subsection{Eigenvalues of $H_\ell^\perp[\sigma]$}
We recall that the operator $H_\ell^\perp[\sigma]$ was defined in the beginning of the previous section.

\begin{lemma}\label{lma.d3}
Let $\sigma_0$ be a positive number.
Then for all sufficiently large $\ell$ and all continuous functions $\sigma$ satisfying $-\sigma_0\leq \sigma \leq \sigma_0$, the operators $H_\ell^\perp[\sigma]$ have no eigenvalues in the interval $\lambda\in[\ell^2,(\ell+1)^2]$. Furthermore, the number of eigenvalues of each operator $H_\ell^\perp[\sigma]$ in the interval $(-\infty,\ell(\ell+1))$ is $L=\ell(\ell+1)/2$.
\end{lemma}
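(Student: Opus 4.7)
The plan is to combine the Birman-Schwinger principle \eqref{c2} with the resolvent estimate $\norm{\wt R_\ell^\perp(\lambda)}=O(\ell^{-1/2}\log \ell)$ just established, and then to invoke the variational principle plus a continuity argument to transfer conclusions from constant $\pm\sigma_0$ to variable $\sigma$ with $|\sigma|\leq\sigma_0$.

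First I would apply \eqref{c2} with the constants $\pm\sigma_0$ in place of $\sigma_0$. The norm bound gives $|\sigma_0|\cdot\norm{\wt R_\ell^\perp(\lambda)}<1$ uniformly for $\lambda\in[\ell^2,(\ell+1)^2]$ once $\ell$ is large enough, so $I\pm\sigma_0\wt R_\ell^\perp(\lambda)$ is invertible by a Neumann series, and \eqref{c2} implies $H_\ell^\perp[\pm\sigma_0]$ has no eigenvalue in $[\ell^2,(\ell+1)^2]$. (The value $\lambda=\ell(\ell+1)$ is formally excluded from the hypothesis of \eqref{c2}, but on $\Ran{\mathbf P}_\ell^\perp$ it is not in the spectrum of $H_0$, so the proof of \eqref{c2} goes through verbatim; alternatively, the continuity argument below handles this point automatically.) Next, for any continuous $\sigma$ with $-\sigma_0\leq\sigma\leq\sigma_0$, the pointwise inequality yields the form inequality
\[
h_\ell^\perp[-\sigma_0](u)\leq h_\ell^\perp[\sigma](u)\leq h_\ell^\perp[\sigma_0](u),\quad u\in\Ran{\mathbf P}_\ell^\perp\cap W^1_2(\Omega),
\]
so by the variational principle recalled in Section~\ref{sec.var},
\[
\lambda_n(H_\ell^\perp[-\sigma_0])\leq\lambda_n(H_\ell^\perp[\sigma])\leq\lambda_n(H_\ell^\perp[\sigma_0])
\]
for every $n$.

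To obtain the count, I would anchor at $\sigma=0$: the operator $H_\ell^\perp[0]$ is the Neumann Laplacian restricted to $\Ran{\mathbf P}_\ell^\perp$, with spectrum $\{k(k+1):k\geq 0,\, k\neq\ell\}$ and multiplicities $k+1$. None of these numbers lies in $[\ell^2,(\ell+1)^2]$ for $\ell\geq 1$, and the total count strictly below $\ell(\ell+1)$ is exactly $\sum_{k=0}^{\ell-1}(k+1)=L$. Now deform along the path $t\mapsto H_\ell^\perp[t\sigma_0]$ for $t\in[-1,1]$; since this is $H_\ell^\perp[0]$ plus a relatively form-compact perturbation depending continuously on $t$, each eigenvalue moves continuously in $t$. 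The argument of the previous paragraph applies verbatim with $t\sigma_0$ (as $|t\sigma_0|\leq\sigma_0$), so no eigenvalue ever enters $[\ell^2,(\ell+1)^2]$ as $t$ varies. Hence the integer count of eigenvalues of $H_\ell^\perp[t\sigma_0]$ below $\ell^2$ is locally constant, and therefore equal to $L$ for every $t\in[-1,1]$.

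Combining this with the variational sandwich of Step 2 finishes the proof. Indeed, taking the endpoints $t=\pm1$, one has $\lambda_L(H_\ell^\perp[\sigma_0])<\ell^2$ and $\lambda_{L+1}(H_\ell^\perp[-\sigma_0])>(\ell+1)^2$, so the sandwich forces $\lambda_L(H_\ell^\perp[\sigma])<\ell^2$ and $\lambda_{L+1}(H_\ell^\perp[\sigma])>(\ell+1)^2$, i.e.\ $H_\ell^\perp[\sigma]$ has exactly $L$ eigenvalues below $\ell(\ell+1)$ and none in $[\ell^2,(\ell+1)^2]$. The main nuisance I foresee is the correct bookkeeping around $\lambda=\ell(\ell+1)$ and a clean statement of eigenvalue continuity under form-compact perturbation; both are standard once the resolvent estimate is in hand, so the real content of the lemma is the norm bound on $\wt R_\ell^\perp(\lambda)$ already proved in the previous subsection.
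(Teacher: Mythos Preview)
Your proposal is correct and follows essentially the same approach as the paper: Birman--Schwinger combined with the resolvent norm bound handles the constant case $\sigma=t\sigma_0$, a continuity-in-$t$ argument anchored at $t=0$ gives the count $L$, and the variational sandwich $H_\ell^\perp[-\sigma_0]\leq H_\ell^\perp[\sigma]\leq H_\ell^\perp[\sigma_0]$ transfers both conclusions to variable $\sigma$. The order of exposition differs slightly (the paper treats the full constant-$\sigma$ case first, including the count, before passing to variable $\sigma$), but the substance is identical.
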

\begin{proof}
First consider the case of constant $\sigma$; let $\sigma=t\sigma_0$, $t\in[-1,1]$.
We use the Birman-Schwinger principle \eqref{c2}. By Lemma~\ref{lma.cc1} for any $\lambda\in [\ell^2,(\ell+1)^2]$ we have
$$
\norm{\sigma_0 \wt R_\ell^\perp(\lambda)}<1
$$
for all sufficiently large $\ell$. It follows that the kernel of $I+t\sigma_0\wt R_\ell^\perp(\lambda)$ is trivial, and therefore $H_\ell^\perp[t\sigma_0]$ has no eigenvalues in $[\ell^{2},(\ell+1)^{2}]$ for all sufficiently large $\ell$.

Let us fix such sufficiently large $\ell$. The eigenvalues of $H_\ell^\perp[t\sigma_0]$ depend continuously on $t$ and do not enter the interval $[\ell^2,(\ell+1)^2]$ as $t$ varies from $-1$ to $1$. This means that the number of these eigenvalues below $\ell^{2}$ is independent on $t$. For $t=0$ these are the Neumann eigenvalues and we can count them explicitly: each Neumann eigenvalue $k(k+1)$, $k<\ell$, contributes multiplicity $k+1$; altogether we have $\ell(\ell+1)/2$ eigenvalues below $\ell^{2}$.

Now consider the case of a variable $\sigma$. By the assumption $-\sigma_0\leq \sigma\leq \sigma_0$, we find (see definition \eqref{d1})
$$
H_\ell^\perp[-\sigma_0]\leq H_\ell^\perp[\sigma]\leq H_\ell^\perp[\sigma_0].
$$
Denoting the eigenvalues of $H_\ell^\perp[\sigma]$ by $\lambda_n^\perp(\sigma)$, by the variational principle (see Section~\ref{sec.var}) we find
$$
\lambda_n^\perp(-\sigma_0)\leq \lambda_n^\perp(\sigma)\leq \lambda_n^\perp(\sigma_0)
$$
for all $n$. By the first part of the proof, we have, with $L=\ell(\ell+1)/2$,
$$
\lambda_L^\perp(\sigma_0)<\ell^{2} \quad\text{ and }\quad
\lambda_{L+1}^\perp(-\sigma_0)>(\ell+1)^{2}.
$$
From here we find
$$
\lambda_L^\perp(\sigma)<\ell^{2},
\quad\text{ and }\quad
\lambda_{L+1}^\perp(\sigma)>(\ell+1)^{2},
$$
as required.
\end{proof}

\subsection{A variational lemma}
It is convenient to express the first step of the proof of Theorem~\ref{thm.d2} as a separate lemma.
Recall that we denote $\sigma_\pm=\sigma\pm\eps\abs{\sigma}$.
\begin{lemma}\label{lma.var}
For any $\ell>0$ and any $\eps>0$, we have the inequalities
$$
H_\ell^-[\sigma]\leq H[\sigma]\leq H_\ell^+[\sigma],
$$
where the operators $H_\ell^-[\sigma]$ and $H_\ell^+[\sigma]$ are defined by the sums
\begin{align*}
H_\ell^+[\sigma]&=\biggl(\ell(\ell+1)I+V_\ell[\sigma_+]\biggr)\oplus H_\ell^\perp[\sigma+\tfrac1\eps\abs{\sigma}],
\\
H_\ell^-[\sigma]&=\biggl(\ell(\ell+1)I+V_\ell[\sigma_-]\biggr)\oplus H_\ell^\perp[\sigma-\tfrac1\eps\abs{\sigma}]
\end{align*}
in the orthogonal decomposition
$$
L^2(\Omega)=\Ran {\mathbf P}_\ell\oplus\Ran {\mathbf P}_\ell^\perp.
$$
\end{lemma}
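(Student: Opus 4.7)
The plan is to verify the two quadratic form inequalities directly, by decomposing the form domain $W^1_2(\Omega)$ via $u=u_\parallel+u_\perp$ with $u_\parallel=\mathbf P_\ell u$ and $u_\perp=\mathbf P_\ell^\perp u$. Since $\Ran\mathbf P_\ell$ is spanned by finitely many smooth spherical harmonics, both pieces lie in $W^1_2(\Omega)$, and the form domains $\Dom h_\ell^\pm[\sigma]=\Ran\mathbf P_\ell\oplus(\Ran\mathbf P_\ell^\perp\cap W^1_2(\Omega))$ both coincide with $\Dom h[\sigma]=W^1_2(\Omega)$, which handles the domain inclusion demanded by the form ordering.

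The first step is to split the Dirichlet integral. Because $u_\parallel$ is a degree-$\ell$ spherical harmonic satisfying the Neumann boundary condition on the equator, Green's identity gives
\[
\int_\Omega\nabla u_\parallel\cdot\overline{\nabla u_\perp}\,dx=\ell(\ell+1)\int_\Omega u_\parallel\overline{u_\perp}\,dx=0,
\]
so $h[0](u)=\ell(\ell+1)\norm{u_\parallel}_{L^2(\Omega)}^2+h[0](u_\perp)$; this isolates the diagonal ``parallel'' contribution $\ell(\ell+1)\norm{u_\parallel}^2$ from the ``perpendicular'' Dirichlet form $h_\ell^\perp[0](u_\perp)$.

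The substance of the argument is handling the boundary term. Writing $u^b:=u(\pi/2,\cdot)$ for the (continuous) equator trace and expanding $\abs{u^b}^2=\abs{u_\parallel^b}^2+\abs{u_\perp^b}^2+2\Re(u_\parallel^b\overline{u_\perp^b})$, I would apply the weighted Young inequality
\[
\bigl|2\sigma(\varphi)\Re\bigl(u_\parallel^b(\varphi)\overline{u_\perp^b(\varphi)}\bigr)\bigr|\le\eps\abs{\sigma(\varphi)}\abs{u_\parallel^b(\varphi)}^2+\tfrac1\eps\abs{\sigma(\varphi)}\abs{u_\perp^b(\varphi)}^2
\]
to bound the cross term. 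Bounding from above and integrating in $\varphi$ yields
\[
\int_{-\pi}^\pi\sigma\abs{u^b}^2 d\varphi\le\int_{-\pi}^\pi\sigma_+\abs{u_\parallel^b}^2 d\varphi+\int_{-\pi}^\pi\bigl(\sigma+\tfrac1\eps\abs{\sigma}\bigr)\abs{u_\perp^b}^2 d\varphi,
\]
and combining this with the splitting of $h[0]$ the right-hand side equals $\ell(\ell+1)\norm{u_\parallel}^2+v_\ell[\sigma_+](u_\parallel)+h_\ell^\perp[\sigma+\tfrac1\eps\abs{\sigma}](u_\perp)$, which is exactly the quadratic form of $H_\ell^+[\sigma]$ evaluated at $u$. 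Bounding the cross term from below by the same Young inequality symmetrically produces $\sigma_-$ and $\sigma-\tfrac1\eps\abs{\sigma}$ in place of $\sigma_+$ and $\sigma+\tfrac1\eps\abs{\sigma}$, yielding the form of $H_\ell^-[\sigma]$ as a lower bound for $h[\sigma](u)$.

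The main obstacle is purely bookkeeping: arranging the weights in the Young inequality so that the small parameter $\eps$ lands on the finite-rank parallel block (producing the coefficient $\sigma_\pm=\sigma\pm\eps\abs{\sigma}$ in $V_\ell$) while $\eps^{-1}$ lands on the complementary block (producing the coefficient $\sigma\pm\tfrac1\eps\abs{\sigma}$ in $H_\ell^\perp$), and checking that the form domains coincide. No new analytic input beyond these routine manipulations is required.
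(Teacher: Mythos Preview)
Your proposal is correct and follows essentially the same route as the paper: decompose $u=u_\parallel+u_\perp$, split $h[0]$ diagonally, then control the boundary cross term by the weighted Young inequality $2\abs{ab}\le\eps\abs{a}^2+\eps^{-1}\abs{b}^2$ with the weights placed exactly as you describe. The only cosmetic difference is that you justify the vanishing of the Dirichlet cross term via Green's identity, while the paper simply invokes that $H[0]$ is diagonal with respect to its own spectral decomposition; both arguments are equivalent here.
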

\begin{proof}

For a fixed $\ell$ and for $u\in W^1_2(\Omega)$, we write
$$
u=u_\ell+u_\ell^\perp, \quad u_\ell={\mathbf P}_\ell u, \quad u_\ell^\perp={\mathbf P}_\ell^\perp u.
$$
Let us expand the quadratic form
$$
h[\sigma](u_\ell+u_\ell^\perp)=h[0](u_\ell+u_\ell^\perp)+\int_{-\pi}^\pi\sigma(\varphi)\abs{u_\ell(\pi/2,\varphi)+u_\ell^\perp(\pi/2,\varphi)}^2d\varphi.
$$
We observe that the Neumann Laplacian is diagonalised by the decomposition $u=u_\ell+u_\ell^\perp$, i.e.
$$
h[0](u_\ell+u_\ell^\perp)=h[0](u_\ell)+h[0](u_\ell^\perp).
$$
Since $u_\ell$ is in the eigenspace $\Ran {\mathbf P}_\ell$ of $H[0]$, corresponding to the eigenvalue $\ell(\ell+1)$, we have
$$
h[0](u_\ell)=\ell(\ell+1)\norm{u_\ell}^2_{L^2(\Omega)}.
$$
Using our notation \eqref{d1}, we can write
$$
h[0](u_\ell^\perp)
=
h_\ell^\perp[0](u_\ell^\perp).
$$
Thus,
$$
h[0](u_\ell+u_\ell^\perp)
=
\ell(\ell+1)\norm{u_\ell}^2
+
h_\ell^\perp[0](u_\ell^\perp).
$$

Now consider the integral over the equator:
\begin{align*}
\int_{-\pi}^\pi\sigma(\varphi)\abs{u_\ell(\pi/2,\varphi)+&u_\ell^\perp(\pi/2,\varphi)}^2d\varphi
\\
=&
\int_{-\pi}^\pi\sigma(\varphi)\abs{u_\ell(\pi/2,\varphi)}^2d\varphi
+
\int_{-\pi}^\pi\sigma(\varphi)\abs{u_\ell^\perp(\pi/2,\varphi)}^2d\varphi
\\
&+
2\Re \int_{-\pi}^\pi\sigma(\varphi)u_\ell(\pi/2,\varphi)\overline{u_\ell^\perp(\pi/2,\varphi)}  d\varphi.
\end{align*}
We use the estimate
$$
2\abs{ab}\leq \eps\abs{a}^2+\frac1{\eps}\abs{b}^2
$$
for the cross term in the last expression (with $a=u_\ell$ and $b=u_\ell^\perp$); this yields
\begin{align*}
\int_{-\pi}^\pi&\sigma(\varphi)\abs{u(\pi/2,\varphi)}^2d\varphi
\\
&\leq
\int_{-\pi}^\pi(\sigma(\varphi)+\eps\abs{\sigma(\varphi)})\abs{u_\ell(\pi/2,\varphi)}^2d\varphi
+
\int_{-\pi}^\pi(\sigma(\varphi)+\frac1\eps\abs{\sigma(\varphi)})\abs{u_\ell^\perp(\pi/2,\varphi)}^2d\varphi
\end{align*}
and similarly
\begin{align*}
\int_{-\pi}^\pi&\sigma(\varphi)\abs{u(\pi/2,\varphi)}^2d\varphi
\\
&\geq
\int_{-\pi}^\pi(\sigma(\varphi)-\eps\abs{\sigma(\varphi)})\abs{u_\ell(\pi/2,\varphi)}^2d\varphi
+
\int_{-\pi}^\pi(\sigma(\varphi)-\frac1\eps\abs{\sigma(\varphi)})\abs{u_\ell^\perp(\pi/2,\varphi)}^2d\varphi.
\end{align*}
Putting this all together and recalling our notation $v_\ell[\sigma]$ in \eqref{b8}, we find
\begin{align}
h[\sigma](u)&\leq
\ell(\ell+1)\norm{u_\ell}^2_{L^2(\Omega)}
+v_\ell[\sigma_+](u_\ell)+h_\ell^\perp[\sigma+\tfrac1\eps\abs{\sigma}](u_\ell^\perp),
\label{d2}
\\
h[\sigma](u)&\geq
\ell(\ell+1)\norm{u_\ell}^2_{L^2(\Omega)}
+v_\ell[\sigma_-](u_\ell)+h_\ell^\perp[\sigma-\tfrac1\eps\abs{\sigma}](u_\ell^\perp).
\label{d3}
\end{align}
Here \eqref{d2} can be rewritten as $H[\sigma]\leq H_\ell^+[\sigma]$, and \eqref{d3} as $H_\ell^-[\sigma]\leq H[\sigma]$.

We note that in this case the domains of the quadratic forms corresponding to all three operators ($H_\ell^-[\sigma]$, $H[\sigma]$ and $H_\ell^+[\sigma]$) coincide.
The proof of the lemma is complete.
\end{proof}

\subsection{Proof of Theorem~\ref{thm.d2}}
Combining Lemma~\ref{lma.var} with the variational principle (see Section~\ref{sec.var}), we find
\begin{equation}
\lambda_n(H_\ell^-[\sigma])\leq \lambda_n(H[\sigma])\leq \lambda_n(H_\ell^+[\sigma])
\label{d4}
\end{equation}
for all indices $n$.
Consider the upper bound here; recall that
$$
H_\ell^+[\sigma]=\biggl(\ell(\ell+1)I+V_\ell[\sigma_+]\biggr)\oplus H_\ell^\perp[\sigma+\tfrac1\eps\abs{\sigma}].
$$
The sequence of the eigenvalues of $H_\ell^+[\sigma]$ is the re-ordered union of the sequences of eigenvalues of the two components in this orthogonal sum.
By  Lemma~\ref{lma.d3}, the second component $H_\ell^\perp[\sigma+\tfrac1\eps\abs{\sigma}]$ does not contribute any eigenvalues to the interval $[\ell^2,(\ell+1)^2]$ and has $L=\ell(\ell+1)/2$ eigenvalues below this interval.
The first component has exactly $\ell+1$ eigenvalues given by
$$
\ell(\ell+1)+\lambda_k(V_\ell[\sigma_+]), \quad k=1,\dots, \ell+1.
$$
By Lemma~\ref{lma.b1}, all these eigenvalues are in the interval $\Lambda_\ell$.

It follows that for the eigenvalues of $H_\ell^+[\sigma]$ in the interval $[\ell^2,(\ell+1)^2]$ we have
$$
\lambda_{L+k}(H_\ell^+[\sigma])
=
\ell(\ell+1)+\lambda_k(V_\ell[\sigma_+]), \quad k=1,\dots,\ell+1.
$$
Combining this with the upper bound in \eqref{d4}, we obtain the upper bound in the statement of the theorem. The lower bound is obtained in the same way.
\qed

\subsection{Proof of Theorem~\ref{thm:a1}}
\label{sec:proof Lemma a1}
By Theorem~\ref{thm.d2} and Lemma~\ref{lma.b1}, the eigenvalues $\lambda_{L+k}(H[\sigma])$, $k=1,\dots,\ell+1$, belong to the interval $\Lambda_\ell$. This yields the required statement.
\qed

\section{Odd trigonometric polynomials: proof of Theorem~\ref{thm.a3}}
\label{sec:odd sigma}

\subsection{Notation and setup}

We denote by $\calH_\ell\subset L^2(\Omega)$ the linear space of spherical harmonics of degree $\ell$, i.e.
$$
\calH_\ell=\Span\{Y_\ell^m: -\ell\leq m\leq \ell\}.
$$
We decompose $\calH_\ell$ according to the parity of $m-\ell$:
$$
\calH_\ell=\calH_\ell^N\oplus\calH_\ell^D,
$$
where
\begin{align*}
\calH_\ell^N
&=
\Span\{Y_\ell^m: -\ell\leq m\leq \ell,\quad m-\ell\text{ even}\},
\\
\calH_\ell^D
&=
\Span\{Y_\ell^m: -\ell\leq m\leq \ell,\quad m-\ell\text{ odd}\}.
\end{align*}
Equivalently, $\calH_\ell^N$ (resp. $\calH_\ell^D$) is the subspace of functions satisfying the Neumann (resp. Dirichlet) boundary condition on the equator.

Further, we consider the corresponding subspaces of $L^2(\partial\Omega)$:
\begin{align*}
\wt\calH_\ell^N
&=
\Span\{e^{im\varphi}: -\ell\leq m\leq \ell,\quad m-\ell\text{ even}\},
\\
\wt\calH_\ell^D
&=
\Span\{e^{im\varphi}: -\ell\leq m\leq \ell,\quad m-\ell\text{ odd}\}.
\end{align*}

%%%%%%%%%%%%%%%%%%
\begin{lemma}\label{lma.f1}
%%%%%%%%%%%%%%%%%%
\begin{enumerate}[\rm (i)]
\item
For any $\ell$, the restriction map $F\mapsto F|_{\partial\Omega}$ is a linear isomorphism between $\calH_\ell^N$ and $\wt\calH_\ell^N$.
\item
For any $\ell$, the map $F\mapsto \frac{\partial F}{\partial n}|_{\partial\Omega}$ is a linear isomorphism between $\calH_\ell^D$ and $\wt\calH_\ell^D$.
\end{enumerate}
\end{lemma}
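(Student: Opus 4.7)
The approach is to reduce each claim to a dimension count plus a verification that the map acts diagonally (in the natural basis of spherical harmonics) with nonzero diagonal entries. Both $\calH_\ell^N$ and $\wt\calH_\ell^N$ are $(\ell+1)$-dimensional, indexed by $\{m:\abs{m}\leq\ell,\ m-\ell\text{ even}\}$, and both $\calH_\ell^D$ and $\wt\calH_\ell^D$ are $\ell$-dimensional, indexed by $\{m:\abs{m}\leq\ell,\ m-\ell\text{ odd}\}$. Hence for each map it suffices to establish injectivity, and since both maps commute with the azimuthal rotations (so that the basis element $Y_\ell^m$ is sent to a scalar multiple of $e^{im\varphi}$), injectivity reduces to checking that each scalar is nonzero.

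For part (i), the formula \eqref{eq:restr equator Alm1} gives directly
\[
Y_\ell^m(\pi/2,\varphi)=(-1)^{(\ell+m)/2}\frac{A_{\ell,m}}{\sqrt{2\pi}}e^{im\varphi},
\]
and the explicit expression for $A_{\ell,m}$ derived in Section~\ref{sec.b} (used in the proof of Lemma~\ref{lma.b2}) shows $A_{\ell,m}>0$ whenever $m-\ell$ is even and $\abs{m}\leq\ell$. Thus each basis vector $Y_\ell^m\in\calH_\ell^N$ restricts to a nonzero multiple of $e^{im\varphi}$, proving the isomorphism.

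For part (ii), differentiating \eqref{b11} in $\theta$ and evaluating at $\theta=\pi/2$ yields
\[
\frac{\partial Y_\ell^m}{\partial\theta}\bigg|_{\theta=\pi/2}
=-\sqrt{\frac{2\ell+1}{2\pi}\frac{(\ell-m)!}{(\ell+m)!}}\,(P_\ell^m)'(0)\,e^{im\varphi},
\]
so the claim reduces to showing $(P_\ell^m)'(0)\neq 0$ for $m-\ell$ odd; this is the only genuine obstacle. The cleanest argument is to note that $P_\ell^m$ solves the associated Legendre equation
\[
(1-x^2)y''-2xy'+\Bigl[\ell(\ell+1)-\frac{m^2}{1-x^2}\Bigr]y=0,
\]
which is a regular linear second-order ODE at $x=0$; hence its solutions are determined by the pair $(y(0),y'(0))$, and a simultaneous vanishing of both would force $P_\ell^m\equiv 0$. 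Since the standard parity relation $P_\ell^m(-x)=(-1)^{\ell+m}P_\ell^m(x)$ gives $P_\ell^m(0)=0$ when $\ell-m$ is odd, we conclude $(P_\ell^m)'(0)\neq 0$, which completes the proof. An alternative to the ODE argument is to use the Rodrigues-type formula $P_\ell^m(x)=(-1)^m(1-x^2)^{m/2}P_\ell^{(m)}(x)$ (for $m\geq 0$) and the fact that $P_\ell^{(m+1)}(0)$ is the value of a nontrivial even polynomial of degree $\ell-m-1$ in a variable whose parity makes it manifestly nonzero; this requires a bit of case analysis but yields an explicit constant.
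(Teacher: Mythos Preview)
Your proof is correct. Part (i) is identical to the paper's argument: both observe that the restriction sends $Y_\ell^m$ to a nonzero multiple of $e^{im\varphi}$ because $A_{\ell,m}>0$.

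For part (ii) the approaches diverge. The paper computes $(P_\ell^m)'(0)$ explicitly via a closed-form identity (cited from \cite{RW}):
\[
\frac{d}{dx}P_\ell^m(x)\Big|_{x=0}
=\frac{2^{m+1}}{\sqrt{\pi}}\sin\Bigl(\tfrac{\pi(m+\ell)}{2}\Bigr)\frac{\Gamma\bigl(\tfrac{\ell+m}{2}+1\bigr)}{\Gamma\bigl(\tfrac{\ell-m+1}{2}\bigr)},
\]
and then reads off nonvanishing when $\ell-m$ is odd. Your route is more conceptual: you use that $P_\ell^m$ is a nontrivial solution of the associated Legendre equation, which is a regular second-order linear ODE at $x=0$, so $(P_\ell^m(0),(P_\ell^m)'(0))$ cannot both vanish; combined with the parity relation $P_\ell^m(-x)=(-1)^{\ell+m}P_\ell^m(x)$ forcing $P_\ell^m(0)=0$, this gives $(P_\ell^m)'(0)\neq0$ immediately. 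Your argument is self-contained and avoids the external citation; the paper's formula, on the other hand, yields the explicit value of the constant $B_{\ell,m}$, which is not needed for the lemma but could be useful for finer estimates. The Rodrigues-formula alternative you sketch at the end is unnecessary given the clean ODE argument.
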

\begin{proof}
(i) We have
\begin{align*}
\{Y_\ell^m: \quad -\ell\leq m\leq \ell,\quad m-\ell\text{ even}\}\quad \text{is a linear basis in $\calH_\ell^N$},
\\
\{e^{im\varphi}:\quad -\ell\leq m\leq \ell,\quad m-\ell\text{ even}\}\quad \text{is a linear basis in $\wt\calH_\ell^N$}.
\end{align*}
By \eqref{b11} and \eqref{eq:restr equator Alm2}, we have
$$
Y_\ell^m|_{\partial\Omega}=Y_\ell^m(\pi/2,\varphi)=(-1)^{\frac{\ell+m}{2}}
\frac{A_{\ell,m}}{\sqrt{2\pi}}e^{im\varphi},
\quad
A_{\ell,m}\not=0,
$$
and so the restriction map relates these bases through multiplication by non-zero constants.
This completes the proof of (i).

(ii) Similarly,
\begin{align*}
\{Y_\ell^m: \quad -\ell\leq m\leq \ell,\quad m-\ell\text{ odd}\}\quad \text{is a linear basis in $\calH_\ell^D$},
\\
\{e^{im\varphi}:\quad -\ell\leq m\leq \ell,\quad m-\ell\text{ odd}\}\quad \text{is a linear basis in $\wt\calH_\ell^D$}.
\end{align*}
We have
$$
\frac{\partial Y_{\ell}^{m}}{\partial n}\bigg|_{\partial\Omega} = \frac{\partial Y_{\ell}^{m}}{\partial \theta} (\pi/2,\varphi) = B_{\ell,m}\cdot e^{im \varphi},
$$
with
$$
B_{\ell,m}:=-
\sqrt{\frac{(2\ell+1)}{2\pi}\frac{(\ell-m)!}{(\ell+m)!}}\frac{d}{dx}{P_{\ell}^{m}}(x)|_{x=0},
$$
that, we claim, does not vanish. Indeed, upon using the assumption that $\ell-m$ is odd, we explicitly have  ~\cite[p. 128]{RW} that
\begin{equation*}
\frac{d}{dx}{P_{\ell}^{m}}(x)|_{x=0} = \frac{2^{m+1}}{\sqrt{\pi}}\sin\left(\frac{\pi(m+\ell)}{2}\right)\frac{\Gamma\left( \frac{\ell+m}{2}+1 \right)}{\Gamma\left( \frac{\ell-m+1}{2} \right)} =
\pm\frac{2^{m+1}}{\sqrt{\pi}}\frac{\Gamma\left( \frac{\ell+m}{2}+1 \right)}{\Gamma\left( \frac{\ell-m+1}{2} \right)} \ne 0.
\end{equation*}
This completes the proof of (ii).
\end{proof}

\subsection{Proof of Theorem~\ref{thm.a3}}
Let $\sigma$ be as in the statement of the theorem, see \eqref{eq:def-odd-sigma}.
Recall that we denoted by $M[\sigma]$ the operator of multiplication by $\sigma$ in $L^2(\Omega)$.
Let us consider the action of  $M[\sigma]$ on functions from the space $\wt\calH_\ell^N$. Since $\sigma$ is odd, multiplication by $\sigma$ reverses the parity of functions on the boundary. It follows that $M[\sigma]$ maps a suitable subspace of $\wt\calH_\ell^N$ to $\wt\calH_\ell^D$. By ``suitable subspace'' here we mean restricting the degree of $f\in \wt\calH_\ell^N$ so that the product $\sigma f$ has degree $\leq \ell$. More precisely, we see that (using that $d$ is odd)
\begin{equation}
M[\sigma]: \wt\calH_{\ell-d-1}^N\to \wt\calH_\ell^D.
\label{f2}
\end{equation}

Motivated by this, let us consider the subspace $\calH_\ell^{N,-}\subset\calH_\ell^N$ defined by the condition
$$
\calH_\ell^{N,-}=\{F\in \calH_\ell^N: F|_{\partial\Omega}\in\wt\calH_{\ell-d-1}^N\}.
$$
Take any $F_N\in \calH_\ell^{N,-}$ and consider its restriction $f_N=F_N|_{\partial\Omega}$.
Then by \eqref{f2} we have $\sigma f_N\in \wt\calH_\ell^D$. By Lemma~\ref{lma.f1}(ii), there exists $F_D\in \calH_\ell^D$ such that
$$
\frac{\partial F_D}{\partial n}\bigg|_{\partial\Omega}=-\sigma f_N.
$$
Consider the function $F:=F_N+F_D\in\calH_\ell$.
Since $F_N$ (resp. $F_D$) satisfies the Neumann (resp. Dirichlet) boundary condition on the equator, we find
\begin{equation*}
\sigma \cdot F|_{\partial\Omega}+\frac{\partial F}{\partial n}\bigg|_{\partial\Omega}
=
\sigma \cdot F_{N}|_{\partial\Omega}+\frac{\partial F_D}{\partial n}\bigg|_{\partial\Omega}
=\sigma f_N-\sigma f_N=0
\end{equation*}
by construction. Thus, $F$ is a Robin eigenfunction with the eigenvalue $\ell(\ell+1)$.

It remains to estimate from below the dimension of the corresponding eigenspace. By inspection, the codimension of $\wt\calH_{\ell-d-1}^N$ in $\wt\calH_\ell^N$ is $d+1$. By Lemma~\ref{lma.f1}(i), the codimension of $\calH_\ell^{N,-}$ in $\calH_\ell^{N}$ is also $d+1$. Since $\dim \calH_\ell^{N}=\ell+1$, we find that $\dim\calH_\ell^{N,-}=\ell-d$. Thus, the multiplicity of the Robin eigenvalue $\lambda=\ell(\ell+1)$ is at least $\ell-d$.  It follows that all but $d+1$ eigenvalues in the $\ell$'th cluster (for large $\ell$) coincide with $\ell(\ell+1)$.
The proof of Theorem~\ref{thm.a3} is complete.

\appendix

\section{Proof of Proposition~\ref{prp.b4}}

Below $\omega$ and $\sigma$ are as in the hypothesis of Proposition~\ref{prp.b4}.
In order to make the formulas below more readable, we assume without loss of generality that $\sigma$ is even, $\sigma(\varphi+\pi)=\sigma(\varphi)$, so that $\sigma_\text{\rm even}=\sigma$.

\subsection{A commutator estimate}
\begin{lemma}
We have the Hilbert-Schmidt norm estimate
\begin{equation}
\norm{M[\sigma]C[\omega_\ell]-C[\omega_\ell]M[\sigma]}_{\Sch_2}=O(1), \quad \ell\to\infty.
\label{A1}
\end{equation}
\end{lemma}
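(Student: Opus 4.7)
The plan is to compute the commutator's Hilbert-Schmidt norm explicitly in the Fourier basis $e_m(\varphi)=(2\pi)^{-1/2}e^{im\varphi}$ of $L^2(\partial\Omega)$. Setting $\hat\omega_\ell(m):=\omega(m/\ell)$ when $|m|\le\ell-2$ and $m-\ell$ is even, and $\hat\omega_\ell(m):=0$ otherwise, we have $C[\omega_\ell]e_m=\hat\omega_\ell(m)e_m$. Writing $\sigma(\varphi)=\sum_k\hat\sigma_k e^{ik\varphi}$, a short calculation based on $\langle M[\sigma]e_m,e_n\rangle=\hat\sigma_{n-m}$ yields
$$\langle\bigl(M[\sigma]C[\omega_\ell]-C[\omega_\ell]M[\sigma]\bigr)e_m,e_n\rangle=\hat\sigma_{n-m}\bigl(\hat\omega_\ell(m)-\hat\omega_\ell(n)\bigr),$$
so that the squared Hilbert-Schmidt norm equals $\sum_k|\hat\sigma_k|^2\,S_k(\ell)$, where
$$S_k(\ell):=\sum_m\bigl|\hat\omega_\ell(m)-\hat\omega_\ell(m+k)\bigr|^2.$$

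The crucial input is the standing assumption of this appendix that $\sigma=\sigma_{\mathrm{even}}$, i.e.\ $\hat\sigma_k=0$ for odd $k$. This matters decisively: for odd $k$ the indices $m$ and $m+k$ lie in opposite parity classes relative to $\ell$, so whenever $\hat\omega_\ell(m)\ne 0$ one has $\hat\omega_\ell(m+k)=0$ for a purely parity-theoretic reason, and the resulting mismatch terms would contribute $\sum_m|\omega(m/\ell)|^2=O(\ell)$ to $S_k(\ell)$. Evenness of $\sigma$ kills these terms before they appear, so I may restrict the analysis to even $k$, for which $m$ and $m+k$ share parity.

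For even $k$, Taylor's theorem and the smoothness of $\omega$ give $|\omega(m/\ell)-\omega((m+k)/\ell)|\le C|k|/\ell$, uniformly on the at most $O(\ell)$ values of $m$ for which either $\omega(m/\ell)$ or $\omega((m+k)/\ell)$ is nonzero. Here the compact support $\omega\in C_0^\infty(-1,1)$ is essential: together with $|m|\le\ell-2$ it ensures that the remaining ``index-range boundary'' mismatches occur only where $\omega$ already vanishes. This yields $S_k(\ell)\le Ck^2/\ell$, and a trivial triangle-inequality bound gives $S_k(\ell)\le 4\sum_m|\hat\omega_\ell(m)|^2=O(\ell)$ always, so uniformly $S_k(\ell)\le C\min(k^2/\ell,\ell)$. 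Splitting the outer sum at $|k|=\ell$ and invoking $\sum_k k^2|\hat\sigma_k|^2=\|\sigma'\|_{L^2(\partial\Omega)}^2<\infty$ (which follows from $\sigma\in C^1$), both pieces become $O(1/\ell)$, whence $\norm{[M[\sigma],C[\omega_\ell]]}_{\Sch_2}^2=O(1/\ell)$ — a bound actually stronger than the claimed $O(1)$.

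The only delicate step is the interior/boundary bookkeeping in the estimate of $S_k(\ell)$: for admissible $m$ one must check that either $m+k$ is also admissible (so the Taylor estimate applies with the small factor $|k|/\ell$) or else compact support of $\omega$ forces the relevant value of $\omega$ to vanish. Without either hypothesis — evenness of $\sigma$ or compact support of $\omega$ away from $\pm 1$ — a single surviving ``boundary/parity'' $S_k(\ell)$ would be of size $\ell$ and the $O(1)$ bound would fail; with both, the estimate runs through cleanly.
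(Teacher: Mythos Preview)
Your proof is correct and follows essentially the same strategy as the paper's: both hinge on the evenness of $\sigma$ (to avoid parity mismatches between the Fourier supports) and the smoothness of $\omega$ (to make the shifted differences small), with the paper phrasing this via the pointwise kernel bound $|\sigma(\varphi)-\sigma(\varphi')|\le C|e^{2i(\varphi-\varphi')}-1|$ followed by Plancherel, while you work directly with the matrix elements in the Fourier basis. Your organization as $\sum_{k\text{ even}}|\hat\sigma_k|^2 S_k(\ell)$ with $S_k(\ell)\le Ck^2/\ell$ makes the sharper bound $O(1/\ell)$ explicit, which the paper's computation also delivers but records only as $O(1)$.
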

\begin{proof}
The commutator in \eqref{A1} is the integral operator in $L^2(\partial\Omega)$ with the integral kernel
$$
\omega_\ell(\varphi-\varphi')(\sigma(\varphi)-\sigma(\varphi')).
$$
Since $\sigma$ is smooth and \emph{even}, we can write
$$
\abs{\sigma(\varphi)-\sigma(\varphi')}
\leq
C\abs{e^{2i(\varphi-\varphi')}-1}.
$$
It follows that the Hilbert-Schmidt norm can be estimated as
\begin{align*}
\norm{M[\sigma]C[\omega_\ell]&-C[\omega_\ell]M[\sigma]}_{\Sch_2}^2
\\
&=
\int_{-\pi}^\pi \int_{-\pi}^\pi
\abs{\omega_\ell(\varphi-\varphi')}^2\abs{\sigma(\varphi)-\sigma(\varphi')}^2 d\varphi\ d\varphi'
\\
&\leq
C\int_{-\pi}^\pi \int_{-\pi}^\pi
\abs{\omega_\ell(\varphi-\varphi')}^2
\abs{e^{2i(\varphi-\varphi')}-1}^2
d\varphi\ d\varphi'
\\
&=
C\int_{-\pi}^\pi
\abs{\omega_\ell(\varphi)}^2
\abs{e^{2i\varphi}-1}^2d\varphi.
\end{align*}
By Plancherel's theorem, we have
\begin{align*}
\int_{-\pi}^\pi
\abs{\omega_\ell(\varphi)}^2
\abs{e^{2i\varphi}-1}^2d\varphi
&=
\int_{-\pi}^\pi
\abs{\omega_\ell(\varphi)e^{2i\varphi}-\omega_\ell(\varphi)}^2d\varphi
\\
&=
\sum_{\genfrac{}{}{0pt}{1}{m=-\infty}{\text{$m-\ell$ \rm even}}}^{\infty}
\abs{\omega(\tfrac{m}{\ell})-\omega(\tfrac{m+2}{\ell})}^2
=O(1)
\end{align*}
as $\ell\to\infty$, where we have used the smoothness of $\omega$ at the last step.
\end{proof}

\subsection{The case $f(x)=x^k$}
\begin{lemma}
For any integer $k\geq1$, we have
$$
\lim_{\ell\to\infty}
\frac{1}{\ell+1}\Tr \bigl((C[\omega_\ell]M[\sigma]C[\omega_\ell]^*)^k\bigr)
=
\frac1{4\pi}\int_{-\pi}^{\pi}
\int_{-1}^1 \abs{\omega(\xi)}^{2k}(\sigma(\varphi))^kd\xi\,  d\varphi.
$$
\end{lemma}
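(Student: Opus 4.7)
The approach is to reduce $\Tr T_\ell^k$, where $T_\ell := C[\omega_\ell]M[\sigma]C[\omega_\ell]^*$, to a trace in which all multiplication factors are gathered on one side and all convolution factors on the other, and then to evaluate the resulting trace directly from its integral kernel. Since $\omega$ is non-negative (hence real), the operator $C[\omega_\ell]$ is self-adjoint, so $C[\omega_\ell]^*=C[\omega_\ell]$, and cyclicity of the trace gives
\[
\Tr T_\ell^k=\Tr\bigl(M[\sigma]\,C[\omega_\ell]^2\bigr)^k.
\]
Moreover $C[\omega_\ell]^2=C[\eta_\ell]$, where $\eta_\ell$ is the trigonometric polynomial whose Fourier coefficients are $\omega(m/\ell)^2$ (for $m-\ell$ even, $\abs{m}\le\ell-2$), sampled from the smooth compactly supported function $\eta:=\omega^2\in C_0^\infty(-1,1)$. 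Consequently \eqref{A1}, applied with $\omega$ replaced by $\eta$, yields $\norm{[M[\sigma],C[\eta_\ell]]}_{\Sch_2}=O(1)$.

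I would then iteratively use the identity $C[\eta_\ell]M[\sigma]=M[\sigma]C[\eta_\ell]+[C[\eta_\ell],M[\sigma]]$ to move every $C[\eta_\ell]$ past each subsequent $M[\sigma]$, obtaining
\[
\bigl(M[\sigma]C[\eta_\ell]\bigr)^k = M[\sigma]^k\,C[\eta_\ell]^k+R_\ell,
\]
where $R_\ell$ is a finite sum of products, each containing at least one factor of the commutator $[C[\eta_\ell],M[\sigma]]$. Combining $\norm{C[\eta_\ell]}_{\Sch_2}=O(\sqrt\ell)$ (Riemann sum bound on $\sum_m\omega(m/\ell)^4$), $\norm{C[\eta_\ell]}=O(1)$, $\norm{M[\sigma]}=O(1)$, and the Schatten Cauchy--Schwarz inequality $\norm{AB}_{\Sch_1}\le\norm{A}_{\Sch_2}\norm{B}_{\Sch_2}$, one shows that every summand in $R_\ell$ has trace norm $O(\sqrt\ell)$; hence $\norm{R_\ell}_{\Sch_1}=O(\sqrt\ell)$ and $\abs{\Tr R_\ell}/(\ell+1)\to0$.

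The remaining trace is then computed explicitly. The operator $C[\eta_\ell]^k$ is a convolution by the trigonometric polynomial with Fourier coefficients $\omega(m/\ell)^{2k}$, so evaluating the integral kernel of $M[\sigma^k]C[\eta_\ell]^k$ on the diagonal gives
\[
\Tr\bigl(M[\sigma]^k\,C[\eta_\ell]^k\bigr) = \frac{1}{2\pi}\int_{-\pi}^{\pi}\sigma(\varphi)^k\,d\varphi\cdot\sum_{\substack{\abs{m}\le\ell-2\\ m-\ell\ \mathrm{even}}}\omega(m/\ell)^{2k}.
\]
The inner sum is a Riemann sum with step $2/\ell$ for the smooth, compactly supported function $\omega^{2k}$, so by any elementary approximation (or Poisson summation for a much sharper estimate) it equals $(\ell/2)\int_{-1}^{1}\omega(\xi)^{2k}\,d\xi+o(\ell)$. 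Dividing by $\ell+1$ and letting $\ell\to\infty$ produces the claimed limit $\frac{1}{4\pi}\int_{-\pi}^{\pi}\int_{-1}^{1}\abs{\omega(\xi)}^{2k}\sigma(\varphi)^k\,d\xi\,d\varphi$.

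The main obstacle will be the combinatorial bookkeeping in the sorting step: one must verify, term by term, that every commutator remainder in $R_\ell$ is trace class of norm $O(\sqrt\ell)$. The essential point is that $[C[\eta_\ell],M[\sigma]]$ is only Hilbert--Schmidt (not trace class), with norm $O(1)$, so the Schatten Cauchy--Schwarz must pair this commutator with exactly one additional Hilbert--Schmidt factor of norm $O(\sqrt\ell)$, all remaining factors contributing only through their $O(1)$ operator norms. This is where the smoothness and compact support of $\omega$ (ensuring \eqref{A1} holds for $\omega$ and for all its powers) becomes essential.
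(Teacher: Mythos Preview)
Your proposal is correct and follows essentially the same route as the paper: cyclicity of trace to pass to $\Tr\bigl(M[\sigma]C[\eta_\ell]\bigr)^k$ with $\eta_\ell$ the sampled square (the paper calls it $\Omega_\ell$), then commuting to reach $\Tr\bigl(M[\sigma]^k C[\eta_\ell]^k\bigr)$ plus trace-class errors of order $O(\sqrt{\ell})$ via the $\Sch_2\times\Sch_2$ bound from \eqref{A1}, and finally a direct Riemann-sum evaluation of the main term. The only cosmetic difference is that you invoke self-adjointness of $C[\omega_\ell]$ (since $\omega$ is real) rather than writing $C[\omega_\ell]^*C[\omega_\ell]$, which amounts to the same thing.
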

\begin{proof}
We observe that by the cyclicity of trace,
$$
\Tr \bigl((C[\omega_\ell]M[\sigma]C[\omega_\ell]^*)^k\bigr)
=
\Tr \bigl((M[\sigma]C[\omega_\ell]^*C[\omega_\ell])^k\bigr)
$$
and $C[\omega_\ell]^*C[\omega_\ell]=C[\Omega_\ell]$, where
$$
\Omega_\ell(\varphi)
=
\sum_{\genfrac{}{}{0pt}{1}{m=-\ell+2}{\text{$m-\ell$ \rm even}}}^{\ell-2}
\abs{\omega(m/\ell)}^2e^{im\varphi}.
$$
Further, for $k=1$ (as in the second part of the proof of Theorem~\ref{thm.b1}, see Section~\ref{sec.3.10}) by a direct evaluation of trace we have
\begin{align*}
\Tr (M[\sigma]C[\Omega_\ell])
&=
\frac1{2\pi}\int_{-\pi}^\pi \sigma(\varphi)d\varphi
\sum_{\genfrac{}{}{0pt}{1}{m=-\ell+2}{\text{$m-\ell$ \rm even}}}^{\ell-2}
\abs{\omega(m/\ell)}^2
\\
&=\frac{1}{2\pi}\int_{-\pi}^\pi \sigma(\varphi)d\varphi\cdot
\frac{\ell}{2}\int_{-1}^1\abs{\omega(\xi)}^2d\xi+O(1),
\end{align*}
for $\ell\to\infty$.
If $k=2$, denoting the commutator $R=\bigl[C[\Omega_\ell],M[\sigma]\bigr]$,  we find
\begin{align*}
\Tr \bigl((M[\sigma]C[\Omega_\ell])^2\bigr)
=&
\Tr (M[\sigma]^2C[\Omega_\ell]^2)
+
\Tr(M[\sigma]R\ C[\Omega_\ell]).
\end{align*}
Here the first trace in the right hand side can be again directly evaluated:
\begin{align*}
\Tr (M[\sigma]^2C[\Omega_\ell]^2)
&=
\frac1{2\pi}\int_{-\pi}^\pi \sigma(\varphi)^2d\varphi
\sum_{\genfrac{}{}{0pt}{1}{m=-\ell+2}{\text{$m-\ell$ \rm even}}}^{\ell-2}
\abs{\omega(m/\ell)}^4
\\
&=\frac{1}{2\pi}\int_{-\pi}^\pi \sigma(\varphi)^2 d\varphi\cdot
\frac{\ell}{2}\int_{-1}^1\abs{\omega(\xi)}^{4}d\xi+O(1),
\end{align*}
and the second trace can be estimated as follows:
\begin{align*}
\abs{\Tr(M[\sigma]R\ C[\Omega_\ell])}
\leq
\norm{M[\sigma] R\ C[\Omega_\ell]}_{\Sch_1}
&\leq
\norm{M[\sigma]}
\norm{R}_{\Sch_2}
\norm{C[\Omega_\ell]}_{\Sch_2}
\\
&=O(1)O(1)O(\sqrt{\ell})=O(\sqrt{\ell}).
\end{align*}
Similarly for $k\geq3$ we find
$$
\Tr \bigl((M[\sigma]C[\Omega_\ell])^k\bigr)
=
\Tr (M[\sigma]^k C[\Omega_\ell]^k)
+
\text{error}
$$
where the trace in the right hand side can be directly evaluated and the error term is a combination of traces of commutator terms, which are all $O(\sqrt{\ell})$.
\end{proof}

\subsection{Concluding the proof: application of the Weierstrass approximation theorem}

For all $\ell$, we have the estimate
$$
\norm{C[\omega_\ell]M[\sigma]C[\omega_\ell]^*}\leq R, \qquad
R=\norm{\omega}^2_{L^\infty}\norm{\sigma}_{L^\infty}.
$$
It suffices to prove the statement for $f$ real-valued. We have already checked the statement for $f(x)=x$, and so, subtracting a linear term from $f$, we may assume that $f'(0)=0$.
Under this assumption, the function $f(y)/y^2$ is in $C^\infty$, and so by the Weierstrass approximation theorem for any $\eps>0$ we can find polynomials $p_+$ and $p_-$ vanishing at the origin such that
$$
p_-(y)\leq f(y)\leq p_+(y) \quad \text{ and }\quad
p_+(y)-p_-(y)\leq \eps y^2, \quad \text{ for all $\abs{y}\leq R$.}
$$
Then we have, using the previous lemma,
\begin{align*}
\limsup_{\ell\to\infty}
\frac{1}{\ell+1}\Tr f(C[\omega_\ell]M[\sigma]C[\omega_\ell]^*)
&\leq
\limsup_{\ell\to\infty}
\frac{1}{\ell+1}\Tr p_+(C[\omega_\ell]M[\sigma]C[\omega_\ell]^*)
\\
&=
\frac1{4\pi}\int_{-\pi}^{\pi}
\int_{-1}^1 p_+\bigl(\abs{\omega(\xi)}^2\sigma(\varphi)\bigr)d\xi\,  d\varphi
\end{align*}
and similarly
$$
\liminf_{\ell\to\infty}
\frac{1}{\ell+1}\Tr f(C[\omega_\ell]M[\sigma]C[\omega_\ell]^*)
\geq
\frac1{4\pi}\int_{-\pi}^{\pi}
\int_{-1}^1 p_-\bigl(\abs{\omega(\xi)}^2\sigma(\varphi)\bigr)d\xi\,  d\varphi.
$$
Furthermore, subtracting the right hand sides here, we find
\begin{align*}
\int_{-\pi}^{\pi}\int_{-1}^1 p_+\bigl(\abs{\omega(\xi)}^2\sigma(\varphi)\bigr)d\xi\,  d\varphi
&-
\int_{-\pi}^{\pi}\int_{-1}^1 p_-\bigl(\abs{\omega(\xi)}^2\sigma(\varphi)\bigr)d\xi\,  d\varphi
\\
&\leq
\eps
\int_{-\pi}^{\pi}\int_{-1}^1 \bigl(\abs{\omega(\xi)}^2\sigma(\varphi)\bigr)^2d\xi\,  d\varphi.
\end{align*}
Sending $\eps\to0$, we find that $\limsup$ and $\liminf$ above coincide, which gives the required statement.
\qed

\end{document}